\newcommand{\email}[1]{\href{mailto:#1}{#1}}
\numberwithin{equation}{section}
\newcommand{\Hdiv}[1][\Omega]{\vec{H}(\opdiv;#1)}
\newcommand{\Hrot}[1][\Omega]{\vec{H}(\oprot;#1)}
\newcommand{\Hcurl}[1][\Omega]{\vec{H}(\opcurl;#1)}
\newcommand{\lproj}[2][T]{\pi_{#1}^{#2}}
\newcommand{\vlproj}[2][T]{\vec{\pi}_{#1}^{#2}}
\newcommand{\cG}{\vec{\mathbb{G}}}
\newcommand{\coG}{\overline{\vec{\mathbb{G}}}}
\newcommand{\cGT}[1][l]{\cG^{#1}_T}
\newcommand{\coGT}[1][m]{\coG^{#1}_T}
\newcommand{\cS}{\vec{\mathbb{S}}}
\newcommand{\cST}[1][k,m]{\cS^{#1}_T}
\newcommand{\vsigma}{\vec{\sigma}}
\newcommand{\vtau}{\vec{\tau}}
\newcommand{\uvsigma}[1][T]{\underline{\vec{\sigma}}_{#1}}
\newcommand{\uvhsigma}[1][T]{\widehat{\underline{\vec{\sigma}}}_{#1}}
\newcommand{\uvtau}[1][T]{\underline{\vec{\tau}}_{#1}}
\newcommand{\vchi}{\vec{\chi}}
\newcommand{\uvchi}[1][T]{\underline{\vec{\chi}}_{#1}}
\newcommand{\uvhchi}[1][T]{\widehat{\underline{\vec{\chi}}}_{#1}}
\newcommand{\vSigma}{\vec{\Sigma}}
\newcommand{\uvSigmaT}[1][k,l,m]{\underline{\vSigma}_T^{#1}}
\newcommand{\uvIST}[1][k,l,m]{\underline{\vec{I}}_{\vSigma,T}^{#1}}
\newcommand{\uvISh}[1][k,l,m]{\underline{\vec{I}}_{\vSigma,h}^{#1}}
\newcommand{\uvcSigmah}[1][k,l,m]{\underline{\widecheck{\vSigma}}_h^{#1}}
\newcommand{\uvSigmah}[1][k,l,m]{\underline{\vSigma}_h^{#1}}
\newcommand{\uUTs}[1][k,l]{\underline{U}_{T,\asterisk}^{#1}}
\newcommand{\UT}[1][l]{U_T^{#1}}
\newcommand{\uUT}[1][k,l]{\underline{U}_T^{#1}}
\newcommand{\Uh}[1][l]{U_h^{#1}}
\newcommand{\uUh}[1][k,l]{\underline{U}_{h}^{#1}}
\newcommand{\uUhD}[1][k,l]{\underline{U}_{h,0}^{#1}}
\newcommand{\uIUT}[1][k,l]{\underline{I}_{U,T}^{#1}}
\newcommand{\uIUh}[1][k,l]{\underline{I}_{U,h}^{#1}}
\newcommand{\uu}[1][h]{\underline{u}_{#1}}
\newcommand{\uuh}[1][h]{\widehat{\underline{u}}_{#1}}
\newcommand{\hu}[1][h]{\widehat{u}_{#1}}
\newcommand{\uv}[1][h]{\underline{v}_{#1}}
\newcommand{\uw}[1][T]{\underline{w}_{#1}}
\newcommand{\cu}[1][T]{\widecheck{u}_{#1}}
\newcommand{\uhz}[1][h]{\widehat{\underline{z}}_{#1}}
\newcommand{\hz}[1][h]{\widehat{z}_{#1}}
\newcommand{\cz}[1][h]{\widecheck{z}_{#1}}
\newcommand{\vX}{\vec{X}}
\newcommand{\uvXh}[1][k,l,m]{\underline{\vec{X}}_h^{#1}}
\newcommand{\DT}[1][l]{\mathrm{D}_T^{#1}}
\newcommand{\Dh}[1][l]{\mathrm{D}_h^{#1}}
\newcommand{\PT}[1][k]{\vec{\mathrm{P}}_T^{#1}}
\newcommand{\ST}[1][k]{\vec{\mathrm{S}}_{T}^{#1}}
\newcommand{\vsT}[1][k,l,m]{\underline{\vec{\varsigma}}_{T}^{#1}}
\newcommand{\vsh}[1][k,l,m]{\underline{\vec{\varsigma}}_{h}^{#1}}
\newcommand{\GT}[1][k]{\vec{\mathrm{G}}_T^{#1}}
\newcommand{\pT}[1][k+1]{\mathrm{p}_T^{#1}}
\newcommand{\sprime}{\textsuperscript{\ensuremath{\prime}}}
\newcommand{\PTF}[1][TF]{\mathcal{P}_{#1}}
\newcommand{\vfS}[1]{\vec{\mathfrak{S}}^{#1}}
\newcommand{\deTF}[1][k]{\delta_{TF}^{#1}}
\newcommand{\RT}[1][k]{\mathbb{RT}^{#1}}
\newcommand{\NC}{\mathbb{NC}}
\title{Unified formulation and analysis of mixed and primal discontinuous skeletal methods on polytopal meshes\footnote{The work of D. Boffi was partially supported by PRIN/MIUR, by GNCS/INDAM,
and by IMATI/CNR. The work of D. A. Di Pietro was partially supported by Agence Nationale de la Recherche project HHOMM (ANR-15-CE40-0005).}}
\author[1]{Daniele Boffi\footnote{\email{daniele.boffi@unipv.it}}}
\affil[1]{Universit\`{a} degli Studi di Pavia, Dipartimento di Matematica ``Felice Casorati'', 27100 Pavia, Italy}
\author[2]{Daniele A. Di Pietro\footnote{\email{daniele.di-pietro@umontpellier.fr}}}
\affil[2]{Universit\'{e} de Montpellier, Institut Montpelli\'{e}rain Alexander Grothendieck, 34095 Montpellier, France}
\begin{document}

\maketitle

\begin{abstract}
  We propose in this work a unified formulation of mixed and primal discretization methods on polyhedral meshes hinging on globally coupled degrees of freedom that are discontinuous polynomials on the mesh skeleton. To emphasize this feature, these methods are referred to here as discontinuous skeletal.
  As a starting point, we define two families of discretizations corresponding, respectively, to mixed and primal formulations of discontinuous skeletal methods.
  Each family is uniquely identified by prescribing three polynomial degrees defining the degrees of freedom and a stabilization bilinear form which has to satisfy two properties of simple verification: stability and polynomial consistency.
  Several examples of methods available in the recent literature are shown to belong to either one of those families.
  We then prove new equivalence results that build a bridge between the two families of methods.
  Precisely, we show that for any mixed method there exists a corresponding equivalent primal method, and the converse is true provided that the gradients are approximated in suitable spaces.
  A unified convergence analysis is also carried out delivering optimal error estimates in both energy- and $L^2$-norm.
  \medskip \\
  \noindent\emph{2010 Mathematics Subject Classification:} 65N08, 65N30, 65N12
  \smallskip\\
  \noindent\emph{Keywords:} Polyhedral meshes; hybrid high-order methods; virtual element methods; mixed and hybrid finite volume methods; mimetic finite difference methods
\end{abstract}



\section{Introduction}

Over the last few years, discretization methods that support general polytopal meshes have received a great amount of attention.
Such methods are often formulated in terms of two sets of degrees of freedom (DOFs) located inside mesh elements and on the mesh skeleton, respectively.
The former can often be eliminated (possibly after hybridization) by static condensation, whereas the latter are responsible for the transmission of information among elements, and are therefore globally coupled.
To emphasize the role of the second set of DOFs, these methods are referred to here as ``skeletal''.
Skeletal methods can be classified according to the continuity property of skeletal DOFs on the mesh skeleton.
We focus here on ``discontinuous skeletal'' methods, where skeletal DOFs are single-valued polynomials over faces fully discontinuous at the face boundaries.
Since this terminology is not classical in the sense of standard finite elements, we explicitly point out that here single-valued means that interface values match from one element to the adjacent one.
Discontinuous, on the other hand, refers to the fact that skeletal DOFs are discontinuous at vertices in 2d and edges in 3d.

Let $\Omega\subset \Real^d$, $d\ge1$, denote an open, bounded, connected polytopal set, and let $f\in L^2(\Omega)$.
To avoid unnecessary complications, we consider the following pure diffusion model problem:
Find $u:\Omega\to\Real$ such that
\begin{equation}
  \label{eq:strong}
  \begin{alignedat}{2}
    -\LAPL u &= f &\qquad&\text{in $\Omega$}, \\
    u &= 0 &\qquad&\text{on $\partial\Omega$}.
  \end{alignedat}
\end{equation}
We introduce a unified formulation of discontinuous skeletal discretizations of problem~\eqref{eq:strong} which encompasses a large number of schemes from the literature.
As a starting point, we define two families of discretizations corresponding, respectively, to mixed and primal discontinuous skeletal methods.
Each family is uniquely identified by prescribing three polynomial degrees defining element-based and skeletal DOFs, and a stabilization bilinear form which has to satisfy two properties of simple verification: stability expressed in terms of a uniform norm equivalence, and polynomial consistency.
Several examples of methods available in the recent literature are shown to belong to either one of those families.
We then prove new equivalence results, collected in Theorems~\ref{thm:hybridization},~\ref{thm:equivalence:m->p}, and~\ref{thm:equivalence:p->m} below,  which build a bridge between the two families of methods.
Precisely, we show that for any mixed method there exists a corresponding equivalent primal method, and the converse is true provided that the gradients are approximated in suitable spaces.
A unified convergence analysis is also carried out delivering optimal error estimates in both energy- and $L^2$-norms; cf. Theorems~\ref{thm:en.err.est} and~\ref{thm:l2.err.est} below.

A fundamental and inspiring example is presented in
Section~\ref{sec:RT0.CR0}: it refers to the well-known equivalence between
the lowest-order Raviart--Thomas element and the nonconforming Crouzeix--Raviart
element on triangular meshes. In some sense, the framework presented in this
paper extends, with suitable modifications, this equivalence to recent methods
supporting general polytopal meshes.

Polytopal methods were first investigated in the context of lowest-order discretizations starting from several different points of view.
In the context of finite volume schemes, several families of polyhedral methods have been developed as an effort to weaken the conditions on the mesh required for the consistency of classical five-point schemes.
The resulting methods are expressed in terms of local balances, and an explicit expression for the numerical fluxes is usually available.
Discontinuous skeletal methods in this context include the Mixed and Hybrid Finite Volume schemes of~\cite{Droniou.Eymard:06,Eymard.Gallouet.ea:10}.
Continuous skeletal methods have also been considered, e.g., in~\cite{Eymard.Guichard.ea:12}.

Relevant features of the continuous problem different from local conservation have inspired other approaches.
Mimetic Finite Difference methods are derived by using discrete integration by parts formulas to define the counterparts of differential operators and $L^2$-products; cf.~\cite{Beirao-da-Veiga.Lipnikov.ea:14} for an introduction.
Discontinuous skeletal methods in this context include, in particular, the mixed Mimetic Finite Difference scheme of~\cite{Brezzi.Lipnikov.ea:05}.
An example of continuous skeletal method is provided, on the other hand, by the nodal scheme of~\cite{Brezzi.Buffa.ea:09}.
In the Discrete Geometric Approach~\cite{Codecasa.Specogna.ea:10}, the formal links with the continuous operators are expressed in terms of Tonti diagrams~\cite{Tonti:75}.
We also cite in this context the Compatible Discrete Operator framework of~\cite{Bonelle.Ern:14}.
To different extents, all of the previous methods can be linked to the seminal ideas of Whitney on geometric integration.
Other methods that deserve to be cited here are the cell centered Galerkin methods of~\cite{Di-Pietro:12,Di-Pietro:13}, which can be regarded as discontinuous Galerkin methods with only one unknown per element where consistency is achieved by the use of cleverly-tailored reconstructions.

The close relation among the Mixed~\cite{Droniou.Eymard:06} and Hybrid~\cite{Eymard.Gallouet.ea:10} Finite Volume schemes and mixed Mimetic Finite Difference methods~\cite{Brezzi.Lipnikov.ea:05} has been investigated in~\cite{Droniou.Eymard.ea:10}, where equivalence at the algebraic level is demonstrated for generalized versions of such schemes; cf. also~\cite[Section~7]{Vohralik.Wohlmuth:13} for further insight into the link with submesh-based polyhedral implementations of classical mixed finite elements.
The results of~\cite{Droniou.Eymard.ea:10} are recovered here as a special case.
A unifying point of view for the convergence analysis has been recently proposed in~\cite{Droniou.Eymard.ea:13} under the name of Gradient Schemes.
Finally, the methods discussed above can often be regarded as lowest-order versions of more recent polytopal technologies such as, e.g., Virtual Elements and Hybrid High-Order methods.

A natural development of polytopal methods was headed to increase the approximation order.
It has been known for quite some time that high-order polyhedral discretizations can be obtained by fully nonconforming approaches such as the discontinuous Galerkin method.
An exposition of the basic analysis tools in this framework can be found in~\cite{Di-Pietro.Ern:12}; cf. also~\cite{Di-Pietro.Droniou:15,Di-Pietro.Droniou:16} for polynomial approximation results on polyhedral elements based on the Dupont-Scott theory~\cite{Dupont.Scott:80} and~\cite{Bassi.Botti.ea:12,Antonietti.Giani.ea:13,Cangiani.Georgoulis.ea:14} for further developments.
Particularly interesting among discontinuous Galerkin methods is the hybridizable version introduced in~\cite{Castillo.Cockburn.ea:00,Cockburn.Gopalakrishnan.ea:09}, which constitutes a first example of high-order discontinuous skeletal method.

Very recent works have shown other possible approaches to the design of high-order polytopal discretizations combining element-based and skeletal unknowns.
A first example of arbitrary-order discontinuous skeletal methods are primal~\cite{Di-Pietro.Ern.ea:14,Di-Pietro.Droniou.ea:15} and mixed~\cite{Di-Pietro.Ern:16} Hybrid High-Order methods.
Hybrid High-Order methods were originally introduced in~\cite{Di-Pietro.Ern:15*1} in the context of linear elasticity.
The main idea consists in reconstructing high-order differential operators based on suitably selected DOFs and discrete integration by parts formulas.
These reconstructions are then used to formulate the local contributions to the discrete problem including a cleverly tailored stabilization that penalizes high-order face-based residuals.
A study of the relations among primal Hybrid High-Order methods, Hybridizable Discontinuous Galerkin (HDG) methods, and High-Order Mimetic Finite Differences~\cite{Lipnikov.Manzini:14} can be found in~\cite{Cockburn.Di-Pietro.ea:15}, where the corresponding numerical fluxes in the spirit of HDG methods are identified.
The hybridization of the original mixed Hybrid High-Order method was studied in~\cite{Aghili.Boyaval.ea:15} (these results are recovered as a special case in this work).

Another framework including both continuous and discontinuous skeletal methods is provided by Virtual Elements~\cite{Beirao-da-Veiga.Brezzi.ea:13,Beirao-da-Veiga.Brezzi.ea:13*1}.
Virtual Elements can be described as finite elements where the expressions of the basis functions are not available at each point, but suitable projections thereof can be computed using the selected DOFs.
Such computable projections are then used to approximate bilinear forms, which also include a stabilization term that penalizes differences between the DOFs and the computable projection.
We are particularly interested here in mixed~\cite{Brezzi.Falk.ea:14,Beirao-da-Veiga.Brezzi.ea:15,Beirao-da-Veiga.Brezzi.ea:16} and nonconforming~\cite{Ayuso-de-Dios.Lipnikov.ea:15} Virtual Elements, both of which are discontinuous skeletal methods.

The rest of this paper is organized as follows.
In Section~\ref{sec:mesh} we formulate the assumptions on the mesh and introduce the main notation.
In Section~\ref{sec:RT0.CR0} we recall the classical equivalence of lowest-order Raviart--Thomas and nonconforming finite element methods.
In Sections~\ref{sec:mixed} and~\ref{sec:primal} we introduce the families of mixed and primal discontinuous skeletal methods under study, and provide several examples of lowest-order and high-order methods that fall in each category.
In Section~\ref{sec:m->p} we show how to obtain, starting from a discontinuous skeletal method in mixed formulation, an equivalent primal method.
Conversely, in Section~\ref{sec:p->m}, we show how to derive an equivalent mixed formulation starting from a discontinuous skeletal method in primal formulation.
Section~\ref{sec:analysis} contains a unified convergence analysis yielding optimal error estimates in the energy- and $L^2$-norms.


\section{Mesh and notation}\label{sec:mesh}

Let ${\cal H}\subset \Real_*^+ $ denote a countable set of meshsizes having $0$ as its unique accumulation point.
We consider refined mesh sequences $(\Th)_{h \in {\cal H}}$ where, for all $ h \in {\cal H} $, $\Th = \{T\}$ is a finite collection of nonempty disjoint open polytopal elements such that $\overline{\Omega}=\bigcup_{T\in\Th}\closure{T}$ and $h=\max_{T\in\Th} h_T$
($h_T$ stands for the diameter of $T$).
For $X\subset\Real^d$, we denote by $\meas[N]{X}$ the $N$-dimensional Hausdorff measure of $X$.
A hyperplanar closed connected subset $F$ of $\closure{\Omega}$ is called a face if $\meas[d-1]{F}>0$ and
\begin{inparaenum}[(i)]
\item either there exist distinct $T_1,T_2\in\Th $ such that $F=\partial T_1\cap\partial T_2$ (and $F$ is an interface) or 
\item there exists $T\in\Th$ such that $F=\partial T\cap\partial\Omega$ (and $F$ is a boundary face).
\end{inparaenum}
The set of interfaces is denoted by $\Fhi$, the set of boundary faces by $\Fhb$, and we let
$\Fh\eqbydef\Fhi\cup\Fhb$.
For all $T\in\Th$, the sets $\Fh[T]\eqbydef\{F\in\Fh\st F\subset\partial T\}$ and $\Fhi[T]\eqbydef\Fh[T]\cap\Fhi$ collect, respectively, the faces and interfaces lying on the boundary of $T$ and, for all $F\in\Fh[T]$, we denote by $\normal_{TF}$ the normal to $F$ pointing out of $T$.
Symmetrically, for all $F\in\Fh$, $\Th[F]\eqbydef\{T\in\Th\st F\subset\partial T\}$ is the set containing the one or two elements sharing $F$.

We assume that $(\Th)_{h\in{\cal H}}$ is admissible in the sense of~\cite[Chapter~1]{Di-Pietro.Ern:12}, i.e., for all $h\in{\cal H}$, $\Th$ admits a matching simplicial submesh $\fTh$ and there exists a real number $\varrho>0$ (the mesh regularity parameter) independent of $h$ such that the following conditions hold:
\begin{inparaenum}[(i)]
\item For all $h\in{\cal H}$ and all simplex $S\in\fTh$ of diameter $h_S$ and inradius $r_S$, $\varrho h_S\le r_S$;
\item for all $h\in{\cal H}$, all $T\in\Th$, and all $S\in\fTh$ such that $S\subset T$, $\varrho h_T \le h_S$.
\end{inparaenum}
We refer to~\cite[Chapter~1]{Di-Pietro.Ern:12} and~\cite{Di-Pietro.Droniou:15,Di-Pietro.Droniou:16} for a set of geometric and functional analytic results valid on admissible meshes.

Let $X$ be a mesh element or face.
For an integer $l\ge 0$, we denote by $\Poly{l}(X)$ the space spanned by the restriction to $X$ of $d$-variate polynomials of total degree $l$.
We denote by $(\cdot,\cdot)_X$ and $\norm[X]{{\cdot}}$ the usual inner product and norm of $L^2(X)$.
The index is dropped when $X=\Omega$.
The $L^2$-projector $\lproj[X]{l}:L^1(X)\to\Poly{l}(X)$ is defined such that, for all $v\in L^1(X)$,
\begin{equation}\label{eq:lproj}
  (\lproj[X]{l}v-v,w)_X=0\qquad\forall w\in\Poly{l}(X).
\end{equation}

Let a mesh element $T\in\Th$ be fixed.
For all integer $l\ge 0$ we set
$$
\cGT[l]\eqbydef\GRAD\Poly{l+1}(T),\qquad
\coGT[l]\eqbydef\left\{
\vtau\in\Poly{l}(T)^d\st (\vtau,\GRAD w)_T=0\quad\forall w\in\Poly{l+1}(T)
\right\},
$$
and denote by $\vlproj[\cG,T]{l}:L^1(T)^d\to\cGT[l]$ and $\vlproj[\coG,T]{l}:L^1(T)^d\to\coGT[l]$ the $L^2$-orthogonal projectors on $\cGT[l]$ and $\coGT[l]$, respectively.
Clearly, we have the direct decomposition
\begin{equation}\label{eq:decomp.PTs}
  \Poly{l}(T)^d = \cGT[l] \oplus \coGT[l].
\end{equation}
For further use, at the global level, we also define the space of broken polynomials
$$
\Poly{l}(\Th)\eqbydef\left\{ v_h\in L^2(\Omega)\st v_T\eqbydef\restrto{v_h}{T}\in\Poly{l}(T)\quad\forall T\in\Th \right\}.
$$

Throughout the paper, to avoid naming constants, we use the abridged notation $a\lesssim b$ for the inequality $a\le Cb$ with real number $C>0$ independent of $h$.
We will also write $a\approx b$ to mean $a\lesssim b\lesssim a$.


\section{An inspiring example}\label{sec:RT0.CR0}

In order to put the following discussion into perspective, we start by recalling an important inspiring example, viz. the well-known equivalence between lowest-order
Raviart--Thomas element and nonconforming Crouzeix--Raviart element on
triangular meshes.

The Raviart--Thomas element~\cite{Raviart.Thomas:77} is widely used for the
approximation of problems involving $\Hdiv$ when $\Th$ is a matching triangular mesh. A popular implementation of
the Raviart--Thomas scheme makes use of a hybridization procedure, introducing a
Lagrange multiplier in order to enforce the continuity of the normal component
of vectors from one element to the other.
As a starting point, problem~\eqref{eq:strong} is written
in mixed form as follows: Find the flux $\vsigma\in\Hdiv$ and the potential $u\in L^2(\Omega)$ such
that
\[
\begin{alignedat}{2}
  (\vsigma,\vtau)+(\opdiv\vtau,u)&=0&\qquad&\forall\vtau\in\Hdiv,
  \\
  -(\opdiv\vsigma,v)&=(f,v)&\qquad&\forall v\in L^2(\Omega).
\end{alignedat}
\]
Taking the Raviart--Thomas finite element space $\RT[0](\Th)\subset\Hdiv$ for the flux and the space of piecewise constants $\Poly{0}(\Th)\subset L^2(\Omega)$ for the potential,
its discretization reads: Find $\vsigma_h\in\RT[0](\Th)$ and $u_h\in\Poly{0}(\Th)$ such
that
\begin{equation}
\begin{alignedat}{2}
  (\vsigma_h,\vtau_h)+(\opdiv\vtau_h,u_h)&=0&\qquad&\forall\vtau_h\in\RT[0](\Th),
  \\
  -(\opdiv\vsigma_h,v_h)&=(f,v_h)&\qquad&\forall v_h\in\Poly{0}(\Th).
\end{alignedat}
\label{eq:RT}
\end{equation}
The hybridized version of~\eqref{eq:RT} consists in introducing the space
$\Lambda_h$ of piecewise constants on the internal portion of the mesh skeleton, and in
solving the following problem which involves the \emph{discontinuous}
Raviart--Thomas space $\RT[0,\mathrm{d}](\Th)$:
Find $\vsigma_h\in\RT[0,\mathrm{d}](\Th)$, $u_h\in\Poly{0}(\Th)$, and $\lambda_h\in\Lambda_h$ such that
\begin{equation}
\begin{alignedat}{2}
  (\vsigma_h,\vtau_h)+(\opdiv\vtau_h,u_h)
  +\sum_{T\in\Th}\sum_{F\in\Fhi[T]}(\vtau_h\SCAL\normal_{TF},\lambda_h)_F
  &=0&\qquad&\forall\vtau_h\in\RT[0,\mathrm{d}](\Th),
  \\
  -(\opdiv\vsigma_h,v_h)&=(f,v_h)&\qquad&\forall v_h\in\Poly{0}(\Th),
  \\
  \sum_{T\in\Th}\sum_{F\in\Fhi[T]}(\vsigma_h\SCAL\normal_{TF},\mu_h)_F&=0&\qquad&\forall\mu_h\in\Lambda_h.
\end{alignedat}
\label{eq:RThyb}
\end{equation}
The usual way of solving problem~\eqref{eq:RThyb} is to invert the
(block-diagonal) mass matrix corresponding to the variables in $\RT[0,\mathrm{d}](\Th)$ and
to consider a statically condensed linear system of the form
\begin{equation}
\mathbf{A}\mathsf{\Lambda}=\mathsf{F}
\label{eq:RTstatcond}
\end{equation}
where $\mathbf{A}$ is symmetric and positive definite.

Let now $\NC(\Th)$ be the nonconforming Crouzeix--Raviart space
of~\cite{Crouzeix.Raviart:73} on the same mesh $\Th$; i.e., the space of
piecewise affine functions which are continuous on the midnodes of the
interelement edges.
Denoting by $\NC_0(\Th)$ the subspace of $\NC(\Th)$ with DOFs lying on $\partial\Omega$ set to zero, the approximation of problem~\eqref{eq:strong} reads: Find
$u_h\in\NC_0(\Th)$ such that
\begin{equation}
(\GRADh u_h,\GRADh v_h)=(f,v_h)\qquad\forall v_h\in\NC_0(\Th),
\label{eq:nonc}
\end{equation}
where $\GRADh$ denotes the broken gradient operator on $\Th$.
The matrix form of~\eqref{eq:nonc} is
\[
\mathbf{B}\mathsf{U}=\mathsf{G}
\]
with $\mathbf{B}$ symmetric and positive definite.
It is now well understood that the matrices $\mathbf{A}$ and
$\mathbf{B}$ are identical, as well as the corresponding right hand
sides $\mathsf{F}$ and $\mathsf{G}$. This important equivalence is a
consequence of the results of~\cite{Arnold.Brezzi:85,
Marini:85},~\cite{Arbogast.Chen:95, Chen:96}, and has been reported in this
form in~\cite{Vohralik.Wohlmuth:13}.

A natural question is whether results of this type can be obtained for higher
order schemes on general polytopal meshes. The results that we are going to
present aim at describing a unified setting where the equivalence of primal,
mixed, and hybrid formulation can be proved.
For a discussion of lowest-order Raviart--Thomas and Crouzeix--Raviart elements in the framework
introduced in the following section, we refer to Examples~\ref{ex:RT} and~\ref{ex:NC}, respectively.


\section{A family of mixed discontinuous skeletal methods}\label{sec:mixed}

In this section we introduce a family of mixed discontinuous skeletal methods and provide a few examples of members of this family.

\subsection{Local spaces}

For a given integer $k\ge 0$ corresponding to the skeletal polynomial degree, we let $l$ and $m$ be two integers such that
\begin{equation}\label{eq:l.m}
  \max(0,k-1)\le l\le k+1,\qquad
  m\in\{0,k\}.
\end{equation}
Let a mesh element $T\in\Th$ be given.
We define the following space of flux degrees of freedom (DOFs):
\begin{equation}\label{eq:uvSigmaT}
  \uvSigmaT\eqbydef(\cGT[l-1]\oplus\coGT)\times\left(\bigtimes_{F\in\Fh[T]}\Poly{k}(F)\right).
\end{equation}
For a generic element $\uvtau$ of $\uvSigmaT$ we use the notation $\uvtau=(\vtau_T,(\tau_{TF})_{F\in\Fh[T]})$ with $\vtau_T=\vtau_{\cG,T}+\vtau_{\coG,T}$.
For a fixed Lebesgue index $s>2$, we let $\vSigma^+(T)\eqbydef\{\vtau\in L^s(T)^d\st\DIV\vtau\in L^2(T)\}$ and define the local flux reduction map $\uvIST:\vSigma^+(T)\to\uvSigmaT$ such that, for all $\vtau\in\vSigma^+(T)$,
\begin{equation}\label{eq:uvIST}
  \uvIST\vtau\eqbydef
  \big(
  \vlproj[\cG,T]{l-1}\vtau+\vlproj[\coG,T]{m}\vtau,
  \left(\lproj[F]{k}(\vtau\SCAL\normal_{TF})\right)_{F\in\Fh[T]}
  \big).
\end{equation}
The space $\uvSigmaT$ is equipped with the $L^2(T)^d$-like norm $\norm[\vSigma,T]{{\cdot}}$ such that, for all $\uvtau\in\uvSigmaT$,
\begin{equation}\label{eq:norm.vSigmaT}
  \begin{aligned}
  \norm[\vSigma,T]{\uvtau}^2  
  &\eqbydef \norm[T]{\vtau_T}^2 + \sum_{F\in\Fh[T]} h_F\norm[F]{\tau_{TF}}^2
  \\
  &=\norm[T]{\vtau_{\cG,T}}^2 + \norm[T]{\vtau_{\coG,T}}^2 + + \sum_{F\in\Fh[T]} h_F\norm[F]{\tau_{TF}}^2,
  \end{aligned}
\end{equation}
where to pass to the second line we have used the orthogonal decomposition~\eqref{eq:decomp.PTs}.
Finally, we define the following space of local potential DOFs:
\begin{equation}\label{eq:UT}
\UT\eqbydef\Poly{l}(T).
\end{equation}

\subsection{Local reconstruction operators}

The family of mixed discretizations of problem~\eqref{eq:strong} relies on operator reconstructions defined at the element level.
Let $T\in\Th$.
The discrete divergence $\DT:\uvSigmaT\to\UT$ is such that, for all $\uvtau\in\uvSigmaT$,
\begin{equation}\label{eq:DT}
  (\DT\uvtau,q)_T
  = -(\vtau_T,\GRAD q)_T + \sum_{F\in\Fh[T]}(\tau_{TF},q)_F\qquad
  \forall q\in\UT.
\end{equation}
The right-hand side of~\eqref{eq:DT} resembles an integration by parts formula where the role of the vector function represented by $\uvtau$ in volumetric and boundary integrals is played by the element-based and face-based DOFs, respectively.

The local reconstruction $\PT:\uvSigmaT\to\cGT[k]$ of the irrotational component of the flux is such that, for all $\uvtau\in\uvSigmaT$,
\begin{equation}\label{eq:PT}
  (\PT\uvtau,\GRAD w)_T
  = -(\DT\uvtau,w)_T + \sum_{F\in\Fh[T]}(\tau_{TF},w)_F\qquad
  \forall w\in\Poly{k+1}(T),
\end{equation}
where again the right-hand side is designed to resemble an integration by parts formula where the continuous divergence operator is replaced by $\DT$, while the role of normal trace of the vector function represented by $\uvtau$ is played by boundary DOFs.

\begin{remark}\label{rem:dep.DT.FT}
  The flux DOFs $\vtau_{\coG,T}\in\coGT[m]$ do not intervene in the definitions of either $\DT$ nor $\PT$.
\end{remark}
Finally, we define the full vector field reconstruction $\ST:\uvSigmaT\to\Poly{k}(T)^d$ such that, for all $\uvtau\in\uvSigmaT$,
\begin{equation}\label{eq:ST}
  \ST\uvtau \eqbydef \PT\uvtau + \vtau_{\coG,T}.
\end{equation}
The following properties hold:
\begin{alignat}{2}\label{eq:commut.DT}
  \DT\uvIST\vtau &= \lproj[T]{l}(\DIV\vtau) &\qquad&\forall\vtau\in\vSigma^+(T),
  \\ \label{eq:cons.FT}
  \PT\uvIST\vtau &= \vtau &\qquad&\forall\vtau\in\cGT[k].
\end{alignat}
Defining the space
\begin{equation}\label{eq:cST}
  \cST
  \eqbydef\begin{cases}
  \cGT[k] & \text{if $m=0$}, \\ \Poly{k}(T)^{d} & \text{if $m=k$,}
  \end{cases}
\end{equation}
it follows from~\eqref{eq:cons.FT} together with the orthogonal decomposition~\eqref{eq:decomp.PTs} and the definitions~\eqref{eq:uvIST} of the reduction map $\uvIST$ and~\eqref{eq:ST} of $\ST$ that
\begin{equation}\label{eq:cons.ST}
  \ST\uvIST\vtau = \vtau\qquad\forall\vtau\in\cST,
\end{equation}
which expresses the polynomial consistency of $\ST$.

\subsection{Local bilinear form}

Let $T\in\Th$.
We approximate the $L^2(T)^d$-product of fluxes by means of the bilinear form $\mathrm{m}_T:\uvSigmaT\times\uvSigmaT\to\Real$ such that
\begin{subequations}
  \begin{align}\label{eq:mT}
    \mathrm{m}_T(\uvsigma,\uvtau)
    &\eqbydef (\ST\uvsigma,\ST\uvtau)_T + \mathrm{s}_{\vSigma,T}(\uvsigma,\uvtau)
    \\\label{eq:mT'}
    &= (\PT\uvsigma,\PT\uvtau)_T + (\vsigma_{\coG,T},\vtau_{\coG,T})_T + \mathrm{s}_{\vSigma,T}(\uvsigma,\uvtau),
  \end{align}
\end{subequations}
where the right-hand side is composed of a consistency and a stabilization term.

\begin{assumption}[Bilinear form $\mathrm{s}_{\vSigma,T}$]\label{ass:s0T}
  The symmetric, positive semi-definite bilinear form $\mathrm{s}_{\vSigma,T}:\uvSigmaT\times\uvSigmaT\to\Real$ satisfies the following properties:
  \begin{enumerate}[(S1)]
  \item \emph{Stability.}\label{eq:S1} It holds, for all $\uvtau\in\uvSigmaT$, with norm $\norm[\vSigma,T]{{\cdot}}$ defined by~\eqref{eq:norm.vSigmaT},
    $$
    \norm[\mathrm{m},T]{\uvtau}^2\eqbydef \mathrm{m}_T(\uvtau,\uvtau)\approx\norm[\vSigma,T]{\uvtau}^2;
    $$
  \item \emph{Polynomial consistency.}\label{eq:S2} For all $\vchi\in\cST$, with local flux reduction map $\uvIST$ defined by~\eqref{eq:uvIST},
    $$
    \mathrm{s}_{\vSigma,T}(\uvIST\vchi,\uvtau)=0\qquad\forall\uvtau\in\uvSigmaT.
    $$
  \end{enumerate}
\end{assumption}

\subsection{Global spaces and mixed problem}

We define the following global discrete spaces for the flux:
\begin{equation}\label{eq:uvSigmah}
  \uvcSigmah\eqbydef\bigtimes_{T\in\Th}\uvSigmaT,\qquad
  \uvSigmah\eqbydef\left\{
  \uvtau[h]\in\uvcSigmah\;\Big|\;
  \sum_{T\in\Th[F]}\tau_{TF}=0\quad\forall F\in\Fhi
  \right\}.
\end{equation}
The restriction of a DOF vector $\uvtau[h]\in\uvcSigmah$ to a mesh element $T\in\Th$ is denoted by $\uvtau\in\uvSigmaT$, and we equip $\uvcSigmah$ (hence also $\uvSigmah$) with the $L^2(\Omega)^d$-like norm  (cf.~\eqref{eq:norm.vSigmaT} for the definition of $\norm[\vSigma,T]{{\cdot}}$)
\begin{equation}\label{eq:norm.vSigmah}
  \norm[\vSigma,h]{\uvtau[h]}^2\eqbydef\sum_{T\in\Th}\norm[\vSigma,T]{\uvtau}^2.
\end{equation}
The global space for the potential is spanned by broken polynomials of total degree $l$:
\begin{equation}\label{eq:Uh}
  \Uh\eqbydef\Poly{l}(\Th).
\end{equation}
The global $L^2(\Omega)^d$-like product on $\uvcSigmah$ is defined by element-by-element assembly setting, for all $\uvsigma[h],\uvtau[h]\in\uvcSigmah$,
\begin{equation}\label{eq:mh}
  \mathrm{m}_h(\uvsigma[h],\uvtau[h])
  \eqbydef
  \sum_{T\in\Th}\mathrm{m}_T(\uvsigma,\uvtau).
\end{equation}
We also need the global divergence operator $\Dh:\uvcSigmah\to\Uh$ such that, for all $\uvtau[h]\in\uvcSigmah$,
$$
\restrto{(\Dh\uvtau[h])}{T} = \DT\uvtau\qquad\forall T\in\Th.
$$

\begin{problem}[Mixed problem]
  Find $(\uvsigma[h],u_h)\in\uvSigmah\times\Uh$ such that,
  \begin{subequations}\label{eq:mixed.h}
    \begin{alignat}{2}
      \label{eq:mixed.h:1}
      \mathrm{m}_h(\uvsigma[h],\uvtau[h]) + (u_h,\Dh\uvtau[h]) &= 0
      &\qquad&\forall\uvtau[h]\in\uvSigmah,    
      \\
      \label{eq:mixed.h:2}
      -(\Dh\uvsigma[h],v_h) &= (f,v_h) &\qquad&\forall v_h\in\Uh.
    \end{alignat}
  \end{subequations}
\end{problem}
Using standard arguments relying on the coercivity of $\mathrm{m}_h$ (a consequence of (S1)) and the existence of a Fortin interpolator (cf.~\eqref{eq:commut.DT}), one can prove that problem~\eqref{eq:mixed.h} is well-posed; cf., e.g.,~\cite{Boffi.Brezzi.ea:13}.

\begin{remark}[Hybridization and static condensation]\label{rem:hyb.stat.cond.mixed}
  Various possibilities are available to make the actual implementation of the method~\eqref{eq:mixed.h} more efficient.
  A first option consists in implementing the equivalent primal reformulation~\eqref{eq:hybrid.h} described in detail below; cf. also Remark~\ref{rem:stat.cond.primal}.
  Another option, in the spirit of~\cite{Araya.Harder.ea:13}, consists in locally eliminating element-based flux DOFs and element-based potential DOFs of degree $\ge 1$ by locally solving small mixed problems.
  The resulting global problem is expressed in terms of the skeletal flux DOFs plus one potential DOF per element.
\end{remark}

\subsection{Examples}\label{sec:mixed:examples}

We provide in this section a few examples of discontinuous skeletal methods originally introduced in a mixed formulation which can be traced back to~\eqref{eq:mixed.h}.
Each method is uniquely defined by prescribing the three polynomial degrees $k$, $l$, and $m$ (in accordance with~\eqref{eq:l.m}) and the expression of the local stabilization bilinear form $s_{\vSigma,T}$ for a generic mesh element $T\in\Th$.
A synopsis is provided in Table~\ref{tab:mixed}.

\begin{table}\centering
  \begin{tabular}{cccccc}
    \toprule
    Ref. & Name & $k$ & $l$ & $m$ & $\mathrm{s}_{\vSigma,T}$ \\
    \midrule
    \cite{Raviart.Thomas:77} & $\RT[0]$ Finite Element & 0 & 0 & 0 & Eq.~\eqref{eq:s0T:RT0} \\
    \cite{Brezzi.Lipnikov.ea:05} & Mimetic Finite Difference &
    \multirow{2}{*}{$0$} & \multirow{2}{*}{$0$} & \multirow{2}{*}{$0$} & \multirow{2}{*}{Eq.~\eqref{eq:s0T:MFV}} \\
    \cite{Droniou.Eymard.ea:10} & Mixed Finite Volume \\ 
    \cite{Codecasa.Specogna.ea:10} & Discrete Geometric Approach & $0$ & $0$ & $0$ & Eq.~\eqref{eq:s0T:DGA} \\
    \cite{Di-Pietro.Ern:16} & Mixed High-Order & $\ge 0$ & $k$ & $0$ & Eq.~\eqref{eq:s0T:MHO} \\    
    \cite{Brezzi.Falk.ea:14} & Mixed Virtual Element & $\ge 1$ & $k-1$ & $0$ & Eq.~\eqref{eq:s0T:VEM.mixed.BFM} \\    
    \cite{Beirao-da-Veiga.Brezzi.ea:16} & Mixed Virtual Element & $\ge 0$ & $k$ & $k$ & Eq.~\eqref{eq:s0T:VEM.mixed.BBMR} \\    
    \bottomrule
  \end{tabular}
  \caption{Examples of methods originally introduced in mixed formulation.\label{tab:mixed}}
\end{table}

\begin{example}[The Mimetic Finite Difference method of~\cite{Brezzi.Lipnikov.ea:05} and the Mixed Finite Volume method of~\cite{Droniou.Eymard.ea:10}]\label{ex:MFV}
  The Mimetic Finite Difference method of~\cite{Brezzi.Lipnikov.ea:05} and the Mixed Finite Volume method of~\cite[Section~2.3]{Droniou.Eymard.ea:10} (which is a variation of the one originally introduced in~\cite{Droniou.Eymard:06}) correspond to the choice $k=l=m=0$.
  We present them together since an equivalence result was already proved in~\cite{Droniou.Eymard.ea:10}.
  In the lowest-order case, explicit expressions can be found for both $\DT[0]$ and $\ST[0]=\PT[0]$:
  For all $\uvtau\in\uvSigmaT[0,0,0]$,
  \begin{equation}\label{eq:DT0.ST0}
    \DT[0]\uvtau = \frac{1}{\meas{T}}\sum_{F\in\Fh[T]}\meas[d-1]{F}\tau_{TF},\qquad
    \ST[0]\uvtau = \PT[0]\uvtau = \frac{1}{\meas{T}}\sum_{F\in\Fh[T]}\meas[d-1]{F}\tau_{TF}(\vec{x}_F - \vec{x}_T),
  \end{equation}
  where $\vec{x}_F$ is the barycenter of $F$ and $\vec{x}_T$ is an arbitrary point associated with $T$ which may or may not belong to $T$.
  The stabilization is parametrized by a symmetric, positive definite matrix $\matr{B}^T=(B_{FF^\prime}^T)_{F,F^\prime\in\Fh[T]}$:
    \begin{equation}\label{eq:s0T:MFV}
      \mathrm{s}_{\vSigma,T}(\uvsigma,\uvtau) =
      \sum_{F\in\Fh[T]}\sum_{F^\prime\in\Fh[T]}
      (\ST[0]\uvsigma\SCAL\normal_{TF}-\sigma_{TF}) B_{FF^\prime}^T(\ST[0]\uvtau\SCAL\normal_{TF^\prime}-\tau_{TF^\prime}).
    \end{equation}
    It is worth noting that the original Mixed Finite Volume method of~\cite{Droniou.Eymard:06} does not enter the present framework as the corresponding stabilization bilinear form
    $\mathrm{s}_{\vSigma,T}(\uvsigma,\uvtau) = \sum_{F\in\Fh[T]}h_T\meas[d-1]{F}\sigma_{TF}\tau_{TF}$
    violates (S2) (it is, however, weakly consistent).
\end{example}

\begin{example}[The lowest-order Raviart--Thomas element]\label{ex:RT}
  We assume that $T$ is an element from a matching simplicial mesh $\Th$, and consider the lowest order Raviart--Thomas space $\RT[0](T)\eqbydef\Poly{0}(T)^d+\vec{x}\Poly{0}(T)$ of \cite{Raviart.Thomas:77}.
  Clearly, the vector space $\uvSigmaT[0,0,0]$ contains the standard DOFs for $\RT[0](T)$ defined by the flux reduction map $\uvIST[0,0,0]$ as the average values of the normal components on each face.
  It can be checked that $\RT[0](T) = \opspan\left(\vec{\varphi}_F^T\right)_{F\in\Fh[T]}$ where, with $\vec{x}_T$ and $\vec{x}_F$ barycenters of $T$ and $F\in\Fh[T]$, respectively,
  $$
  \vec{\varphi}_F^T(\vec{x})\eqbydef
  \frac{\meas[d-1]{F}}{\meas{T}}(\vec{x}_F-\vec{x}_T)  
  + \frac{\meas[d-1]{F}}{d\meas{T}}(\vec{x}-\vec{x}_T)
  \qquad\forall\vec{x}\in T,
  $$
  and it holds $\restrto{(\vec{\varphi}_F^T\SCAL\normal_{TF})}{F}=1$ and $\restrto{(\vec{\varphi}_F^T\SCAL\normal_{TF'})}{F'}=0$ for all $F'\in\Fh[T]\setminus\{F\}$ (in $d=2$, this formula is a variation of \cite[Eq.~(4.3)]{Bahriawati.Carstensen:05}).
  Let $\vec{\mathfrak{t}}_T\in\RT[0](T)$ and $\uvtau=(\tau_{TF})_{F\in\Fh[T]}\eqbydef\uvIST[0,0,0]\vec{\mathfrak{t}}_T$, so that $\vec{\mathfrak{t}}_T = \sum_{F\in\Fh[T]}\vec{\varphi}_F^T\tau_{TF}$.
  Straightforward computations show that
  $$
  \DIV\vec{\mathfrak{t}}_T = \DT[0]\uvtau,\qquad  
  \vlproj[T]{0}\vec{\mathfrak{t}}_T = \ST[0]\uvtau = \PT[0]\uvtau,
  $$
  with explicit expressions for $\DT[0]$ and $\ST[0]=\PT[0]$ given by \eqref{eq:DT0.ST0}.
  Hence, we can rewrite the $L^2$-product of two functions $\vec{\mathfrak{s}}_T,\vec{\mathfrak{t}}_T\in\RT[0](T)$ with DOFs $\uvsigma\eqbydef\uvIST[0,0,0]\vec{\mathfrak{s}}_T$ and $\uvtau\eqbydef\uvIST[0,0,0]\vec{\mathfrak{t}}_T$ as follows:
  \begin{equation}\label{eq:mT.RT0}
    (\vec{\mathfrak{s}}_T,\vec{\mathfrak{t}}_T)_T
    = (\lproj[T]{0}\vec{\mathfrak{s}}_T,\lproj[T]{0}\vec{\mathfrak{t}}_T)_T
    + (\vec{\mathfrak{s}}_T-\lproj[T]{0}\vec{\mathfrak{s}}_T,\vec{\mathfrak{t}}_T-\lproj[T]{0}\vec{\mathfrak{t}}_T)_T    
    = (\ST[0]\uvsigma,\ST[0]\uvtau)_T + \mathrm{s}_{\vSigma,T}(\uvsigma,\uvtau),
  \end{equation}
  where, observing that $(\vec{\varphi}_F^T-\lproj[T]{0}\vec{\varphi}_F^T)(\vec{x})=\frac{\meas[d-1]{F}}{d\meas{T}}(\vec{x}-\vec{x}_T)$,
  \begin{equation}\label{eq:s0T:RT0}
    \mathrm{s}_{\vSigma,T}(\uvsigma,\uvtau)\eqbydef \sum_{F\in\Fh[T]}\sum_{F'\in\Fh[T]} B_{FF'}^T\sigma_{TF}\tau_{TF'},\qquad
    B_{FF'}^T\eqbydef\frac{\meas[d-1]{F}\meas[d-1]{F'}}{d^2\meas{T}^2}\int_T\norm[2]{\vec{x}-\vec{x}_T}^2\ud\vec{x}.
  \end{equation}
  From~\eqref{eq:mT.RT0} it is clear that $\mathrm{s}_{\vSigma,T}$ verifies both (S1) and (S2).
\end{example}

\begin{example}[The Discrete Geometric Approach of~\cite{Codecasa.Specogna.ea:10}]\label{ex:dga}
  Denote by $\vec{x}_T$ an arbitrary point in $T$, and assume that $T$ is star-shaped with respect to $T$.
  The Discrete Geometric Approach of~\cite{Codecasa.Specogna.ea:10} is a lowest-order method corresponding to $k=l=m=0$ based on the stable flux reconstruction such that, for all $\uvtau\in\uvSigmaT[0,0,0]$,
  \begin{equation}\label{eq:ST.dga}
    \ST[\rm dga]\uvtau\eqbydef\sum_{G\in\Fh[T]}\meas[d-1]{G}\tau_{TG}\vec{\varphi}_{TG},
  \end{equation}
  where, for all $G\in\Fh[T]$, the restriction of the basis function $\vec{\varphi}_{TG}$ to any pyramid $\PTF$ of apex $\vec{x}_T$ and base $F\in\Fh[T]$ satisfies, denoting by $\vec{x}_F$ the barycenter of $F$ and setting $\mathfrak{h}_{TF}\eqbydef{\rm dist}(\vec{x}_T,F)$,
  \begin{equation}\label{eq:varphi.dga}
    \restrto{\vec{\varphi}_{TG}}{\PTF}
    \eqbydef
    \frac{(\vec{x}_G-\vec{x}_T)}{\meas[d]{T}}
    + \left(
    \frac{(\vec{x}_F-\vec{x}_T)\otimes\normal_{TF}}{\meas[d]{T}\mathfrak{h}_{TF}} - \frac{\delta_{FG}}{\meas[d-1]{G}\mathfrak{h}_{TG}}{\Id}
    \right)(\vec{x}_T-\vec{x}_G),
  \end{equation}
  where $\delta_{FG}=1$ if $F=G$, $0$ otherwise.
  The local bilinear form $\mathrm{m}_T$ is then defined setting, for all $\uvsigma,\uvtau\in\uvSigmaT[0,0,0]$,
  \begin{equation}\label{eq:mT.dga}
    \mathrm{m}_T(\uvsigma,\uvtau)\eqbydef(\ST[\rm dga]\uvsigma,\ST[\rm dga]\uvtau)_T.
  \end{equation}
  Plugging~\eqref{eq:varphi.dga} into~\eqref{eq:ST.dga}, and using the second formula in~\eqref{eq:DT0.ST0}, we can identify in the expression of $\ST[\rm dga]$ two $L^2(T)^d$-orthogonal contributions observing that, for all $\uvtau\in\uvSigmaT[0,0,0]$ and all $F\in\Fh[T]$, it holds
  $$
  \restrto{(\ST[\rm dga]\uvtau)}{\PTF}
  = \ST[0]\uvtau + \mathfrak{h}_{TF}^{-1}(\ST[0]\vtau\SCAL\normal_{TF} - \tau_{TF})(\vec{x}_T-\vec{x}_F),
  $$
  where the first term in the right-hand side represents the consistent part of the flux, while the second acts as a stabilization.
  Hence, a straightforward computation shows that the bilinear form $\mathrm{m}_T$ defined by~\eqref{eq:mT.dga} can be recast in the form~\eqref{eq:mT} with stabilization bilinear form
  \begin{equation}\label{eq:s0T:DGA}
    \mathrm{s}_{\vSigma,T}(\uvsigma,\uvtau) =
    \sum_{F\in\Fh[T]}\frac{\norm[2]{\vec{x}_T-\vec{x}_F}^2}{d\mathfrak{h}_{TF}}(\ST[0]\uvsigma\SCAL\normal_{TF}-\sigma_{TF},\ST[0]\uvtau\SCAL\normal_{TF}-\tau_{TF})_F.
  \end{equation}
  Note that this expression can be recovered from~\eqref{eq:s0T:MFV} taking $\matr{B}^T={\rm diag}\left(\frac{\norm[2]{\vec{x}_T-\vec{x}_F}^2\meas[d-1]{F}}{d \mathfrak{h}_{TF}}\right)_{F\in\Fh[T]}$.
\end{example}

\begin{example}[The Mixed High-Order method of~\cite{Di-Pietro.Ern:16}]
  The Mixed High-Order method of~\cite{Di-Pietro.Ern:16} corresponds to the choice $l=k$ and $m=0$, for which $\ST=\PT$ holds.
  The stabilization term is defined by penalizing face-based residuals in a least-square fashion:
  \begin{equation}\label{eq:s0T:MHO}
    \mathrm{s}_{\vSigma,T}(\uvsigma,\uvtau) =
    \sum_{F\in\Fh[T]}h_F(\ST\uvsigma\SCAL\normal_{TF}-\sigma_{TF},\ST\uvtau\SCAL\normal_{TF}-\tau_{TF})_F.
  \end{equation}
  When $k=0$, this stabilization bilinear form coincides with~\eqref{eq:s0T:MFV} with $\matr{B}^T={\rm diag}(h_F\meas[d-1]{F})_{F\in\Fh[T]}$.
\end{example}

\begin{example}[The Virtual Element method of~\cite{Brezzi.Falk.ea:14}]
  Let $d=2$.
  We consider the Mixed Virtual Element method of~\cite{Brezzi.Falk.ea:14} when the diffusion tensor (denoted by $\mathbb{K}$ in the reference) is the $2\times 2$ identity matrix $\Id[2]$.
  In this case, while the DOFs for the flux~\cite[Eq. (3.8)]{Brezzi.Falk.ea:14} do not coincide with the ones in~\eqref{eq:uvSigmaT}, the resulting method~\cite[Eq.~(6.1)]{Brezzi.Falk.ea:14} can be recast in the form~\eqref{eq:mixed.h} (note, however, that this is no longer true for more general diffusion tensors).
  For a given integer $k\ge 1$, the underlying finite-dimensional local virtual space is
  \begin{multline*}
  \vfS{{\rm vem},1}(T)\eqbydef\{
  \vec{\mathfrak{t}}_T\in\Hdiv[T]\cap\Hrot[T]\st \\
  \text{%
    $\DIV\vec{\mathfrak{t}}_T\in\Poly{k-1}(T)$, $\ROT\vec{\mathfrak{t}}_T\in\Poly{k-1}(T)$, and $\restrto{\vec{\mathfrak{t}}_T}{F}\SCAL\normal_{TF}\in\Poly{k}(F)$ for all $F\in\Fh[T]$%
  }
  \},
  \end{multline*}
  where $\ROT\vec{\mathfrak{t}}_T\eqbydef\partial_1\mathfrak{t}_{T,2}-\partial_2\mathfrak{t}_{T,1}$.
  Observing that, when $\mathbb{K}=\Id[2]$, for all $\vec{\mathfrak{t}}_T\in\vfS{{\rm vem},1}(T)$, $\ROT\vec{\mathfrak{t}}_T$ does not contribute to defining $\DIV\vec{\mathfrak{t}}_T$ nor the projection on $\cGT[k]$ defined by~\cite[Eq.~(5.5)]{Brezzi.Falk.ea:14}, it can be showed that the stabilization term in~\cite[Eq.~(5.6)]{Brezzi.Falk.ea:14} actually enforces a zero-rot condition on the discrete solution.
  Hence, we can equivalently reformulate the method~\cite[Eq.~(6.1)]{Brezzi.Falk.ea:14} in terms of the zero-rot subspace
  $$
  \vfS{{\rm vem},1}(\ROT_0;T)\eqbydef\{
  \vec{\mathfrak{t}}_T\in\vfS{{\rm vem},1}(T)\st \ROT\vec{\mathfrak{t}}_T=0
  \}.
  $$
  This equivalent reformulation corresponds to the mixed form~\eqref{eq:mixed.h} with polynomial degrees $l=k-1$, and $m=0$, and stabilization bilinear form $\mathrm{s}_{\vSigma,T}$ defined as described hereafter.
  We preliminarily observe that the reduction map $\uvIST[k,k-1,0]$ (cf.~\eqref{eq:uvIST}) defines an isomorphism from $\vfS{{\rm vem},1}(\ROT_0;T)$ to $\uvSigmaT[k,k-1,0]$.
  Assume that a basis for $\uvSigmaT[k,k-1,0]$ has been fixed (a scaled monomial basis is proposed in the original reference), and denote by $\mathrm{S}^{{\rm vem},1}_{\vSigma,T}$ the bilinear form on $\vfS{{\rm vem},1}(\ROT_0;T)\times\vfS{{\rm vem},1}(\ROT_0;T)$ represented by the identity matrix in this basis.
  The stabilization bilinear form is then given by
  \begin{equation}\label{eq:s0T:VEM.mixed.BFM}
    \mathrm{s}_{\vSigma,T}(\uvsigma,\uvtau)
    \eqbydef \mathrm{S}^{{\rm vem},1}_{\vSigma,T}(\PT\uvsigma-\vec{\mathfrak{s}}_T,\PT\uvtau-\vec{\mathfrak{t}}_T)_T,
  \end{equation}
  where $\vec{\mathfrak{s}}_T$ and $\vec{\mathfrak{t}}_T$ are the unique functions of $\vfS{{\rm vem},1}(\ROT_0;T)$ such that $\uvsigma=\uvIST[k,k-1,0]\vec{\mathfrak{s}}_T$ and $\uvtau=\uvIST[k,k-1,0]\vec{\mathfrak{t}}_T$.
  This stabilization essentially corresponds to penalising in a least-square sense the high-order differences \mbox{$\vlproj[\cG,T]{k-2}(\PT\uvtau-\vtau_{\cG,T})$} and \mbox{$(\PT\uvtau\SCAL\normal_{TF}-\tau_{TF})$}, $F\in\Fh[T]$.
\end{example}

\begin{example}[The Virtual Element method of~\cite{Beirao-da-Veiga.Brezzi.ea:16}]
  A different Virtual Element method in dimension $d=2$ was presented in~\cite{Beirao-da-Veiga.Brezzi.ea:16} in the context of more general elliptic problems featuring variable diffusion as well as advective and reactive terms.
  In the pure diffusion case (which, in the original notation from the reference, corresponds to $\kappa=\Id[2]$, $\vec{b}=\vec{0}$, and $\gamma=0$), the method corresponds to the choice $l=m=k$ with $k\ge 0$.
  The underlying virtual space is, this time,
  \begin{multline*}
  \vfS{{\rm vem},2}(T)\eqbydef\{
  \vec{\mathfrak{t}}_T\in\Hdiv[T]\cap\Hrot[T]\st \\
  \text{%
    $\DIV\vec{\mathfrak{t}}_T\in\Poly{k}(T)$, $\ROT\vec{\mathfrak{t}}_T\in\Poly{k-1}(T)$, and $\restrto{(\vec{\mathfrak{t}}_T\SCAL\normal_{TF})}{F}\in\Poly{k}(F)$ for all $F\in\Fh[T]$%
  }
  \}.
  \end{multline*}
  The local flux reduction map $\uvIST[k,k,k]$ defines an isomorphism from $\vfS{{\rm vem},2}$ to $\uvSigmaT[k,k,k]$, which contains the DOF defined by~\cite[Eqs. (16)--(18)]{Beirao-da-Veiga.Brezzi.ea:16}.
  The stabilization bilinear form is defined in a similar manner as in the previous example:
  Given a bilinear form $\mathrm{S}_{\vSigma,T}^{{\rm vem},2}$ on $\vfS{{\rm vem},2}(T)\times\vfS{{\rm vem},2}(T)$ with the same scaling as the $L^2(T)^d$-inner product of fluxes, we set
  \begin{equation}\label{eq:s0T:VEM.mixed.BBMR}
    \mathrm{s}_{\vSigma,T}(\uvsigma,\uvtau)
    \eqbydef \mathrm{S}^{{\rm vem},2}_{\vSigma,T}(\ST\uvsigma-\vec{\mathfrak{s}}_T,\ST\uvtau-\vec{\mathfrak{t}}_T)_T,
  \end{equation}
  where $\vec{\mathfrak{s}}_T$ and $\vec{\mathfrak{t}}_T$ are the unique functions of $\vfS{{\rm vem},2}(T)$ such that $\uvsigma=\uvIST[k,k,k]\vec{\mathfrak{s}}_T$ and $\uvtau=\uvIST[k,k,k]\vec{\mathfrak{t}}_T$.
  This stabilization essentially corresponds to penalising in a least-square sense the high-order differences \mbox{$\vlproj[\cG,T]{k-1}(\PT\uvtau-\vtau_{\cG,T})$} and \mbox{$(\ST\uvtau\SCAL\normal_{TF}-\tau_{TF})$}, $F\in\Fh[T]$.
  For further developments on $\Hdiv$- and $\Hcurl$-conforming Virtual Elements we refer to~\cite{Beirao-da-Veiga.Brezzi.ea:15}.
\end{example}


\section{A family of primal discontinuous skeletal methods}\label{sec:primal}

We introduce in this section a family of primal discontinuous skeletal methods and provide a few examples of members of this family.

\subsection{Local space}

Let a mesh element $T\in\Th$ and three polynomial degrees $k$, $l$, and $m$ as in~\eqref{eq:l.m} be fixed.
We define the following local space for the potential:
\begin{equation*}\label{eq:uUT}
  \uUT\eqbydef\UT\times\left(
  \bigtimes_{F\in\Fh[T]}\Poly{k}(F)
  \right),
\end{equation*}
where, recalling~\eqref{eq:UT}, $\UT=\Poly{l}(T)$.
The local potential reduction map $\uIUT:H^1(T)\to\uUT$ is such that, for all $v\in H^1(T)$,
\begin{equation}\label{eq:uIUT}
  \uIUT v\eqbydef (\lproj[T]{l}v, (\lproj[F]{k}v)_{F\in\Fh[T]}).
\end{equation}
We define on $\uUT$ the $H^1(T)$-like seminorm $\norm[U,T]{{\cdot}}$ such that, for all $\uv[T]\in\uUT$,
\begin{equation}\label{eq:normUT}
  \norm[U,T]{\uv[T]}^{2}\eqbydef
  \norm[T]{\GRAD v_{T}}^{2} + \sum_{F\in\Fh[T]} h_{F}^{-1}\norm[F]{v_{F}-v_{T}}^{2},
\end{equation}
and observe that, by virtue of a local Poincar\'e inequality, the map $\norm[U,T]{{\cdot}}$ defines a norm on quotient space
\begin{equation}\label{eq:uUTs}
  \uUTs\eqbydef\uUT/\uIUT\Poly{0}(T),
\end{equation}
where two  elements of $\uUT$ belong to the same equivalence class if their difference is the interpolate of a constant function over $T$.
Clearly, $\dim(\uUTs)=\dim(\uUT)-1$.

\subsection{Local gradient reconstruction}

Let $T\in\Th$.
The family of primal methods hinges on the local gradient reconstruction operator $\GT:\uUT\to\cST$ (cf.~\eqref{eq:cST}) defined such that, for all $\uv[T]\in\uUT$,
\begin{equation}\label{eq:GT}
  (\GT\uv[T],\vtau)_{T}
  = -(v_T,\DIV\vtau)_{T} + \sum_{F\in\Fh[T]} (v_F, \vtau\SCAL\normal_{TF})_{F}\qquad
  \forall\vtau\in\cST,
\end{equation}
where the right-hand side is devised so as to resemble an integration by parts formula where the role of the function represented by $\uv[T]$ inside volumetric and boundary terms is played by element- and face-based DOFs, respectively.
\begin{remark}[Polynomial degree $m$]
  The polynomial degree $m$ does not intervene in the definition~\eqref{eq:uUT} of the local space of potential DOFs.
  Its role is to determine the arrival space for the discrete gradient operator $\GT$ which, recalling~\eqref{eq:cST}, is either $\cGT[k]$ (if $m=0$) or $\Poly{k}(T)^d$ (if $m=k$).
\end{remark}
Adapting the arguments of~\cite[Lemma~3]{Di-Pietro.Ern.ea:14} (cf., in particular, Eq. (17) therein), it can be checked that the following commuting property holds: For all $v\in H^1(T)$,
\begin{equation}\label{eq:commut.GT}
  \GT\uIUT v = \vlproj[\cS,T]{k,m} \GRAD v,
\end{equation}
where $\vlproj[\cS,T]{k,m}$ denotes the $L^2$-orthogonal projector on $\cST$ and the potential reduction map $\uIUT$ is defined by~\eqref{eq:uIUT}.

\subsection{Local bilinear form}

We define, for all $T\in\Th$, the local bilinear form $\mathrm{a}_T:\uUT\times\uUT\to\Real$ as follows:
\begin{equation}\label{eq:primal.aT}
  \mathrm{a}_T(\uu[T],\uv[T])\eqbydef (\GT\uu[T],\GT\uv[T])_{T} + \mathrm{s}_{U,T}(\uu[T],\uv[T]),
\end{equation}
where, as for the bilinear form $\mathrm{m}_T$ defined by~\eqref{eq:mT}, the right-hand side is composed of a consistency and a stabilization term.
\begin{assumption}[Bilinear form $\mathrm{s}_{U,T}$]\label{ass:s1T}
  The symmetric, positive semi-definite bilinear form $\mathrm{s}_{U,T}:\uUT\times\uUT\to\Real$ satisfies the following properties:  
  \begin{enumerate}[(S1\sprime)]
  \item \emph{Stability.} It holds, for all $\uv[T]\in\uUT$, with seminorm $\norm[U,T]{{\cdot}}$ defined by~\eqref{eq:normUT},
    $$
    \norm[\mathrm{a},T]{\uv[T]}^{2}\eqbydef \mathrm{a}_T(\uv[T],\uv[T])\approx\norm[U,T]{\uv[T]}^{2}.
    $$
  \item \emph{Polynomial consistency.} For all $w\in\Poly{k+1}(T)$, with local potential reduction map $\uIUT$ defined by~\eqref{eq:uIUT},
    $$
    \mathrm{s}_{U,T}(\uIUT w,\uv[T])=0\qquad\forall\uv[T]\in\uUT.
    $$
  \end{enumerate}
\end{assumption}

\subsection{Global space and primal problem}

We define the following global spaces of potential DOFs with single-valued interface unknowns:
\begin{equation}\label{eq:uUh}
  \uUh\eqbydef\Uh\times\left(
  \bigtimes_{F\in\Fh}\Poly{k}(F)
  \right),\qquad
  \uUhD\eqbydef\left\{
  \uv\in\uUh\st v_F=0\quad\forall F\in\Fhb
  \right\},
\end{equation}
where the subspace $\uUhD$ embeds the homogeneous Dirichlet boundary condition.
For a generic DOF vector $\uv\in\uUh$ we use the notation $\uv=( (v_T)_{T\in\Th}, (v_F)_{F\in\Fh} )$, and we denote by $\uv[T]\in\uUT$ its restriction to $T$.
We also denote by $v_h\in\Poly{l}(\Th)$ the piecewise polynomial function such that $\restrto{v_h}{T}=v_T$ for all $T\in\Th$.
On $\uUh$, we define the global $H^1(\Omega)$-like seminorm $\norm[U,h]{{\cdot}}$ such that, for all $\uv\in\uUh$,
\begin{equation}\label{eq:norm.U}
  \norm[U,h]{\uv}^{2}\eqbydef\sum_{T\in\Th}\norm[U,T]{\uv[T]}^{2},
\end{equation}
with $\norm[U,T]{{\cdot}}$ given by~\eqref{eq:normUT}.
Following a reasoning analogous to that of~\cite[Proposition~5]{Di-Pietro.Ern:15*1}, it can be easily checked that the map $\norm[U,h]{{\cdot}}$ defines a norm on $\uUhD$.
We will also need the global potential reduction map $\uIUh:H^{1}(\Omega)\to\uUh$ such that, for all $v\in H^{1}(\Omega)$,
\begin{equation*}\label{eq:uIUh}
  \uIUh v = ( (\lproj[T]{l} v)_{T\in\Th}, (\lproj[F]{k} v)_{F\in\Fh}).
\end{equation*}
Clearly, the restriction of $\uIUh$ to a mesh element $T\in\Th$ coincides with the local potential reduction map defined by~\eqref{eq:uIUT}.
Also, $\uIUh$ maps elements of $H_0^1(\Omega)$ to elements of $\uUhD$.
Finally, we define the global bilinear form $\mathrm{a}_h:\uUh\times\uUh\to\Real$ by element-by-element assembly setting
\begin{equation*}\label{eq:primal.ah}
  \mathrm{a}_h(\uu,\uv)\eqbydef\sum_{T\in\Th}\mathrm{a}_T(\uu[T],\uv[T]).
\end{equation*}

\begin{problem}[Primal problem]
Find $\uu\in\uUhD$ such that
\begin{equation}\label{eq:primal.h}
  \mathrm{a}_h(\uu,\uv) = (f,v_h)\qquad\forall\uv[h]\in\uUhD.
\end{equation}
\end{problem}

\begin{remark}[Static condensation]\label{rem:stat.cond.primal}
  In the actual implementation of the method~\eqref{eq:primal.h}, element-based DOFs can be locally eliminated by static condensation.
  The procedure is essentially analogous to the one described, e.g., in~\cite[Section~2.4]{Cockburn.Di-Pietro.ea:15}, to which we refer for further details.
\end{remark}

\subsection{Examples}
  
We collect in this section a few examples of discontinuous skeletal methods originally introduced in a primal formulation which can be traced back to~\eqref{eq:primal.h}.
Each method is uniquely defined by prescribing the three polynomial degrees $k$, $l$, and $m$ (in accordance with~\eqref{eq:l.m}) and the expression of the local stabilization bilinear form $s_{U,T}$ for a generic mesh element $T\in\Th$.
A synopsis is provided in Table~\ref{tab:primal}.

\begin{table}\centering
  \begin{tabular}{cccccc}
    \toprule
    Ref. & Name & $k$ & $l$ & $m$ & $\mathrm{s}_{U,T}$ \\
    \midrule
    \cite{Eymard.Gallouet.ea:10} & Hybrid Finite Volume & $0$ & $0$ & $0$ & Eq.~\eqref{eq:s1T:HFV} \\
    \cite{Droniou.Eymard.ea:10} & Hybrid Finite Volume & $0$ & $0$ & $0$ & Eq.~\eqref{eq:s1T:HFV'} \\    
    \cite{Lehrenfeld:10} & Hybridizable Discontinuous Galerkin & $\ge 0$ & $k+1$ & $k$ & Eq.~\eqref{eq:s1T:HDG} \\
    \cite{Cockburn.Di-Pietro.ea:15} & Hybridizable Discontinuous Galerkin & $\ge 0$ & Eq.~\eqref{eq:l.m} & $k$ & Eq.~\eqref{eq:s1T:HHO} \\        
    \cite{Di-Pietro.Ern.ea:14} & Hybrid High-Order & $\ge 0$ & $k$ & $0$ & Eq.~\eqref{eq:s1T:HHO} \\
    \cite{Cockburn.Di-Pietro.ea:15} & Hybrid High-Order & $\ge 0$ & Eq.~\eqref{eq:l.m} & $0$ & Eq.~\eqref{eq:s1T:HHO} \\    
    \cite{Lipnikov.Manzini:14,Ayuso-de-Dios.Lipnikov.ea:15} & High-Order Mimetic & $\ge 0$\textsuperscript{*} & $k-1$ & $0$ & Eq.~\eqref{eq:s1T:HOM} \\
    \bottomrule
  \end{tabular}
  \caption{Examples of methods originally introduced in primal formulation. \textsuperscript{*}~The High-Order Mimetic method enters the present framework only for $k\ge 1$.\label{tab:primal}}
\end{table}

\begin{example}[The Hybrid Finite Volume method of~\cite{Eymard.Gallouet.ea:10} and its generalization of~\cite{Droniou.Eymard.ea:10}]\label{eq:HFV}
  The Hybrid Finite Volume method of~\cite[Section~2.1]{Eymard.Gallouet.ea:10} corresponds to $k=l=m=0$.
  In this case, an explicit expression for the gradient operator $\GT[0]$ defined by~\eqref{eq:GT} is available:
  For all $\uv[T]\in\uUT[0,0]$,
  \begin{equation}\label{eq:GT0}
    \GT[0]\uv[T]=\frac{1}{\meas{T}}\sum_{F\in\Fh[T]}\meas[d-1]{F}v_F\normal_{TF}.
  \end{equation}
  For every element $T\in\Th$, the stabilization bilinear form is such that
  \begin{equation}\label{eq:s1T:HFV}
    \mathrm{s}_{U,T}(\uu[T],\uv[T])
    = \sum_{F\in\Fh[T]}\meas[d-1]{F}\frac{\eta}{\mathfrak{h}_{TF}}\deTF[0]\uu[T]\deTF[0]\uv[T],
  \end{equation}
  where $\eta>0$ is a user-dependent stabilization parameter, $\mathfrak{h}_{TF}$ as in Example~\ref{ex:dga} and the face-based residual operator $\deTF[0]:\uUT[0,0]\to\Poly{0}(F)$ is such that, denoting by $\vec{x}_F$ the barycenter of $F$ and by $\vec{x}_T$ an arbitrary point associated with $T$ which may or may not belong to $T$,
  \begin{equation}\label{eq:deltaTF0}
    \deTF[0]\uv[T] \eqbydef v_T + \GT[0]\uv[T]\SCAL(\vec{x}_F-\vec{x}_T) - v_F.
  \end{equation}
  In~\cite[Section~2.2]{Droniou.Eymard.ea:10}, the following generalization of~\eqref{eq:s1T:HFV} is proposed:
  For a given positive definite matrix $\matr{B}^T=(B_{FF^\prime}^T)_{F,F^\prime\in\Fh[T]}$,
  \begin{equation}\label{eq:s1T:HFV'}
    \mathrm{s}_{U,T}(\uu[T],\uv[T])
    = \sum_{F\in\Fh[T]}\sum_{F^\prime\in\Fh[T]} \deTF[0]\uu[T] B_{FF^\prime}^T\delta_{TF^\prime}^0\uv[T].
  \end{equation}
\end{example}

\begin{example}[The Hybrid High-Order method of~\cite{Di-Pietro.Ern.ea:14} and the variants of~\cite{Cockburn.Di-Pietro.ea:15}]\label{ex:HHO}
  The original Hybrid High-Order method of~\cite{Di-Pietro.Ern.ea:14} corresponds to the choice $l=k$ and $m=0$. 
  In~\cite{Cockburn.Di-Pietro.ea:15}, variants corresponding to $l=k-1$ (when $k\ge 1$) and $l=k+1$ have also been proposed.
  Let an element $T\in\Th$ be fixed, and define the potential reconstruction operator $\pT:\uUT\to\Poly{k+1}(T)$ such that, for all $\uv[T]\in\uUT$,
  \begin{equation}\label{eq:pT}
    \text{$\GRAD\pT\uv[T]=\GT\uv[T]$\quad and\quad $(\pT\uv[T]-v_T,1)_T=0$.}
  \end{equation}
  Note that the first condition makes sense since, having supposed $m=0$, $\GT\uv[T]\in\cGT$.
  The stabilization bilinear form is defined as follows:
  \begin{equation}\label{eq:s1T:HHO}
    \mathrm{s}_{U,T}(\uu[T],\uv[T]) = \sum_{F\in\Fh[T]} h_F^{-1}(\delta_{TF}^{k}\uu[T],\delta_{TF}^{k}\uv[T])_F,
  \end{equation}
  where, for all $F\in\Fh[T]$, the face-based residual operator $\deTF:\uUT\to\Poly{k}(F)$ is such that, for all $\uv[T]\in\uUT$,
  \begin{equation}\label{eq:deltaTFkl}
    \deTF\uv[T]=\lproj[F]{k}\left(\pT\uv[T] - v_F - \lproj[T]{l}(\pT\uv[T] - v_T)\right).
  \end{equation}
  As already observed in~\cite[Section~2.5]{Di-Pietro.Ern.ea:14}, in the lowest-order case $k=0$ the face-based residuals defined by~\eqref{eq:deltaTF0} and~\eqref{eq:deltaTFkl} coincide, and the stabilization~\eqref{eq:s1T:HHO} can be recovered from~\eqref{eq:s1T:HFV'} selecting $\matr{B}^T={\rm diag}(h_F^{-1}\meas[d-1]{F})_{F\in\Fh[T]}$ (the only difference with respect to \eqref{eq:s1T:HFV} is the change of local scaling $\mathfrak{h}_{TF}\gets h_F$).
\end{example}

\begin{example}[The Crouzeix--Raviart finite element]\label{ex:NC}
  Let $T$ be an element belonging to a matching simplicial mesh $\Th$ with barycenter $\vec{x}_T$, and consider the Crouzeix--Raviart element $\NC(T)$ of~\cite{Crouzeix.Raviart:73}.
  We study the solution of problem~\eqref{eq:primal.h} using the Hybrid Finite Volume method of Example~\ref{eq:HFV} (or, equivalently, the Hybrid High-Order method of Example~\ref{ex:HHO} with $k=l=m=0$) but with right-hand side discretized as
  \begin{equation}\label{eq:rhs.CR}
    \sum_{T\in\Th}(f,\pT[1]\uv[T])_T,
  \end{equation}
  where the potential reconstruction $\pT[1]$ is defined according to~\eqref{eq:pT} but with average value on $T$ set to $\frac1{d+1}\sum_{F\in\Fh[T]} v_F$ (here, $\mathfrak{h}_{TF}$ is the orthogonal distance of $\vec{x}_T$ from $F$).
  We start by noticing that it holds $\lproj[F]{0}\pT[1]\uv[T] = \pT[1]\uv[T](\vec{x}_F) = v_F$ for all $\uv[T]\in\uUT$ and all $F\in\Fh[T]$ with $\vec{x}_F$ barycenter of $F$.
  As a consequence, for the face-based residual operator~\eqref{eq:deltaTF0} it holds for all $\uv[T]\in\uUT[0,0]$ that
  $$
  \deTF[0]\uv[T] = -\lproj[T]{0}(\pT[1]\uv[T] - v_T) = v_T - \pT[1]\uv[T](\vec{x}_T).
  $$
  Then, observing that element-based DOFs do not contribute to the consistency term in~\eqref{eq:primal.aT} nor to the right-hand side, we infer that the stabilization term is actually enforcing the condition $\pT[1]\uv[T](\vec{x}_T) = v_T$ for all $T\in\Th$.
  As a result, denoting by $\uu[h]\in\uUhD[0,0]$ the solution of problem~\eqref{eq:primal.h} with right-hand side modified as in~\eqref{eq:rhs.CR}, the piecewise affine field equal to $\pT[1]\uu[T]$ inside each mesh element $T\in\Th$ coincides with the Crouzeix--Raviart solution~\eqref{eq:nonc}.
\end{example}

\begin{example}[The Hybridizable Discontinuous Galerkin method of~\cite{Lehrenfeld:10} and the variants of~\cite{Cockburn.Di-Pietro.ea:15}]
  The Hybridizable Discontinuous Galerkin originally proposed in~\cite[Remark~1.2.4]{Lehrenfeld:10} corresponds to the case $l=k+1$ and $m=k$ and stabilization
  \begin{equation}\label{eq:s1T:HDG}
    \mathrm{s}_{U,T}(\uu[T],\uv[T])
    = \sum_{F\in\Fh[T]}h_F^{-1}(\lproj[F]{k}(u_T-u_F), \lproj[F]{k}(v_T-v_F))_F.
  \end{equation}
  As pointed out in~\cite[Remark~2]{Cockburn.Di-Pietro.ea:15}, this stabilization coincides with~\eqref{eq:s1T:HHO} when $l=k+1$.
  Motivated by this remark, variants corresponding to the choices $l=k-1$ (when $k\ge 1$) and $l=k$ and $m=k$ are proposed therein.
  It is worth noting here that the original Hybridizable Discontinuous Galerkin method of~\cite{Castillo.Cockburn.ea:00,Cockburn.Gopalakrishnan.ea:09} does not fit in the present framework since the corresponding stabilization bilinear form is only polynomially consistent up to degree $k$, i.e., it does not satisfy (S2\sprime).
  Correspondingly, the  orders of convergence are reduced (cf. \cite[Table 1]{Cockburn.Di-Pietro.ea:15} for further details).
\end{example}

\begin{example}[The High-Order Mimetic method of~\cite{Lipnikov.Manzini:14,Ayuso-de-Dios.Lipnikov.ea:15}]
  The High-Order Mimetic method of~\cite{Lipnikov.Manzini:14} (subsequently referred to as Nonconforming Virtual Element method in~\cite{Ayuso-de-Dios.Lipnikov.ea:15}) provides a high-order generalization of the concepts underlying Mimetic Difference Methods (cf., e.g.,~\cite{Beirao-da-Veiga.Lipnikov.ea:14}).
  Its lowest-order version, corresponding to the case $k=0$ and $l=-1$, violates~\eqref{eq:l.m}, and therefore does not enter our unified framework.
  For $k\ge 1$, on the other hand, it corresponds to the choices $l=k-1$ and $m=0$.
  To write the corresponding bilinear form, define the finite-dimensional local virtual space
  $$
  \mathfrak{U}^k(T)\eqbydef\left\{
  \text{%
    $\mathfrak{v}_T\in H^1(T)\st\LAPL\mathfrak{v}_T\in\Poly{k-1}(T)$ and $\restrto{(\GRAD\mathfrak{v}_T)}{F}\SCAL\normal_{TF}\in\Poly{k}(F)$ for all $F\in\Fh[T]$%
  }
  \right\}.
  $$
  Clearly, $\Poly{k+1}(T)\subset\mathfrak{U}^k(T)$, and it can be proved that $\uIUT[k,k-1]$ defines an isomorphism from $\mathfrak{U}^k(T)$ to $\uUT[k,k-1]$.
  Denote by $\mathrm{S}_T^{\rm hom}:\mathfrak{U}^k(T)\times\mathfrak{U}^k(T)\to\Real$ a bilinear form whose representation in the canonical basis of $\mathfrak{U}^k(T)$ is spectrally equivalent to the unit matrix.
  The stabilization bilinear form is obtained setting, for all $\uu[T],\uv[T]\in\uUT$,
  \begin{equation}\label{eq:s1T:HOM}
    \mathrm{s}_{U,T}(\uu[T],\uv[T])
    \eqbydef h^{d-2}_T\mathrm{S}_T^{\rm hom}(\pT\uu[T] - \mathfrak{u}_T, \pT\uv[T] - \mathfrak{v}_T),
  \end{equation}
  where $\mathfrak{u}_T$ and $\mathfrak{v}_T$ are the unique functions in $\mathfrak{U}^k(T)$ such that $\uu[T]=\uIUT[k,k-1]\mathfrak{u}_T$ and $\uv[T]=\uIUT[k,k-1]\mathfrak{v}_T$, while the operator $\pT$ is defined by~\eqref{eq:pT}.
  The stabilization~\eqref{eq:s1T:HOM} essentially corresponds to penalizing in a least-square sense the high-order differences \mbox{$\lproj[T]{l}(\pT\uv[T] - v_T)$} and \mbox{$\lproj[F]{k}(\pT\uv[T] - v_F)$}, $F\in\Fh[T]$, with scaling factor choosen so that the uniform equivalence in (S1\sprime) holds.
\end{example}


\section{From mixed to primal methods}\label{sec:m->p}

In this section we obtain from~\eqref{eq:mixed.h} an equivalent primal problem by hybridization.
The primal hybrid problem is then shown to belong to the family~\eqref{eq:primal.h} of primal discontinuous skeletal methods.

\subsection{Mixed hybrid formulation of mixed methods}

We define the bilinear form $\mathrm{b}_h:\uvcSigmah\times\uUh\to\Real$ (with spaces $\uvcSigmah$ and $\uUh$ defined by~\eqref{eq:uvSigmah} and~\eqref{eq:uUh}, respectively) such that, for all $(\uvtau[h],\uv)\in\uvcSigmah\times\uUh$,
\begin{equation}\label{eq:bT.bh}
  \mathrm{b}_h(\uvtau[h],\uv)\eqbydef\sum_{T\in\Th}\mathrm{b}_T(\uvtau[T],\uv[T]),\qquad
  \mathrm{b}_T(\uvtau,\uv[T])\eqbydef (\DT\uvtau,v_T)_T - \sum_{F\in\Fh[T]}(\tau_{TF},v_F)_F.
\end{equation}
For further use, we note that it holds for all $T\in\Th$, all $\uvtau\in\uvSigmaT$, and all $\uv[T]\in\uUT$,
\begin{equation}\label{eq:bT'}
  \mathrm{b}_T(\uvtau,\uv[T])
  = -(\vtau_{\cG,T},\GRAD v_T)_T + \sum_{F\in\Fh[T]}(\tau_{TF}, v_T - v_F)_F,
\end{equation}
as can be easily checked replacing $\DT$ by its definition~\eqref{eq:DT} and accounting for Remark~\ref{rem:dep.DT.FT}.
Hence, using the Cauchy--Schwarz inequality and recalling the definitions~\eqref{eq:norm.vSigmaT} and~\eqref{eq:normUT} of $\norm[\vSigma,T]{{\cdot}}$ and $\norm[U,T]{{\cdot}}$, we infer the following boundedness result for $\mathrm{b}_T$:
\begin{equation}\label{eq:bT.cont}
  |\mathrm{b}_T(\uvtau,\uv[T])|\le\norm[\vSigma,T]{\uvtau}\norm[U,T]{\uv[T]}.
\end{equation}
\begin{problem}[Mixed hybrid problem]
  Find $(\uvsigma[h],\uu)\in\uvcSigmah\times\uUhD$ such that, 
  \begin{subequations}\label{eq:mixed.hyb.h}
    \begin{alignat}{2}
      \label{eq:mixed.hyb.h:1}
      \forall T\in\Th,\qquad
      \mathrm{m}_T(\uvsigma,\uvtau) + \mathrm{b}_T(\uvtau,\uu[T]) &= 0
      &\qquad&\forall\uvtau\in\uvSigmaT,
      \\
      \label{eq:mixed.hyb.h:2}
      -\mathrm{b}_h(\uvsigma[h],\uv) &= (f,v_h)
      &\qquad&\forall\uv\in\uUhD.
    \end{alignat}
  \end{subequations}
\end{problem}
Compared to the mixed problem~\eqref{eq:mixed.h}, the single-valuedness of interface flux unknowns is enforced here by Lagrange multipliers (corresponding to the skeletal DOFs in $\uUhD$) instead of being embedded in the discrete space.
Equation~\eqref{eq:mixed.hyb.h:1} defines a set of local constitutive relations connecting flux to potential DOFs inside each mesh element.
Equation~\eqref{eq:mixed.hyb.h:2}, on the other hand, expresses local balances and a global transmission condition.
In what follows, we will eliminate flux unknowns by locally inverting~\eqref{eq:mixed.hyb.h:1}, ending up with a problem in the hybrid potential unknowns only.

\subsection{Mixed-to-primal potential-to-flux operator}

For all $T\in\Th$, we define the local mixed-to-primal potential-to-flux operator $\vsT:\uUT\to\uvSigmaT$ such that, for all $\uv[T]\in\uUT$,
\begin{equation}\label{eq:vsT}
  \mathrm{m}_T(\vsT\uv[T],\uvtau) = -\mathrm{b}_T(\uvtau,\uv[T])\qquad
  \forall\uvtau\in\uvSigmaT.
\end{equation}
Recalling the reformulation~\eqref{eq:bT'} of $\mathrm{b}_T$,~\eqref{eq:vsT} equivalently rewrites
\begin{equation}\label{eq:vsT'}
  \mathrm{m}_T(\vsT\uv[T],\uvtau)
  = (\GRAD v_T,\vtau_{\cG,T})_T + \sum_{F\in\Fh[T]}(v_F-v_T,\tau_{TF})_F\qquad
  \forall\uvtau\in\uvSigmaT.
\end{equation}
We next state some useful properties for the potential-to-flux operator.
\begin{lemma}[Properties of the mixed-to-primal potential-to-flux operator]\label{lem:vsT}
  Let a mesh element $T\in\Th$ be given and let $\mathrm{s}_{\vSigma,T}$ be a bilinear form satisfying Assumption~\ref{ass:s0T}.
  Then, the corresponding potential-to-flux operator $\vsT$ given by~\eqref{eq:vsT} is well defined and has the following properties:
  \begin{enumerate}[1)]
  \item \emph{Stability and continuity.} For all $\uv[T]\in\uUT$, it holds 
    \begin{equation}\label{eq:vsT.cont}
      \norm[\vSigma,T]{\vsT\uv[T]}\approx\norm[U,T]{\uv[T]},
    \end{equation}
    with norms $\norm[\vSigma,T]{{\cdot}}$ and $\norm[U,T]{{\cdot}}$ defined by~\eqref{eq:norm.vSigmaT} and~\eqref{eq:normUT}, respectively.
  \item \emph{Commuting property.} For all $w\in\Poly{k+1}(T)$, we have
    \begin{equation}\label{eq:vsT.commuting}
      \vsT\uIUT w = \uvIST\GRAD w.
    \end{equation}
  \item \emph{Link with the discrete gradient operator.} It holds, with operators $\GT$ and $\ST$ defined by~\eqref{eq:GT} and~\eqref{eq:ST}, respectively, that
    \begin{equation}\label{eq:char.GT}
      \GT\eqbydef\ST\circ\vsT.
    \end{equation}  
  \end{enumerate}
\end{lemma}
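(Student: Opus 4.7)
The plan rests on the observation that, by property (S1), the bilinear form $\mathrm{m}_T$ is an inner product on $\uvSigmaT$ whose induced norm $\norm[\mathrm{m},T]{{\cdot}}$ is uniformly equivalent to $\norm[\vSigma,T]{{\cdot}}$. Well-posedness of $\vsT$ is then immediate by Riesz representation applied to the linear form $\uvtau \mapsto -\mathrm{b}_T(\uvtau,\uv[T])$, whose boundedness on $\uvSigmaT$ follows from~\eqref{eq:bT.cont}.

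For the stability and continuity in~\eqref{eq:vsT.cont}, the upper bound $\norm[\vSigma,T]{\vsT\uv[T]} \lesssim \norm[U,T]{\uv[T]}$ is obtained by testing~\eqref{eq:vsT} against $\uvtau = \vsT\uv[T]$, invoking (S1) to pass from $\mathrm{m}_T(\vsT\uv[T],\vsT\uv[T])$ to $\norm[\vSigma,T]{\vsT\uv[T]}^2$ on the left, and using the boundedness~\eqref{eq:bT.cont} on the right. The reverse bound is the main technical obstacle. I would construct a tailored test $\uvtau \in \uvSigmaT$ by setting $\vtau_{\cG,T} \eqbydef \GRAD v_T$ (which belongs to $\cGT[l-1]$ since $v_T \in \Poly{l}(T)$), $\vtau_{\coG,T} \eqbydef 0$, and $\tau_{TF} \eqbydef h_F^{-1}(v_F - \lproj[F]{k}v_T)$ for each $F \in \Fh[T]$. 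Plugging this test into the equivalent form~\eqref{eq:vsT'} of the defining equation and using that $v_F - \lproj[F]{k}v_T \in \Poly{k}(F)$ together with the $L^2$-orthogonality property of $\lproj[F]{k}$ to absorb $v_T - \lproj[F]{k}v_T$, one obtains
\begin{equation*}
  \mathrm{m}_T(\vsT\uv[T],\uvtau) = \norm[T]{\GRAD v_T}^2 + \sum_{F \in \Fh[T]} h_F^{-1}\norm[F]{v_F - \lproj[F]{k}v_T}^2.
\end{equation*}
To compare this with $\norm[U,T]{\uv[T]}^2$, write $v_F - v_T = (v_F - \lproj[F]{k}v_T) + (\lproj[F]{k}v_T - v_T)$ and control $h_F^{-1}\norm[F]{\lproj[F]{k}v_T - v_T}^2$ by $\norm[T]{\GRAD v_T}^2$ through standard approximation estimates for $\lproj[F]{k}$ applied to the polynomial $v_T \in \Poly{l}(T)$ with $l \le k+1$, combined with the discrete trace inequality available on admissible meshes. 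The Cauchy--Schwarz inequality, together with the upper bound on $\mathrm{m}_T$, then closes the argument.

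For the commuting property~\eqref{eq:vsT.commuting}, I would verify that $\uvIST\GRAD w$ satisfies the defining equation of $\vsT\uIUT w$; uniqueness, again a consequence of (S1), yields the identity. Since $\GRAD w \in \cGT[k] \subset \cST$, the polynomial consistency~\eqref{eq:cons.ST} gives $\ST\uvIST\GRAD w = \GRAD w$, while (S2) kills the stabilization contribution, so that $\mathrm{m}_T(\uvIST\GRAD w,\uvtau) = (\GRAD w, \ST\uvtau)_T$. Using the $L^2(T)^d$-orthogonality between $\coGT[m]$ and $\cGT[k] \ni \GRAD w$ (trivial when $m=0$, from the definition of $\coGT$ when $m=k$), this equals $(\GRAD w, \PT\uvtau)_T$. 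Inserting the $L^2$-projectors through the identities $(\DT\uvtau, w)_T = (\DT\uvtau, \lproj[T]{l}w)_T$ and $(\tau_{TF}, w)_F = (\tau_{TF}, \lproj[F]{k}w)_F$ (valid since $\DT\uvtau \in \Poly{l}(T)$ and $\tau_{TF} \in \Poly{k}(F)$), the definition~\eqref{eq:PT} of $\PT$ rewrites the latter as $-\mathrm{b}_T(\uvtau,\uIUT w)$, as required.

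The identity~\eqref{eq:char.GT} is established componentwise in $\cST$. Given any $\vtau \in \cST$, set $\uvchi \eqbydef \uvIST\vtau$, so that~\eqref{eq:cons.ST} yields $\ST\uvchi = \vtau$ while (S2), combined with the symmetry of $\mathrm{s}_{\vSigma,T}$, gives $\mathrm{s}_{\vSigma,T}(\vsT\uv[T],\uvchi) = 0$. Hence $(\ST\vsT\uv[T],\vtau)_T = (\ST\vsT\uv[T],\ST\uvchi)_T = \mathrm{m}_T(\vsT\uv[T],\uvchi) = -\mathrm{b}_T(\uvchi,\uv[T])$. Expanding the right-hand side via the definition~\eqref{eq:bT.bh} of $\mathrm{b}_T$, substituting $\DT\uvchi = \lproj[T]{l}(\DIV\vtau)$ by the commuting property~\eqref{eq:commut.DT} and $\chi_{TF} = \lproj[F]{k}(\vtau \SCAL \normal_{TF})$ by the definition~\eqref{eq:uvIST} of $\uvIST$, and finally dropping the projectors through $L^2$-orthogonality against $v_T \in \Poly{l}(T)$ and $v_F \in \Poly{k}(F)$, one recovers exactly $(\GT\uv[T],\vtau)_T$ from~\eqref{eq:GT}. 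Since both $\GT\uv[T]$ and $\ST\vsT\uv[T]$ lie in $\cST$, the identity~\eqref{eq:char.GT} follows by choosing $\vtau = \GT\uv[T] - \ST\vsT\uv[T]$.
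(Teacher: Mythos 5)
Your proof is correct, and its overall architecture is the paper's: well-posedness from the (S1)-coercivity of $\mathrm{m}_T$, a tailored test vector for the lower stability bound, an (S1)-uniqueness argument for the commuting property (the paper computes $\mathrm{m}_T(\vsT\uIUT w,\uvtau)$ and $\mathrm{m}_T(\uvIST\GRAD w,\uvtau)$ separately, shows both equal $(\GRAD w,\PT\uvtau)_T$, and subtracts --- the same computation you run in a different order), and, for~\eqref{eq:char.GT}, testing against $\uvIST\vtau$ with $\vtau\in\cST$ using~\eqref{eq:cons.ST}, (S2), and~\eqref{eq:commut.DT} exactly as the paper does. The one genuine divergence is the test vector in part 1): the paper takes $\vtau_T=\GRAD v_T$ and $\tau_{TF}=h_F^{-1}(v_F-v_T)$, which yields the exact identities $\mathrm{m}_T(\vsT\uv[T],\uvtau)=\norm[U,T]{\uv[T]}^2$ and $\norm[\vSigma,T]{\uvtau}=\norm[U,T]{\uv[T]}$ with no further estimates, but this $\uvtau$ lies in $\uvSigmaT$ only when $\restrto{v_T}{F}\in\Poly{k}(F)$, i.e., for $l\le k$; when $l=k+1$ the face component must implicitly be projected onto $\Poly{k}(F)$. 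Your choice $\tau_{TF}=h_F^{-1}(v_F-\lproj[F]{k}v_T)$ is admissible for every $l$ allowed by~\eqref{eq:l.m}, at the price of two standard extra ingredients, both valid on admissible meshes: the orthogonality of $\lproj[F]{k}$, which turns $(v_F-v_T,\tau_{TF})_F$ into $h_F^{-1}\norm[F]{v_F-\lproj[F]{k}v_T}^2$, and the bound $h_F^{-1}\norm[F]{v_T-\lproj[F]{k}v_T}^2\lesssim\norm[T]{\GRAD v_T}^2$, obtained by comparing $\lproj[F]{k}v_T$ with a constant on $F$ and applying a discrete trace inequality to the polynomial $\GRAD v_T$. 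Net effect: your variant is slightly longer but rigorous uniformly in $l$, covering the borderline case $l=k+1$ that the paper's shorter choice glosses over.
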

\begin{proof}
  Problem~\eqref{eq:vsT} is well-posed owing to assumption (S1) expressing the coercivity of $\mathrm{m}_T$. As a result, $\vsT$ is well defined.
    \begin{asparaenum}[1)]
    \item \emph{Stability and continuity.} Using (S1) followed by the definition~\eqref{eq:vsT} of $\vsT$ and the boundedness~\eqref{eq:bT.cont} of $\mathrm{b}_T$, we infer, for all $\uv[T]\in\uUT$,
      \begin{equation}\label{eq:vsT.cont:1}
        \norm[\vSigma,T]{\vsT\uv[T]}^2
        \lesssim\norm[\mathrm{m},T]{\vsT\uv[T]}^2
        = -\mathrm{b}_T(\vsT\uv[T],\uv[T])
        \le\norm[\vSigma,T]{\vsT\uv[T]}\norm[U,T]{\uv[T]}.
      \end{equation}
      To prove the converse inequality, let $\uvtau\in\uvSigmaT$ in~\eqref{eq:vsT'} be such that $\vtau_T=\GRAD v_T$ and $\tau_{TF}=h_F^{-1}(v_F-v_T)$ for all $F\in\Fh[T]$, and observe that
      \begin{equation}\label{eq:vsT.cont:2}
        \norm[U,T]{\uv[T]}^2
        =\mathrm{m}_T(\vsT\uv[T],\uvtau)
        \lesssim\norm[\vSigma,T]{\vsT\uv[T]}\norm[\vSigma,T]{\uvtau}
        =\norm[\vSigma,T]{\vsT\uv[T]}\norm[U,T]{\uv[T]},
      \end{equation}
      where we have used the Cauchy--Schwarz inequality together with (S1) to bound $\mathrm{m}_T$ and the definitions~\eqref{eq:norm.vSigmaT} of $\norm[\vSigma,T]{{\cdot}}$ and~\eqref{eq:norm.U} of $\norm[U,T]{{\cdot}}$ to infer $\norm[\vSigma,T]{\uvtau}=\norm[U,T]{\uv[T]}$ and conclude.    
    \item \emph{Commuting property.} Let $w\in\Poly{k+1}(T)$.
      Using the definition~\eqref{eq:vsT} of $\vsT$ with $\uv[T]=\uIUT w$ and recalling~\eqref{eq:bT.bh}, we infer, for all $\uvtau\in\uvSigmaT$,
      \begin{equation}\label{eq:char.vsT:1}
        \begin{aligned}
          \mathrm{m}_T(\vsT\uIUT w,\uvtau)
          &=
          -(\lproj[T]{l} w, \DT\uvtau)_T + \sum_{F\in\Fh[T]}(\lproj[F]{k}w, \tau_{TF})_F
          \\
          &=
          -(w, \DT\uvtau)_T + \sum_{F\in\Fh[T]}(w, \tau_{TF})_F
          = (\GRAD w, \PT\uvtau)_T,
        \end{aligned}
      \end{equation}
      where we have used the definitions~\eqref{eq:lproj} of $\lproj[T]{l}$ and $\lproj[F]{k}$ to pass to the second line and the definition~\eqref{eq:PT} of $\PT$ to conclude.
      On the other hand, using the definition~\eqref{eq:mT} of $\mathrm{m}_T$ followed by the polynomial consistency~\eqref{eq:cons.ST} of $\ST$ together with (S2), for all $\uvtau\in\uvSigmaT$ we have that
      \begin{equation}\label{eq:char.vsT:2}
        \begin{aligned}
          \mathrm{m}_T(\uvIST\GRAD w,\uvtau)
          &= (\ST\uvIST\GRAD w, \ST\uvtau)_T + \mathrm{s}_{\vSigma,T}(\uvIST\GRAD w,\uvtau)
          \\
          &= (\GRAD w, \ST\uvtau)_T
          = (\GRAD w, \PT\uvtau)_T,
        \end{aligned}
      \end{equation}
      where the last equality follows from the definition~\eqref{eq:ST} of $\ST$ together with the orthogonal decomposition~\eqref{eq:decomp.PTs}.
      Subtracting~\eqref{eq:char.vsT:2} from~\eqref{eq:char.vsT:1}, we infer, for all $\uvtau\in\uvSigmaT$,
      $$
      \mathrm{m}_T(\vsT\uIUT w-\uvIST\GRAD w,\uvtau) = 0,
      $$
      from which~\eqref{eq:vsT.commuting} follows since $\mathrm{m}_T$ is coercive on $\uvSigmaT$ owing to (S1).\qedhere

    \item \emph{Link with the discrete gradient operator.}
      Let $\uv[T]\in\uUT$, $\vtau\in\cST$, and set $\uvtau\eqbydef\uvIST\vtau$.
      Recalling the definition~\eqref{eq:mT} of $\mathrm{m}_T$, and using the polynomial consistency~\eqref{eq:cons.ST} of $\ST$ together with (S2), it is readily inferred that
      \begin{equation}\label{eq:char.GT:1}
        \mathrm{m}_T(\vsT\uv[T],\uvtau) = ((\ST\circ\vsT)\uv[T],\vtau)_{T}.
      \end{equation}
      On the other hand, recalling the definitions~\eqref{eq:uvIST} of $\uvIST$ and~\eqref{eq:bT.bh} of $\mathrm{b}_T$, we get
      \begin{equation}\label{eq:char.GT:2}
        \begin{aligned}
          \mathrm{b}_T(\uvtau,\uv[T])
          &= (v_{T}, \DT\uvtau)_{T} - \sum_{F\in\Fh[T]} (v_{F}, \tau_{TF})_{F}
          \\
          &= (v_{T}, \lproj[T]{l}(\DIV\vtau))_{T} - \sum_{F\in\Fh[T]}(v_{F}, \lproj[F]{k}(\vtau\SCAL\normal_{TF}))_{F}
          \\
          &= (v_{T}, \DIV\vtau)_{T} - \sum_{F\in\Fh[T]}(v_{F}, \vtau\SCAL\normal_{TF})_{F}
          = -(\GT\uv,\vtau)_{T},
        \end{aligned}
      \end{equation}
      where we have used the commuting property~\eqref{eq:commut.DT} of $\DT$ in the second line and the definition~\eqref{eq:lproj} of $\lproj[T]{l}$ and $\lproj[F]{k}$ and~\eqref{eq:GT} of $\GT$ in the third.
      To conclude, plug~\eqref{eq:char.GT:1} and~\eqref{eq:char.GT:2} into the definition~\eqref{eq:vsT} of $\vsT$.\qedhere
    \end{asparaenum}
\end{proof}
\subsection{Equivalent primal formulations of mixed methods}

We start by showing a link among problems~\eqref{eq:mixed.h},~\eqref{eq:mixed.hyb.h}, and the following
\begin{problem}[Primal hybrid problem]
  Find $(\uvsigma[h],\uu)\in\uvcSigmah\times\uUhD$ such that
  \begin{subequations}\label{eq:hybrid.h}
    \begin{equation}\label{eq:hybrid.h:1}
      \uvsigma[T]=\vsT\uu[T]\qquad\forall T\in\Th,
    \end{equation}
    with potential-to-flux operator $\vsT$ defined by~\eqref{eq:vsT} and $\uu[h]$ solution of
    \begin{equation}\label{eq:hybrid.h:2}
      \mathrm{a}_h(\uu,\uv) = (f,v_h) \qquad \forall\uv\in\uUhD,
    \end{equation}
  \end{subequations}
  where the bilinear form $\mathrm{a}_h$ on $\uUh\times\uUh$ is such that
  \begin{equation}\label{eq:hybrid.ah}
    \mathrm{a}_h(\uu,\uv)
    \eqbydef\sum_{T\in\Th} \mathrm{a}_T(\uu[T],\uv[T]),\qquad
    \mathrm{a}_T(\uu[T],\uv[T])\eqbydef\mathrm{m}_T(\vsT\uu[T],\vsT\uv[T]).
  \end{equation}
\end{problem}
The well-posedness of~\eqref{eq:hybrid.h:2} is an immediate consequence of point 1) in Theorem~\ref{thm:equivalence:m->p} below.
\begin{theorem}[Link among the mixed, mixed hybrid and primal hybrid problems]\label{thm:hybridization}
  For all $T\in\Th$, let $\mathrm{s}_{\vSigma,T}$ satisfy Assumption~\ref{ass:s0T}.
  Let $(\uvsigma[h],\uu)\in\uvcSigmah\times\uUhD$, and let $u_h\in\Uh$ be such that $\restrto{u_h}{T}=u_T$ for all $T\in\Th$.
  Then, the following statements are equivalent:
    \begin{enumerate}[(i)]
    \item $(\uvsigma[h],\uu)$ solves the mixed hybrid problem~\eqref{eq:mixed.hyb.h};
    \item $\uvsigma[h]\in\uvSigmah$ and $(\uvsigma[h],u_h)$ solves the mixed problem~\eqref{eq:mixed.h};
    \item $(\uvsigma[h],\uu)$ solves the primal hybrid problem~\eqref{eq:hybrid.h}.
    \end{enumerate}
\end{theorem}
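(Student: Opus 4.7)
The equivalence splits into two cleaner parts: (i) $\Leftrightarrow$ (iii), proved through the potential-to-flux operator $\vsT$, and (i) $\Leftrightarrow$ (ii), proved by unpacking the Lagrange-multiplier role of the face DOFs of $\uu$.

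For (i) $\Leftrightarrow$ (iii), coercivity of $\mathrm{m}_T$ (a consequence of (S1)) makes the defining relation~\eqref{eq:vsT} of $\vsT$ locally well posed, so~\eqref{eq:mixed.hyb.h:1} is equivalent to $\uvsigma=\vsT\uu[T]$ for every $T\in\Th$, which is precisely~\eqref{eq:hybrid.h:1}. Substituting this identification into~\eqref{eq:mixed.hyb.h:2}, applying~\eqref{eq:vsT} once more with $\uvtau=\vsT\uv[T]$, and invoking the symmetry of $\mathrm{m}_T$ together with the definition~\eqref{eq:hybrid.ah} of $\mathrm{a}_h$, I obtain
\[
-\mathrm{b}_h(\uvsigma[h],\uv)
= -\sum_{T\in\Th}\mathrm{b}_T(\vsT\uu[T],\uv[T])
= \sum_{T\in\Th}\mathrm{m}_T(\vsT\uv[T],\vsT\uu[T])
= \mathrm{a}_h(\uu,\uv),
\]
so~\eqref{eq:mixed.hyb.h:2} becomes~\eqref{eq:hybrid.h:2}. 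All steps are reversible, yielding the equivalence.

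For (i) $\Rightarrow$ (ii), I will extract the content of~\eqref{eq:mixed.hyb.h:2} by testing with DOF vectors in $\uUhD$ of two kinds. Taking $\uv$ with $v_T=0$ for all $T\in\Th$ and $v_F$ supported on a single interface $F\in\Fhi$, the definition~\eqref{eq:bT.bh} of $\mathrm{b}_h$ yields $\sum_{T\in\Th[F]}(\sigma_{TF},v_F)_F = 0$ for every $v_F\in\Poly{k}(F)$; combined with the boundary condition embedded in $\uUhD$, this proves $\uvsigma[h]\in\uvSigmah$. Taking $\uv$ with only cell components nonzero gives~\eqref{eq:mixed.h:2}. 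Finally, summing~\eqref{eq:mixed.hyb.h:1} over $T\in\Th$ against $\uvtau[h]\in\uvSigmah$, the face contributions $\sum_{T\in\Th}\sum_{F\in\Fh[T]}(\tau_{TF},u_F)_F$ vanish on regrouping by face (single-valuedness of $\uvtau[h]$ on interfaces, $u_F=0$ on boundary), leaving~\eqref{eq:mixed.h:1}.

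The converse (ii) $\Rightarrow$ (i) is where I expect the main obstacle. Equation~\eqref{eq:mixed.hyb.h:2} comes back for free: using $\uvsigma[h]\in\uvSigmah$ and $\uv\in\uUhD$ to kill the face terms in $\mathrm{b}_h(\uvsigma[h],\uv)$ reduces it to~\eqref{eq:mixed.h:2}. The delicate point is the local equations~\eqref{eq:mixed.hyb.h:1}, which must hold for the specific face DOFs $u_F$ of the given $\uu$. The cleanest route is to go through (iii): the bilinear form $\mathrm{a}_h$ is coercive on $\uUhD$ (via the chain $\norm[U,T]{\uv[T]}\approx\norm[\vSigma,T]{\vsT\uv[T]}\approx\norm[\mathrm{m},T]{\vsT\uv[T]}$ coming from~\eqref{eq:vsT.cont} and (S1)), so~\eqref{eq:hybrid.h:2} admits a unique solution, which by $u_T=u_h|_T$ must agree with the $\uu$ at hand; the already-proved (iii) $\Rightarrow$ (i) then closes the loop.
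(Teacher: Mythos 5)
Your treatment of (i) $\iff$ (iii) is correct and is essentially the paper's own argument: well-posedness of~\eqref{eq:vsT} (coercivity of $\mathrm{m}_T$ from (S1)) turns~\eqref{eq:mixed.hyb.h:1} into $\uvsigma[T]=\vsT\uu[T]$, and the identity $-\mathrm{b}_T(\vsT\uu[T],\uv[T])=\mathrm{m}_T(\vsT\uu[T],\vsT\uv[T])=\mathrm{a}_T(\uu[T],\uv[T])$ converts~\eqref{eq:mixed.hyb.h:2} into~\eqref{eq:hybrid.h:2}. Your direct verification of (i) $\Rightarrow$ (ii) --- testing~\eqref{eq:mixed.hyb.h:2} with single-face and cell-only DOF vectors, then summing~\eqref{eq:mixed.hyb.h:1} against single-valued fluxes --- is also correct, and is in fact more explicit than the paper, which disposes of the whole equivalence (i) $\iff$ (ii) by citing the classical theory of Lagrange multipliers.

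The genuine gap is in (ii) $\Rightarrow$ (i), exactly where you anticipated the obstacle. Your closing step produces the unique solution of the primal hybrid problem~\eqref{eq:hybrid.h:2}, call it $\underline{u}_h^*$, and asserts that it ``must agree with the $\uu$ at hand'' by $u_T=\restrto{u_h}{T}$; this does not follow. First, even the agreement of the cell components is not free: to obtain it you must push $\underline{u}_h^*$ (together with $\vsT\underline{u}_{T}^{*}$) through the chain (iii) $\Rightarrow$ (i) $\Rightarrow$ (ii) and then invoke uniqueness of the solution of the mixed problem~\eqref{eq:mixed.h}, i.e.\ its well-posedness --- a step your proof never mentions. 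Second, and more fundamentally, hypothesis (ii) places no constraint whatsoever on the face components $u_F$ of the given vector $\uu$: they enter neither the condition $\uvsigma[h]\in\uvSigmah$, nor $u_h$, nor~\eqref{eq:mixed.h}. Consequently no argument starting from (ii) alone can force them to coincide with the face components of $\underline{u}_h^*$; indeed, replacing $u_F$ by $u_F+1$ on a single interface leaves (ii) intact while destroying~\eqref{eq:mixed.hyb.h:1} for the adjacent elements (test with $\tau_{TF}=1$ and all other components zero). What is missing is precisely the recovery of the face unknowns as Lagrange multipliers: given a mixed solution, one must show that face polynomials making the local equations~\eqref{eq:mixed.hyb.h:1} hold \emph{exist} (define $u_F$ by duality against fluxes whose only nonzero component is $\tau_{TF}$, and use~\eqref{eq:mixed.h:1} with suitable single-valued test fluxes to check that the two elements sharing an interface give the same definition and that boundary faces give zero) and are \emph{unique} (test the difference of two candidates with the same single-face fluxes). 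This existence-and-uniqueness statement is what the paper packages as ``classically follows from the theory of Lagrange multipliers''; as written, your final step assumes it rather than proves it, and the theorem's statement (ii) must itself be read as identifying the face DOFs of $\uu$ with these multipliers for the implication to be true at all.
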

\begin{proof}
  The equivalence $\text{(i)}\iff\text{(ii)}$ classically follows from the theory of Lagrange multipliers.
  Let us prove the equivalence $\text{(i)}\iff\text{(iii)}$.
  We first show that if $(\uvsigma[h],\uu[h])$ solves the mixed hybrid problem~\eqref{eq:mixed.hyb.h}, then it solves the primal hybrid problem~\eqref{eq:hybrid.h}.
  Equation~\eqref{eq:hybrid.h:1} immediately follows from~\eqref{eq:mixed.hyb.h:1} recalling the definition~\eqref{eq:vsT} of the potential-to-flux operator.
  As a consequence, it holds for all $T\in\Th$ and all $\uv[T]\in\uUT$,
  $$
  -\mathrm{b}_T(\uvsigma[T],\uv[T])
  = -\mathrm{b}_T(\vsT\uu[T],\uv[T])
  = \mathrm{m}_T(\vsT\uu[T],\vsT\uv[T])
  = \mathrm{a}_T(\uu[T],\uv[T]),
  $$
  where we have used the definition~\eqref{eq:vsT} of the potential-to-flux operator together with the symmetry of $\mathrm{m}_T$ in the second equality and the definition~\eqref{eq:hybrid.ah} of the bilinear form $\mathrm{a}_T$ to conclude.
  This implies that~\eqref{eq:mixed.hyb.h:2} is equivalent to~\eqref{eq:hybrid.h:2}.
  By similar arguments, we can prove that if $(\uvsigma[h],\uu[h])$ solves the primal hybrid problem~\eqref{eq:hybrid.h}, then it solves the mixed hybrid problem~\eqref{eq:mixed.hyb.h}, thus concluding the proof.
\end{proof}
We close this section with our main result, viz. the existence of a primal method belonging to the family~\eqref{eq:primal.h} whose solution coincides with that of the mixed method~\eqref{eq:mixed.h} for given stabilization bilinear forms satisfying Assumption~\ref{ass:s0T}.
In the light of Theorem~\ref{thm:hybridization}, it suffices to state the equivalence with respect to the corresponding mixed hybrid formulation~\eqref{eq:mixed.hyb.h}.
\begin{theorem}[Link with the family of primal discontinuous skeletal methods]\label{thm:equivalence:m->p}
  For all $T\in\Th$, let $\mathrm{s}_{\vSigma,T}$ satisfy Assumption~\ref{ass:s0T} and set with $\vsT$ defined by~\eqref{eq:vsT}:
  \begin{equation}\label{eq:sUT:m->p}
    \mathrm{s}_{U,T}(\uu[T],\uv[T])\eqbydef\mathrm{s}_{\vSigma,T}(\vsT\uu[T],\vsT\uv[T]).
  \end{equation}
  Then,
  \begin{enumerate}[1)]
  \item \emph{Properties of $\mathrm{s}_{U,T}$.}
    The stabilization bilinear forms $\mathrm{s}_{U,T}$, $T\in\Th$, satisfy Assumption~\ref{ass:s1T};
  \item \emph{Link with primal methods.} $\uu[h]\in\uUhD$ solves the primal problem~\eqref{eq:primal.h} with stabilization as in~\eqref{eq:sUT:m->p} if and only if $(\uvsigma[h],\uu[h])\in\uvcSigmah\times\uUhD$ with $\uvsigma[h]$ such that $\uvsigma[T]=\vsT\uu[T]$ for all $T\in\Th$ solves the mixed hybrid problem~\eqref{eq:mixed.hyb.h}.
  \end{enumerate}
\end{theorem}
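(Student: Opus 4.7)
My plan is to treat the two assertions separately, using Lemma~\ref{lem:vsT} as the main tool to transfer properties of $\mathrm{s}_{\vSigma,T}$ and $\mathrm{m}_T$ into properties of $\mathrm{s}_{U,T}$ and of the primal local bilinear form~\eqref{eq:primal.aT}.

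For point 1), the key preliminary observation is that, with $\mathrm{s}_{U,T}$ as in~\eqref{eq:sUT:m->p}, the primal bilinear form of~\eqref{eq:primal.aT} and the operator $\mathrm{a}_T$ of~\eqref{eq:hybrid.ah} coincide. Indeed, using the link~\eqref{eq:char.GT} together with the orthogonal decomposition~\eqref{eq:decomp.PTs} and the expansion~\eqref{eq:mT'}, for all $\uu[T],\uv[T]\in\uUT$ one has $(\GT\uu[T],\GT\uv[T])_T = (\ST\vsT\uu[T],\ST\vsT\uv[T])_T$, so that the sum with~\eqref{eq:sUT:m->p} yields $\mathrm{m}_T(\vsT\uu[T],\vsT\uv[T])$. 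Stability (S1\sprime) then follows by a two-step equivalence: first, (S1) on $\mathrm{s}_{\vSigma,T}$ gives $\mathrm{m}_T(\vsT\uv[T],\vsT\uv[T])\approx\norm[\vSigma,T]{\vsT\uv[T]}^2$; second, the stability estimate~\eqref{eq:vsT.cont} of $\vsT$ gives $\norm[\vSigma,T]{\vsT\uv[T]}\approx\norm[U,T]{\uv[T]}$.

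Polynomial consistency (S2\sprime) is the cleanest part. Let $w\in\Poly{k+1}(T)$. By the commuting property~\eqref{eq:vsT.commuting} of the potential-to-flux operator, $\vsT\uIUT w=\uvIST\GRAD w$. Since $\GRAD w\in\cGT[k]\subset\cST$, property (S2) applied to the argument $\vchi=\GRAD w$ gives $\mathrm{s}_{\vSigma,T}(\uvIST\GRAD w,\uvtau)=0$ for all $\uvtau\in\uvSigmaT$. Taking $\uvtau=\vsT\uv[T]$ and reading definition~\eqref{eq:sUT:m->p} backwards yields $\mathrm{s}_{U,T}(\uIUT w,\uv[T])=0$ for every $\uv[T]\in\uUT$, which is exactly (S2\sprime).

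For point 2), by construction the pair $(\uvsigma[h],\uu[h])$ with $\uvsigma[T]=\vsT\uu[T]$ already satisfies~\eqref{eq:hybrid.h:1}, so by Theorem~\ref{thm:hybridization} it solves the mixed hybrid problem~\eqref{eq:mixed.hyb.h} if and only if $\uu[h]$ solves the potential equation~\eqref{eq:hybrid.h:2} with the bilinear form $\mathrm{a}_h$ assembled from~\eqref{eq:hybrid.ah}. But the identity $\mathrm{m}_T(\vsT\uu[T],\vsT\uv[T])=(\GT\uu[T],\GT\uv[T])_T+\mathrm{s}_{U,T}(\uu[T],\uv[T])$ established at the beginning of the proof shows that this global form is precisely the one appearing in the primal problem~\eqref{eq:primal.h} with the choice~\eqref{eq:sUT:m->p}; hence the two variational equations are identical, and the equivalence follows.

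The proof contains no real obstacle: once the identification $\mathrm{m}_T(\vsT{\cdot},\vsT{\cdot})=(\GT{\cdot},\GT{\cdot})_T+\mathrm{s}_{U,T}({\cdot},{\cdot})$ is made at the outset using~\eqref{eq:char.GT}, all three properties reduce to a direct application of the corresponding item of Lemma~\ref{lem:vsT}, of Assumption~\ref{ass:s0T}, and of Theorem~\ref{thm:hybridization}. The only point requiring minor attention is to verify that the inclusion $\cGT[k]\subset\cST$ used in the consistency step holds in both cases $m=0$ and $m=k$ in~\eqref{eq:cST}, which is immediate from the definition.
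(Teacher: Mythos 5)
Your proof is correct and follows essentially the same route as the paper: symmetry and positive semi-definiteness are immediate, (S1\sprime) and (S2\sprime) are obtained from (S1)--(S2) via the stability~\eqref{eq:vsT.cont} and commuting property~\eqref{eq:vsT.commuting} of $\vsT$, and point 2) is deduced from Theorem~\ref{thm:hybridization}. Your explicit verification, via~\eqref{eq:char.GT}, that the primal bilinear form~\eqref{eq:primal.aT} with stabilization~\eqref{eq:sUT:m->p} coincides with the hybrid form $\mathrm{m}_T(\vsT{\cdot},\vsT{\cdot})$ of~\eqref{eq:hybrid.ah} is a worthwhile addition, since the paper relies on this identification implicitly (both in the first equality of its (S1\sprime) chain and in its one-line argument for point 2).
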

\begin{proof}%
  \begin{asparaenum}[1)]
  \item \emph{Properties of $\mathrm{s}_{U,T}$.}
    Let $T\in\Th$.
    The bilinear form $\mathrm{s}_{U,T}$ is clearly symmetric and positive semi-definite. It then suffices to prove conditions (S1\sprime) and (S2\sprime).
    To prove (S1\sprime), observe that for all $\uv[T]\in\uUT$ we have
  $$
  \norm[\mathrm{a},T]{\uv[T]}
  = \norm[\mathrm{m},T]{\vsT\uv[T]}
  \approx\norm[\vSigma,T]{\vsT\uv[T]}
  \approx\norm[U,T]{\uv[T]},
  $$
  where we have used the definition~\eqref{eq:hybrid.ah} of $\mathrm{a}_T$, (S1), and the stability and continuity~\eqref{eq:vsT.cont} of $\vsT$.
  Let us prove (S2\sprime). Letting $w\in\Poly{k+1}(T)$, for all $\uv[T]\in\uUT$ we have
    $$
    \mathrm{s}_{U,T}(\uIUT w,\uv[T])
    = \mathrm{s}_{\vSigma,T}(\vsT\uIUT w,\vsT\uv[T])
    = \mathrm{s}_{\vSigma,T}(\uvIST\GRAD w,\vsT\uv[T])
    = 0,
    $$
    where we have used the definition~\eqref{eq:sUT:m->p} of $\mathrm{s}_{U,T}$, the commuting property~\eqref{eq:vsT.commuting}, and (S2).

  \item \emph{Link with primal methods.} Compare the primal hybrid formulation~\eqref{eq:hybrid.h} with the primal formulation~\eqref{eq:primal.h} and recall the equivalence with the mixed hybrid formulation~\eqref{eq:mixed.hyb.h} stated in Theorem~\ref{thm:hybridization}.\qedhere
    \end{asparaenum}
\end{proof}


\section{From primal to mixed methods}\label{sec:p->m}

In this section we show that the primal discontinuous skeletal methods of Section~\ref{sec:primal} with $m=0$ can be recast into the mixed formulation introduced in Section~\ref{sec:mixed}.
This enables us to close the circle and show a precise equivalence relation between the family~\eqref{eq:mixed.h} of mixed discontinuous skeletal methods and the family~\eqref{eq:primal.h} of primal discontinuous skeletal methods.

\subsection{Primal-to-mixed potential-to-flux operator}

We assume from this point on that, for a given integer $k\ge 0$, $l$ is as in~\eqref{eq:l.m} and $$m=0.$$
The crucial ingredient is the primal-to-mixed potential-to-flux operator $\vsT[k,l]:\uUT\to\uvSigmaT[k,l,0]$ such that, for all $\uw[T]\in\uUT$, $\vsT[k,l]\uw[T]$ solves
\begin{equation}\label{eq:primal.vsT}
  -\mathrm{b}_T(\vsT[k,l]\uw[T],\uv[T]) = \mathrm{a}_T(\uw[T],\uv[T])\qquad\forall\uv[T]\in\uUT.
\end{equation}
The use of a similar notation as for the mixed-to-primal potential-to-flux operator is motivated by the fact that these two operators share the same properties (compare Lemmas~\ref{lem:vsT} and~\ref{lem:primal.vsT}) and play very much the same role.
\begin{lemma}[Properties of the primal-to-mixed potential-to-flux operator]\label{lem:primal.vsT}
  Let a mesh element $T\in\Th$ be given and let $\mathrm{s}_{U,T}$ be a bilinear form satisfying Assumption~\ref{ass:s1T}.
  Then, the corresponding potential-to-flux operator $\vsT[k,l]$ given by~\eqref{eq:vsT} is well defined and has the following properties:
  \begin{enumerate}[1)]
  \item \emph{Stability and continuity.} For all $\uv[T]\in\uUT$, it holds with norms $\norm[\vSigma,T]{{\cdot}}$ and $\norm[U,T]{{\cdot}}$ defined by~\eqref{eq:norm.vSigmaT} and~\eqref{eq:normUT}, respectively,
    \begin{equation}\label{eq:primal.vsT.cont}
      \norm[\vSigma,T]{\vsT[k,l]\uv[T]}\approx\norm[U,T]{\uv[T]}.
    \end{equation}
  \item \emph{Commuting property.} For all $w\in\Poly{k+1}(T)$, we have
    \begin{equation}\label{eq:primal.vsT.commuting}
      \vsT[k,l]\uIUT w = \uvIST[k,l,0]\GRAD w.
    \end{equation}
  \item \emph{Link with the discrete gradient operator.} It holds, with operators $\GT$, $\PT$, and $\ST$ defined by~\eqref{eq:GT},~\eqref{eq:PT}, and~\eqref{eq:ST}, respectively, that
    \begin{equation}\label{eq:primal.char.GT}
      \GT=\PT\circ\vsT[k,l]=\ST\circ\vsT[k,l].
    \end{equation}  
  \end{enumerate}
  Additionally, $\vsT[k,l]$ defines an isomorphism from $\uUTs$ (cf.~\eqref{eq:uUTs}) to $\uvSigmaT[k,l,0]$.   
\end{lemma}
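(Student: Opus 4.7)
The plan is to mirror the proof of Lemma~\ref{lem:vsT}, with the roles of $\mathrm{m}_T$ and $\mathrm{a}_T$ swapped, and to establish in turn well-posedness of~\eqref{eq:primal.vsT}, the norm equivalence~\eqref{eq:primal.vsT.cont}, the commutation~\eqref{eq:primal.vsT.commuting}, and the identity~\eqref{eq:primal.char.GT}; the isomorphism claim will then follow as a direct consequence. For well-posedness I first observe that both sides of~\eqref{eq:primal.vsT} vanish when $\uv[T]\in\uIUT\Poly{0}(T)$: the left-hand side because the reformulation~\eqref{eq:bT'} only involves $\GRAD v_T$ and $v_T-v_F$, and the right-hand side by (S2\sprime) together with $\GT\uIUT c=\GRAD c=0$, so both descend to the quotient $\uUTs$. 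Since $m=0$ forces $\coGT[0]=\{0\}$, a short dimension count gives $\dim\uvSigmaT[k,l,0]=\dim\uUTs$, and well-posedness reduces to injectivity of $\uvtau\mapsto\mathrm{b}_T(\uvtau,\cdot)$: this is immediate from~\eqref{eq:bT'} by first testing with DOFs supported on a single face (forcing every $\tau_{TF}=0$) and then with $v_F=0$ and $v_T\in\Poly{l}(T)$ arbitrary (which makes $\vtau_{\cG,T}\in\cGT[l-1]=\GRAD\Poly{l}(T)$ orthogonal to itself, hence zero).

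For~\eqref{eq:primal.vsT.cont} I would proceed along the same lines as in~\eqref{eq:vsT.cont:1}--\eqref{eq:vsT.cont:2}. Testing~\eqref{eq:primal.vsT} with $\uw[T]=\uv[T]$, using (S1\sprime) from below on $\mathrm{a}_T$ together with~\eqref{eq:bT.cont} from above, gives $\norm[U,T]{\uv[T]}\lesssim\norm[\vSigma,T]{\vsT[k,l]\uv[T]}$. For the reverse inequality I would establish a discrete inf-sup bound for $\mathrm{b}_T$ on $\uvSigmaT[k,l,0]\times\uUTs$: given $\uvtau\in\uvSigmaT[k,l,0]$, pick $w_T\in\Poly{l}(T)$ of zero mean with $\GRAD w_T=-\vtau_{\cG,T}$ (possible because $\vtau_{\cG,T}\in\cGT[l-1]$) and set $w_F=\lproj[F]{k}(w_T)-h_F\tau_{TF}$ on each $F\in\Fh[T]$ (the face projector being active only when $l=k+1$). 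Combining~\eqref{eq:bT'} with a discrete trace inequality and a local Poincar\'e inequality to control $\norm[F]{w_T}$ by $h_F^{1/2}\norm[T]{\vtau_{\cG,T}}$ then yields $\mathrm{b}_T(\uvtau,\uw[T])\gtrsim\norm[\vSigma,T]{\uvtau}^2$ and $\norm[U,T]{\uw[T]}\lesssim\norm[\vSigma,T]{\uvtau}$. Coupling this bound with the defining identity and (S1\sprime) concludes. Constructing this inf-sup test function — in particular handling the boundary trace of $w_T$ when $l=k+1$ so that the $\Poly{k}(F)$ constraint on $w_F$ is respected — is the main technical obstacle.

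For~\eqref{eq:primal.vsT.commuting} I would fix $w\in\Poly{k+1}(T)$, set $\uvsigma\eqbydef\uvIST[k,l,0]\GRAD w$, and check that $\uvsigma$ solves~\eqref{eq:primal.vsT} with input $\uv[T]=\uIUT w$. Expanding $-\mathrm{b}_T(\uvsigma,\uv)$ through~\eqref{eq:bT'}, using the projection properties defining $\uvIST[k,l,0]$ — including the key fact that $\GRAD w\SCAL\normal_{TF}\in\Poly{k}(F)$, which makes the face projector transparent — and integrating once by parts on $T$ yields $-(\LAPL w,v_T)_T+\sum_{F\in\Fh[T]}(\GRAD w\SCAL\normal_{TF},v_F)_F$. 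For the right-hand side $\mathrm{a}_T(\uIUT w,\uv)$, (S2\sprime) cancels the stabilization; the commutation~\eqref{eq:commut.GT} together with $\GRAD w\in\cGT[k]=\cST$ (since $m=0$) gives $\GT\uIUT w=\GRAD w$, and plugging this into the definition~\eqref{eq:GT} with $\vtau=\GRAD w$ produces the same expression. Well-posedness then gives~\eqref{eq:primal.vsT.commuting}.

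Finally, for~\eqref{eq:primal.char.GT}, since $m=0$ entails $\vsigma_{\coG,T}=0$ for every $\uvsigma\in\uvSigmaT[k,l,0]$, the identity $\ST\uvsigma=\PT\uvsigma$ from~\eqref{eq:ST} reduces the claim to $\PT\circ\vsT[k,l]=\GT$. Setting $\uvsigma=\vsT[k,l]\uv[T]$ and testing~\eqref{eq:primal.vsT} with $\uIUT w$ for $w\in\Poly{k+1}(T)$, the projection properties in~\eqref{eq:bT.bh} rewrite $-\mathrm{b}_T(\uvsigma,\uIUT w)$ as $-(\DT\uvsigma,w)_T+\sum_{F\in\Fh[T]}(\sigma_{TF},w)_F=(\PT\uvsigma,\GRAD w)_T$ by~\eqref{eq:PT}, while the right-hand side reduces to $(\GT\uv[T],\GRAD w)_T$ exactly as in the commutation step; since $\cGT[k]=\GRAD\Poly{k+1}(T)$, this identifies $\PT\uvsigma=\GT\uv[T]$. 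The isomorphism claim then follows from~\eqref{eq:primal.vsT.cont}, which identifies $\ker\vsT[k,l]$ with $\uIUT\Poly{0}(T)$, combined with the dimension equality noted above.
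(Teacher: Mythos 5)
Your proof is correct and follows essentially the same route as the paper's: well-posedness of~\eqref{eq:primal.vsT} via the compatibility on $\uIUT\Poly{0}(T)$, a dimension count, and an inf-sup/injectivity argument for $\mathrm{b}_T$ built on the reformulation~\eqref{eq:bT'}; the two-sided stability by diagonal testing in one direction and an explicitly constructed test function in the other; and uniqueness-based identification (testing against $\uIUT w$, $w\in\Poly{k+1}(T)$) for both the commuting property and the gradient link, with the isomorphism following from stability plus the dimension count. If anything, your inf-sup test function with $w_F=\lproj[F]{k}w_T-h_F\tau_{TF}$ is slightly more careful than the paper's own choice $v_{\vtau,F}-\restrto{v_{\vtau,T}}{F}=h_F\tau_{TF}$, which respects the constraint $v_{\vtau,F}\in\Poly{k}(F)$ only when $l\le k$: your projection correctly covers the case $l=k+1$, at the mild price of replacing the paper's exact identity $\norm[U,T]{\uv[\vtau,T]}=\norm[\vSigma,T]{\uvtau}$ by a bound obtained from discrete trace and Poincar\'e inequalities.
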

\begin{proof}
  Let $T\in\Th$.
  To show that $\vsT[k,l]$ is well defined we prove the following inf-sup condition: For all $\uvtau\in\uvSigmaT[k,l,0]$,
  \begin{equation}\label{eq:inf-sup}
    \norm[\vSigma,T]{\uvtau}\le
    \mathsf{S}\eqbydef\sup_{\uv[T]\in\uUTs\setminus\{\underline{0}_{U,T}\}}\frac{\mathrm{b}_T(\uvtau,\uv[T])}{\norm[U,T]{\uv[T]}}.
  \end{equation}
  Let $\uv[\vtau,T]\in\uUT$ be such that $\GRAD v_{\vtau,T} = \vtau_T$ and $v_{\vtau,F} - v_{\vtau,T}=h_F\tau_{TF}$ ($\uv[\vtau,T]$ is defined up to an element of $\uIUT\Poly{0}(T)$, coeherently with the fact that we write $\uUTs$ in the supremum).
  It can be checked that $\norm[U,T]{\uv[\vtau,T]}=\norm[\vSigma,T]{\uvtau[T]}$ and it holds, recalling the reformulation~\eqref{eq:bT'} of the bilinear form $\mathrm{b}_T$,
  $$
  \norm[\vSigma,T]{\uvtau}^2
  = -\mathrm{b}_T(\uvtau[T],\uv[\vtau,T])
  \le\mathsf{S}\norm[U,T]{\uv[\vtau,T]}
  =\mathsf{S}\norm[\vSigma,T]{\uvtau},
  $$
  which proves~\eqref{eq:inf-sup}.
  To prove the well-posedness of problem~\eqref{eq:primal.vsT} it only remains to observe that, for all $\uv[T]\in\uIUT\Poly{0}(T)$, equation~\eqref{eq:primal.vsT} becomes the trivial identity $0=0$, which can be intepreted as a compatibility condition.
  Finally, the fact that $\vsT[k,l]$ defines an isomorphism from $\uUTs$ to $\uvSigmaT[k,l,0]$ follows observing that $\vsT[k,l]$ is injective as a result of~\eqref{eq:inf-sup} and $\dim(\uUTs)=\dim(\uvSigmaT[k,l,0])$.
  
  \begin{asparaenum}[1)]
  \item \emph{Stability and continuity.} Combining the inf-sup condition~\eqref{eq:inf-sup} with the definition~\eqref{eq:primal.vsT} of $\vsT[k,l]$, and using the Cauchy--Schwarz inequality followed by (S1), we get for all $\uv[T]\in\uUT$ that
    $$
    \norm[\vSigma,T]{\vsT[k,l]\uv[T]}
    \le\sup_{\uw[T]\in\uUTs\setminus\{\underline{0}_{U,T}\}}\frac{\mathrm{b}_T(\vsT[k,l]\uv[T],\uw[T])}{\norm[U,T]{\uw[T]}}
    =\sup_{\uw[T]\in\uUTs\setminus\{\underline{0}_{U,T}\}}\frac{\mathrm{a}_T(\uv[T],\uw[T])}{\norm[U,T]{\uw[T]}}
    \lesssim\norm[U,T]{\uv[T]}.
    $$
    On the other hand, (S1) followed by the definition~\eqref{eq:primal.vsT} of $\vsT[k,l]$ and the boundedness~\eqref{eq:bT.cont} of the bilinear form $\mathrm{b}_T$ yields
    $$
    \norm[U,T]{\uv[T]}^2
    \lesssim\mathrm{a}_T(\uv[T],\uv[T])
    = -\mathrm{b}_T(\vsT[k,l]\uv[T],\uv[T])
    \le\norm[\vSigma,T]{\vsT[k,l]\uv[T]}\norm[U,T]{\uv[T]},
    $$
    which concludes the proof of~\eqref{eq:primal.vsT.cont}.
    
  \item \emph{Commuting property.} Let $w\in\Poly{k+1}(T)$.
    For all $\uv[T]\in\uUT$ it holds
    $$
    \begin{aligned}
      -\mathrm{b}_T(\vsT[k,l]\uIUT w,\uv[T])
      = \mathrm{a}_T(\uIUT w,\uv[T])
      = (\GRAD w,\GT\uv[T])_T
      = -\mathrm{b}_T(\uvIST[k,l,0]\GRAD w,\uv[T]),
    \end{aligned}
    $$
    where we have used the definition~\eqref{eq:primal.vsT} of $\vsT[k,l]$ in the first equality, the definition~\eqref{eq:primal.aT} of $\mathrm{a}_T$ together with (S2\sprime) in the second equality, and concluded recalling the definitions~\eqref{eq:GT} of $\GT$,~\eqref{eq:uvIST} of $\uvIST[k,l,0]$, and~\eqref{eq:bT.bh} of $\mathrm{b}_T$.
    As a consequence,
    $$
    \mathrm{b}_T(\uvIST[k,l,0]\GRAD w-\vsT[k,l]\uIUT w,\uv[T]) = 0\qquad\forall \uv[T]\in\uUT,
    $$
    which, accounting for the inf-sup condition~\eqref{eq:inf-sup}, implies~\eqref{eq:primal.char.GT}.
    
  \item \emph{Link with the discrete gradient operator.}
    Let $\uv[T]\in\uUT$ and $w\in\Poly{k+1}(T)$.
    Recalling the definitions~\eqref{eq:bT.bh} of $\mathrm{b}_T$ and~\eqref{eq:uIUT} of $\uIUT$, we infer that
    $$
    \begin{aligned}
      -\mathrm{b}_T(\vsT[k,l]\uv[T],\uIUT w)
      &=-(\DT\vsT[k,l]\uv[T],\lproj[T]{l} w)_T + \sum_{F\in\Fh[T]}((\vsT[k,l]\uv[T])_{TF},\lproj[F]{k} w)_F
      \\
      &=-(\DT\vsT[k,l]\uv[T], w)_T + \sum_{F\in\Fh[T]}((\vsT[k,l]\uv[T])_{TF}, w)_F
      = ((\PT\circ\vsT[k,l])\uv[T],\GRAD w)_T,
    \end{aligned}
    $$
    where we have used the definition~\eqref{eq:lproj} of $\lproj[T]{l}$ and $\lproj[F]{k}$ to pass to the second line and the definition~\eqref{eq:PT} of $\PT$ to conclude.
    On the other hand, by the definition~\eqref{eq:primal.aT} of $\mathrm{a}_T$ together with the polynomial consistency of $\GT$ (a consequence of~\eqref{eq:commut.GT}) and (S2\sprime), we have
    $$
    \mathrm{a}_T(\uv[T],\uIUT w) = (\GT\uv[T],\GRAD w)_T.
    $$
    Substituting the above relations into the definition~\eqref{eq:primal.vsT} of $\vsT[k,l]$ we infer that $\GT\uv[T]=\PT\circ\vsT[k,l]$.
    Additionally, since we have supposed $m=0$, we also have $\ST=\PT$, thus concluding the proof.
    \qedhere
  \end{asparaenum}
\end{proof}

\subsection{Equivalent mixed formulation of primal methods}

We close this section by showing the existence of a mixed method belonging to the family~\eqref{eq:mixed.h} whose solution coincides with that of the primal problem~\eqref{eq:primal.h}.
In the light of Theorem~\ref{thm:hybridization}, we state the equivalence result in terms of the corresponding mixed hybrid formulation~\eqref{eq:mixed.hyb.h}.

\begin{theorem}[Link with the family of mixed discontinuous skeletal methods]\label{thm:equivalence:p->m}
  For all $T\in\Th$, let $\mathrm{s}_{U,T}$ satisfy Assumption~\ref{ass:s1T} and set, for all $\uvsigma[T],\uvtau[T]\in\uvSigmaT[k,l,0]$,
  \begin{equation}\label{eq:s0T:p->m}
    \mathrm{s}_{\vSigma,T}(\uvsigma[T],\uvtau[T])
    \eqbydef\mathrm{s}_{U,T}((\vsT[k,l])^{-1}\uvsigma[T],(\vsT[k,l])^{-1}\uvtau[T]),
  \end{equation}
  where it is understood that $(\vsT[k,l])^{-1}\uvtau[T]$ and $(\vsT[k,l])^{-1}\uvsigma[T]$ are defined up to an element of $\uIUT\Poly{0}(T)$.
  Then,
  \begin{enumerate}[1)]
  \item \emph{Properties of $\mathrm{s}_{\vSigma,T}$.} The stabilization bilinear forms $\mathrm{s}_{\vSigma,T}$, $T\in\Th$ satisfy Assumption~\ref{ass:s0T};
  \item \emph{Link with mixed methods.} $(\uvsigma[h],\uu[h])\in\uvcSigmah[k,l,0]\times\uUhD$ solves the mixed hybrid problem~\eqref{eq:mixed.hyb.h} with stabilization as in~\eqref{eq:s0T:p->m} if and only if $\uu[h]$ solves the primal problem~\eqref{eq:primal.h} and, for all $T\in\Th$, $\uvsigma[T]=\vsT[k,l]\uu[T]$ with $\vsT[k,l]$ defined by~\eqref{eq:primal.vsT}.
  \end{enumerate}
\end{theorem}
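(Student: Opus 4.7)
The plan is to tackle the two claims separately, with part 1) being a direct verification and part 2) reducing, via Theorem~\ref{thm:hybridization}, to showing that the primal hybrid formulation is obtained by setting $\uvsigma[T]=\vsT[k,l]\uu[T]$. Before either step, one must check that~\eqref{eq:s0T:p->m} is actually well posed on $\uvSigmaT[k,l,0]\times\uvSigmaT[k,l,0]$: by Lemma~\ref{lem:primal.vsT}, $\vsT[k,l]$ is an isomorphism $\uUTs\to\uvSigmaT[k,l,0]$, so its preimages are only defined modulo $\uIUT\Poly{0}(T)$, but (S2\sprime) applied with $w\in\Poly{0}(T)\subset\Poly{k+1}(T)$ yields $\mathrm{s}_{U,T}(\uIUT c,\cdot)=0$ for every constant $c$, so the ambiguity drops out. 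Symmetry and positive semi-definiteness of $\mathrm{s}_{\vSigma,T}$ are then inherited from those of $\mathrm{s}_{U,T}$.

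To prove (S1), fix $\uvtau[T]\in\uvSigmaT[k,l,0]$ and select any preimage $\uv[T]\in(\vsT[k,l])^{-1}\uvtau[T]$. Since $m=0$ forces $\coGT[0]=\{0\}$ and hence $\ST=\PT$, identity~\eqref{eq:primal.char.GT} yields $\ST\uvtau[T]=\PT\vsT[k,l]\uv[T]=\GT\uv[T]$; combining this with the definitions of $\mathrm{m}_T$ and $\mathrm{s}_{\vSigma,T}$ gives
\begin{equation*}
\norm[\mathrm{m},T]{\uvtau[T]}^{2}=(\GT\uv[T],\GT\uv[T])_T+\mathrm{s}_{U,T}(\uv[T],\uv[T])=\norm[\mathrm{a},T]{\uv[T]}^{2}\approx\norm[U,T]{\uv[T]}^{2}\approx\norm[\vSigma,T]{\uvtau[T]}^{2},
\end{equation*}
using (S1\sprime) and the norm equivalence~\eqref{eq:primal.vsT.cont}. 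To prove (S2), for $\vchi=\GRAD w\in\cST[k,0]=\cGT[k]$ with $w\in\Poly{k+1}(T)$, the commuting property~\eqref{eq:primal.vsT.commuting} identifies $\uIUT w$ as a preimage of $\uvIST[k,l,0]\vchi$, so $\mathrm{s}_{\vSigma,T}(\uvIST[k,l,0]\vchi,\uvtau[T])=\mathrm{s}_{U,T}(\uIUT w,(\vsT[k,l])^{-1}\uvtau[T])$, which vanishes by (S2\sprime).

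For part 2), I invoke Theorem~\ref{thm:hybridization} to replace~\eqref{eq:mixed.hyb.h} by the equivalent primal hybrid formulation~\eqref{eq:hybrid.h}. The cornerstone computation reads, for any $\uvtau[T]\in\uvSigmaT[k,l,0]$ with preimage $\uw[T]\in(\vsT[k,l])^{-1}\uvtau[T]$,
\begin{equation*}
\mathrm{m}_T(\vsT[k,l]\uu[T],\uvtau[T])=(\GT\uu[T],\GT\uw[T])_T+\mathrm{s}_{U,T}(\uu[T],\uw[T])=\mathrm{a}_T(\uu[T],\uw[T])=-\mathrm{b}_T(\uvtau[T],\uu[T]),
\end{equation*}
the last equality obtained by applying the defining relation~\eqref{eq:primal.vsT} of $\vsT[k,l]$ to the symmetric pair $(\uw[T],\uu[T])$. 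Thus $(\uvsigma[h],\uu)$ with $\uvsigma[T]=\vsT[k,l]\uu[T]$ automatically satisfies~\eqref{eq:mixed.hyb.h:1}, while summing over $T$ gives $-\mathrm{b}_h(\uvsigma[h],\uv)=\mathrm{a}_h(\uu,\uv)$, so~\eqref{eq:mixed.hyb.h:2} reduces exactly to~\eqref{eq:primal.h}. Conversely, given a solution $(\uvsigma[h],\uu)$ of the mixed hybrid problem, subtracting the cornerstone identity from~\eqref{eq:mixed.hyb.h:1} yields $\mathrm{m}_T(\uvsigma[T]-\vsT[k,l]\uu[T],\uvtau[T])=0$ for all $\uvtau[T]$, whence $\uvsigma[T]=\vsT[k,l]\uu[T]$ by the coercivity (S1) already established, and~\eqref{eq:mixed.hyb.h:2} then delivers~\eqref{eq:primal.h}.

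The principal obstacle is not computational but lies in disciplining the quotient-space indeterminacy of $(\vsT[k,l])^{-1}$: the definition~\eqref{eq:s0T:p->m}, the cornerstone identity, and the symmetry manipulations must all be independent of the chosen representative in $\uUTs$. All of these follow from the single observation that (S2\sprime) annihilates $\uIUT$-images of constants, so this is the one point that needs to be highlighted cleanly at the outset.
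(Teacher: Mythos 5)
Your proof is correct and follows essentially the same route as the paper's: the same key ingredients (the identity $\mathrm{m}_T(\vsT[k,l]\uw[T],\vsT[k,l]\uv[T])=\mathrm{a}_T(\uw[T],\uv[T])$ via~\eqref{eq:primal.char.GT}, the commuting property~\eqref{eq:primal.vsT.commuting}, the norm equivalence~\eqref{eq:primal.vsT.cont}, and surjectivity of $\vsT[k,l]$ to pass between test spaces) are deployed in the same order for both parts. The only difference is cosmetic and to your credit: you explicitly verify that~\eqref{eq:s0T:p->m} is independent of the representative modulo $\uIUT\Poly{0}(T)$ by applying (S2\sprime) to constants, a point the paper leaves implicit, while your invocation of Theorem~\ref{thm:hybridization} is superfluous since you then argue directly on the mixed hybrid problem anyway.
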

\begin{proof}
  \begin{asparaenum}[1)]
  \item \emph{Properties of $\mathrm{s}_{\vSigma,T}$.}
    Let $T\in\Th$.
    The bilinear form $\mathrm{s}_{\vSigma,T}$ is clearly symmetric and positive semi-definite. It then suffices to prove conditions (S1) and (S2).
    Let us start by (S1).
    Recalling the definition~\eqref{eq:mT} of the bilinear form $\mathrm{m}_T$, property~\eqref{eq:primal.char.GT} for the potential-to-flux operator $\vsT[k,l]$ defined by~\eqref{eq:primal.vsT}, and~\eqref{eq:s0T:p->m}, we infer for all $\uw[T],\uv[T]\in\uUT$ that
    \begin{equation}\label{eq:primal.mT.aT}
      \mathrm{m}_T(\vsT[k,l]\uw[T],\vsT[k,l]\uv[T]) = \mathrm{a}_T(\uw[T],\uv[T]).
    \end{equation}
    Let now $\uvtau[T]\in\uvSigmaT[k,l,0]$ be such that $\uvtau[T]=\vsT[k,l]\uv[T]$ with $\uv[T]\in\uUT$ (the existence of such $\uv[T]$, defined up to an element of $\uIUT\Poly{0}(T)$, follows from Lemma~\ref{lem:primal.vsT}).
    We have that
    $$
      \norm[\vSigma,T]{\uvtau[T]}
      \approx\norm[U,T]{\uv[T]}
      \approx\norm[\mathrm{a},T]{\uv[T]} = \norm[\mathrm{m},T]{\uvtau[T]},
    $$
    where the first norm equivalence follows from~\eqref{eq:primal.vsT.cont}, the second from (S2\sprime), and the last one from~\eqref{eq:primal.mT.aT}.
    Property (S1) follows.

    Let us now prove (S2).
    Let $\vchi\in\cGT[k]$ be such that $\vchi=\GRAD w$ with $w\in\Poly{k+1}(T)$.
    For all $\uv[T]\in\uUT$ it holds,
    $$
    \mathrm{s}_{\vSigma,T}(\uvIST[k,l,0]\vchi,\vsT[k,l]\uv[T])
    = \mathrm{s}_{U,T}((\vsT[k,l])^{-1}\uvIST[k,l,0]\vchi,\uv[T])
    = \mathrm{s}_{U,T}(\uIUT w,\uv[T])
    =0,
    $$   
    where we have used the definition~\eqref{eq:s0T:p->m} of $\mathrm{s}_{\vSigma,T}$, the commuting property~\eqref{eq:primal.vsT.commuting}, and concluded using (S2\sprime).

  \item \emph{Link with mixed methods.}
    We let $(\uvsigma[h],\uu[h])\in\uvcSigmah[k,l,0]\times\uUhD$ solve the mixed hybrid problem~\eqref{eq:mixed.hyb.h} with $s_{\vSigma,T}$ given by~\eqref{eq:s0T:p->m}, and we show that $\uu[h]$ solves \eqref{eq:primal.h} and $\uvsigma[T]=\vsT[k,l]\uu[T]$ for all $T\in\Th$.
    Making $\uvtau[T]=\vsT[k,l]\uv[T]$ with $\uv[T]\in\uUT$ in~\eqref{eq:mixed.hyb.h:1}, it is inferred
    $$
    \begin{aligned}
      0
      = \mathrm{m}_T(\uvsigma[T],\vsT[k,l]\uv[T]) + \mathrm{b}_T(\vsT[k,l]\uv[T],\uu[T])
      = \mathrm{m}_T(\uvsigma[T]-\vsT[k,l]\uu[T],\vsT[k,l]\uv[T]).
    \end{aligned}
    $$
    Since $\uvSigmaT[k,l,0]=\vsT[k,l]\uUT$ as a result of Lemma~\ref{lem:primal.vsT} and $\uv[T]$ is arbitrary in $\uUT$, this means that
    \begin{equation}\label{eq:equivalence:p->:4}
      \uvsigma[T]=\vsT[k,l]\uu[T]\qquad\forall T\in\Th.
    \end{equation}
    Plugging this relation into~\eqref{eq:mixed.hyb.h:2}, and recalling the definition~\eqref{eq:primal.vsT} of $\vsT[k,l]$, we infer that it holds for all $\uv[h]\in\uUhD$,
    $$
    (f,v_h)
    = - \sum_{T\in\Th}\mathrm{b}_T(\uvsigma[T],\uv[T])
    = - \sum_{T\in\Th}\mathrm{b}_T(\vsT[k,l]\uu[T],\uv[T])
    = \mathrm{a}_h(\uu[h],\uv[h]),
    $$
    which shows that $\uu[h]$ solves the primal problem~\eqref{eq:primal.h}.
    Following a similar reasoning one can prove that, if $\uu[h]$ solves~\eqref{eq:primal.h}, then $(\uvsigma[h],\uu[h])$ with $\uvsigma[T]=\vsT[k,l]\uu[T]$ for all $T\in\Th$ solves~\eqref{eq:mixed.hyb.h}.
    \qedhere
  \end{asparaenum}
\end{proof}


\section{Analysis}\label{sec:analysis}

In this section we carry out a unified convergence analysis encompassing both mixed and primal discontinuous skeletal methods.
Recalling Theorems~\ref{thm:hybridization},~\ref{thm:equivalence:m->p}, and~\ref{thm:equivalence:p->m}, we focus on the mixed hybrid problem~\eqref{eq:mixed.hyb.h}.
Let three integers $k\ge 0$ and $l,m$ as in~\eqref{eq:l.m} be fixed, set $\uvXh\eqbydef\uvcSigmah\times\uUhD$, and define the bilinear form $\mathcal{A}_h:\uvXh\times\uvXh\to\Real$ such that
\begin{equation}\label{eq:cAh}
  \mathcal{A}_h((\uvsigma[h],\uu[h]),(\uvtau[h],\uv[h]))
  \eqbydef
  \mathrm{m}_h(\uvsigma[h],\uvtau[h]) + \mathrm{b}_h(\uvtau[h],\uu[h]) - \mathrm{b}_h(\uvsigma[h],\uv[h]).
\end{equation}
Problem~\eqref{eq:mixed.hyb.h} admits the following equivalent reformulation:
Find $(\uvsigma[h],\uu)\in\uvcSigmah\times\uUhD$ such that,
\begin{equation}\label{eq:mixed.hyb.h'}
  \mathcal{A}_h((\uvsigma[h],\uu[h]),(\uvtau[h],\uv[h])) = (f,v_h)\qquad\forall (\uvtau[h],\uv[h])\in\uvcSigmah\times\uUhD.
\end{equation}
\subsection{Stability and well-posedness}
We equip the space $\uvXh$ with the norm $\norm[\vX,h]{{\cdot}}$ such that, for all \mbox{$(\uvtau[h],\uv[h])\in\uvXh$},
$$
\norm[\vX,h]{(\uvtau[h],\uv[h])}^2\eqbydef
\norm[\vSigma,h]{\uvtau[h]}^2 + \norm[U,h]{\uv[h]}^2,
$$
with norms $\norm[\vSigma,h]{{\cdot}}$ on $\uvcSigmah$ and $\norm[U,h]{{\cdot}}$ on $\uUh$ defined by~\eqref{eq:norm.vSigmah} and~\eqref{eq:norm.U}, respectively.
\begin{lemma}[Well-posedness]
  For all $(\uvchi[h],\uw[h])\in\uvXh$ it holds
  \begin{equation}\label{eq:cAh:inf-sup}
    \norm[\vX,h]{(\uvchi[h],\uw[h])}
    \lesssim\sup_{(\uvtau[h],\uv[h])\in\uvXh\setminus\{\underline{\vec{0}}_{\vX,h}\}}
    \frac{\mathcal{A}_h((\uvchi[h],\uw[h]),(\uvtau[h],\uv[h]))}{\norm[\vX,h]{(\uvtau[h],\uv[h])}}.
  \end{equation}
  Consequently, problem~\eqref{eq:mixed.hyb.h'} is well-posed.
\end{lemma}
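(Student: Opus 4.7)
\emph{Proof plan.} The strategy is the classical ``inf-sup via test-function construction'' for saddle-point systems: given $(\uvchi[h],\uw[h])\in\uvXh$, I build an explicit test $(\uvtau[h],\uv[h])\in\uvXh$ that is a perturbation of the ``diagonal'' choice by the mixed-to-primal potential-to-flux operator $\vsT$ of~\eqref{eq:vsT}, thereby activating the off-diagonal $\mathrm{b}_h$-blocks of $\mathcal{A}_h$ so as to recover control of $\norm[U,h]{\uw[h]}$ in addition to the obvious control of $\norm[\vSigma,h]{\uvchi[h]}$.

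More precisely, fix a parameter $\xi>0$ to be chosen, denote by $\vsh\uw[h]\in\uvcSigmah$ the element-by-element assembly of $\vsT\uw[T]$, and consider the test $(\uvtau[h],\uv[h])\eqbydef(\uvchi[h]-\xi\,\vsh\uw[h],\uw[h])$, which lies in $\uvXh$ since $\uw[h]\in\uUhD$ and $\uvcSigmah$ is unconstrained. Expanding $\mathcal{A}_h$ according to~\eqref{eq:cAh}, the two $\mathrm{b}_h(\uvchi[h],\uw[h])$ contributions cancel and one is left with
\[
\mathcal{A}_h((\uvchi[h],\uw[h]),(\uvtau[h],\uv[h]))
= \mathrm{m}_h(\uvchi[h],\uvchi[h]) - \xi\,\mathrm{m}_h(\uvchi[h],\vsh\uw[h]) - \xi\,\mathrm{b}_h(\vsh\uw[h],\uw[h]).
\]
The defining identity~\eqref{eq:vsT} tested element-wise with $\uvtau=\vsT\uw[T]$ yields $-\mathrm{b}_T(\vsT\uw[T],\uw[T])=\mathrm{m}_T(\vsT\uw[T],\vsT\uw[T])$; summing over $T\in\Th$ turns the last term into $+\xi\,\mathrm{m}_h(\vsh\uw[h],\vsh\uw[h])$.

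Applying the stability part of (S1) to the two diagonal $\mathrm{m}_h$-terms, the Cauchy--Schwarz and Young inequalities to the cross term $\xi\,\mathrm{m}_h(\uvchi[h],\vsh\uw[h])$, and the norm equivalence $\norm[\vSigma,h]{\vsh\uw[h]}\approx\norm[U,h]{\uw[h]}$ from~\eqref{eq:vsT.cont}, I obtain, for $\xi$ chosen sufficiently small (independently of $h$),
\[
\mathcal{A}_h((\uvchi[h],\uw[h]),(\uvtau[h],\uv[h]))
\gtrsim \norm[\vSigma,h]{\uvchi[h]}^2 + \norm[U,h]{\uw[h]}^2
= \norm[\vX,h]{(\uvchi[h],\uw[h])}^2.
\]
The continuity part of~\eqref{eq:vsT.cont} also gives $\norm[\vX,h]{(\uvtau[h],\uv[h])}\lesssim\norm[\vX,h]{(\uvchi[h],\uw[h])}$, and dividing the two estimates produces the inf-sup bound~\eqref{eq:cAh:inf-sup}. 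Well-posedness of~\eqref{eq:mixed.hyb.h'} then follows because $\uvXh$ is finite-dimensional: \eqref{eq:cAh:inf-sup} forces injectivity of the associated linear operator, hence bijectivity by equality of dimensions.

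The only nontrivial step is the algebraic manipulation in the second paragraph: one must use exactly the defining relation of $\vsT$ to transmute $\mathrm{b}_h(\vsh\uw[h],\uw[h])$ into a positive $\mathrm{m}_h$-term before any estimates are made. Once this is done, everything reduces to a standard Young-inequality bookkeeping, and the choice $\xi\ll 1$ that absorbs the cross term while keeping a positive fraction of each diagonal piece is the only quantitatively delicate point.
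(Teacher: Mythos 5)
Your proof is correct, but it takes a genuinely different route from the paper's. The paper proceeds modularly: it first proves an inf-sup condition for $\mathrm{b}_h$ alone, via the explicit lift $\vtau_{v,T}=\GRAD v_T$, $\tau_{v,TF}=h_F^{-1}(v_F-v_T)$ (for which $\mathrm{b}_h(\uvtau[v,h],\uv[h])=\norm[U,h]{\uv[h]}^2$ and $\norm[\vSigma,h]{\uvtau[v,h]}=\norm[U,h]{\uv[h]}$), and then invokes classical saddle-point theory --- coercivity of $\mathrm{m}_h$ on all of $\uvcSigmah$ from (S1) together with the $\mathrm{b}_h$ inf-sup, citing the Boffi--Brezzi--Fortin framework --- to conclude~\eqref{eq:cAh:inf-sup}. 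You instead prove~\eqref{eq:cAh:inf-sup} directly by a T-coercivity-type construction: the test pair $(\uvchi[h]-\xi\,\vsh\uw[h],\uw[h])$, the identity $-\mathrm{b}_T(\vsT\uw[T],\uw[T])=\mathrm{m}_T(\vsT\uw[T],\vsT\uw[T])$ read off from~\eqref{eq:vsT}, and a Young-inequality absorption for small $\xi$. This is legitimate and non-circular, since Lemma~\ref{lem:vsT} is established before the analysis section and its proof does not rely on the present lemma. The two arguments are, however, closely related under the hood: the stability and continuity~\eqref{eq:vsT.cont} of $\vsT$, on which both your cross-term absorption and your bound $\norm[\vX,h]{(\uvtau[h],\uv[h])}\lesssim\norm[\vX,h]{(\uvchi[h],\uw[h])}$ rest, was itself proved in Lemma~\ref{lem:vsT} using precisely the same flux lift that the paper uses for the $\mathrm{b}_h$ inf-sup; in effect you have repackaged the paper's key construction inside the operator $\vsh$. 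What each approach buys: the paper's version is shorter and parameter-free, at the price of an external citation to saddle-point theory; yours is self-contained, exhibits an explicit test function with trackable constants, and exploits the fact that $\mathrm{m}_h$ is coercive on the whole flux space (not merely on the kernel of $\mathrm{b}_h$), which is exactly what makes a one-parameter perturbation of the diagonal test choice suffice. Your concluding step --- inf-sup implies injectivity, hence bijectivity by finite dimensionality --- is also sound, and matches the standard argument the paper delegates to the cited reference.
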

\begin{proof}
  We start by proving the following inf-sup condition for $\mathrm{b}_h$:
  For all $\uv[h]\in\uUhD$,
  \begin{equation}\label{eq:bh:inf-sup}
    \norm[U,h]{\uv[h]}\lesssim
    \sup_{\uvtau[h]\in\uvcSigmah\setminus\{\underline{\vec{0}}_{\vSigma,h}\}}\frac{\mathrm{b}_h(\uvtau[h],\uv[h])}{\norm[\vSigma,h]{\uvtau[h]}}.
  \end{equation}
  Fix an element $\uv[h]\in\uUhD$, and let $\uvtau[v,h]\in\uvcSigmah$ be such that, for all $T\in\Th$, $\vtau_{v,T}=\GRAD v_T$ and $\tau_{v,TF}=h_F^{-1}(v_F - v_T)$.
  Denoting by $\mathsf{S}$ the supremum in~\eqref{eq:bh:inf-sup} from~\eqref{eq:bT'} it is inferred that
  $$
  \norm[U,h]{\uv[h]}^2
  =\mathrm{b}_h(\uvtau[v,h],\uv[h])
  \le\mathsf{S}\norm[\vSigma,h]{\uvtau[v,h]},
  $$
  and~\eqref{eq:bh:inf-sup} readily follows observing that, by the definitions~\eqref{eq:norm.vSigmaT} and~\eqref{eq:normUT} of the local norms, $\norm[\vSigma,T]{\uvtau[v,T]}=\norm[U,T]{\uv[T]}$.
  The inf-sup condition~\eqref{eq:cAh:inf-sup} on $\mathcal{A}_h$ and the well-posedness of problem~\eqref{eq:mixed.hyb.h} are then classical consequences of the $\norm[\vSigma,h]{{\cdot}}$-coercivity of $\mathrm{m}_h$ (itself a consequence of (S1)) and the inf-sup condition~\eqref{eq:bh:inf-sup} on $\mathrm{b}_h$; cf., e.g.,~\cite{Boffi.Brezzi.ea:13}.
\end{proof}
\subsection{Energy error estimate}

We estimate the error defined as the difference between the solution of the mixed hybrid problem~\eqref{eq:mixed.hyb.h} and the projection $(\uvhsigma[h],\uuh)\in\uvcSigmah\times\uUhD$ of the exact solution defined as follows:
$$
\uvhsigma[h]\eqbydef\uvISh\GRADh\cu[h]\quad\forall T\in\Th,\qquad
\uuh\eqbydef\uIUh u,
$$
where $\cu[h]\in\Poly{k+1}(\Th)$ is such that, for all $T\in\Th$, $\cu[T]\eqbydef\restrto{\cu[h]}{T}$ is the local elliptic projection of $u$ satisfying
\begin{equation}\label{eq:ell.proj}
  \text{$\GRAD\cu[T]=\vlproj[\cG,T]{k}\GRAD u$\quad and\quad $(\cu[T]-u,1)_T=0$,}
\end{equation}
while $\uvISh$ is the global flux reduction map on $\uvcSigmah$ whose restriction to every mesh elements $T\in\Th$ coincides with $\uvIST$ defined by~\eqref{eq:uvIST}.
Optimal approximation properties for $\cu[h]$ on admissible mesh sequence are proved in~\cite[Lemma~3]{Di-Pietro.Ern.ea:14} and, in a more general framework, in~\cite{Di-Pietro.Droniou:16}.
\begin{theorem}[Energy error estimate]\label{thm:en.err.est}
  Let $u\in H^1_0(\Omega)$ be the weak solution of problem~\eqref{eq:strong}, and assume the additional regularity $u\in H^{k+2}(\Omega)$.
  Then, it holds
  \begin{equation}\label{eq:en.err.est}
    \norm[\vX,h]{(\uvsigma[h]-\uvhsigma[h],\uu[h]-\uuh)}
    \lesssim h^{k+1}\norm[H^{k+2}(\Omega)]{u}.
  \end{equation}
\end{theorem}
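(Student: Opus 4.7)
\medskip
\noindent\textbf{Proof plan.}
The plan is to combine the inf-sup bound~\eqref{eq:cAh:inf-sup} with a direct consistency analysis of the projected exact solution $(\uvhsigma[h],\uuh)$. Applying~\eqref{eq:cAh:inf-sup} to the discrete error $(\uvsigma[h]-\uvhsigma[h],\uu[h]-\uuh)\in\uvXh$ and subtracting the equation~\eqref{eq:mixed.hyb.h'} satisfied by $(\uvsigma[h],\uu[h])$, the estimate~\eqref{eq:en.err.est} reduces to bounding the consistency error
\[
\mathcal{E}_h(\uvtau[h],\uv[h])\eqbydef(f,v_h)-\mathcal{A}_h((\uvhsigma[h],\uuh),(\uvtau[h],\uv[h]))
\]
by $h^{k+1}\norm[H^{k+2}(\Omega)]{u}\,\norm[\vX,h]{(\uvtau[h],\uv[h])}$ uniformly in $(\uvtau[h],\uv[h])\in\uvXh$.

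The heart of the argument is to rewrite $\mathcal{E}_h$ entirely in terms of residuals of the local elliptic projection $u-\cu[T]$. Element by element, since $\GRAD\cu[T]\in\cGT[k]\subset\cST$ (cf.~\eqref{eq:cST}, using $m\in\{0,k\}$), the polynomial consistency relations~\eqref{eq:cons.FT} and~\eqref{eq:cons.ST} yield $\PT\uvhsigma=\ST\uvhsigma=\GRAD\cu[T]$, while (S2) kills the stabilization contribution $\mathrm{s}_{\vSigma,T}(\uvhsigma,\uvtau)=0$. Taking $w=\cu[T]\in\Poly{k+1}(T)$ in the definition~\eqref{eq:PT} of $\PT$, invoking the definition~\eqref{eq:uIUT} of $\uIUT$, and using $L^2$-orthogonality of $\lproj[T]{l}$ and $\lproj[F]{k}$, one obtains
\[
\mathrm{m}_T(\uvhsigma,\uvtau)+\mathrm{b}_T(\uvtau,\uuh[T])=(\DT\uvtau,u-\cu[T])_T-\sum_{F\in\Fh[T]}(\tau_{TF},u-\cu[T])_F.
\]
Next, the commuting property~\eqref{eq:commut.DT} gives $\DT\uvhsigma=\lproj[T]{l}\LAPL\cu[T]$; after integrating by parts on each $T$ and using single-valuedness of $\GRAD u\cdot\normal$ across interfaces together with $v_F=0$ on $\partial\Omega$ (embedded in $\uUhD$), we find
\[
\mathrm{b}_h(\uvhsigma[h],\uv[h])=-(f,v_h)-\sum_{T\in\Th}\sum_{F\in\Fh[T]}\bigl((\GRAD u-\GRAD\cu[T])\cdot\normal_{TF},\,v_T-v_F\bigr)_F.
\]
Collecting the two identities, $\mathcal{E}_h$ becomes a sum of three residual contributions involving $u-\cu[T]$ on $T$, $u-\cu[T]$ on $F$, and $(\GRAD u-\GRAD\cu[T])\cdot\normal_{TF}$ on $F$.

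Each residual is then bounded by Cauchy--Schwarz together with the optimal approximation estimates for the elliptic projection on admissible mesh sequences from~\cite{Di-Pietro.Ern.ea:14,Di-Pietro.Droniou:16}, namely $\norm[T]{u-\cu[T]}\lesssim h_T^{k+2}\norm[H^{k+2}(T)]{u}$, $\norm[F]{u-\cu[T]}\lesssim h_T^{k+3/2}\norm[H^{k+2}(T)]{u}$, and $\norm[F]{\GRAD u-\GRAD\cu[T]}\lesssim h_T^{k+1/2}\norm[H^{k+2}(T)]{u}$. For the first residual I would use the auxiliary discrete bound $\norm[T]{\DT\uvtau}\lesssim h_T^{-1}\norm[\vSigma,T]{\uvtau}$, obtained by testing~\eqref{eq:DT} with $q=\DT\uvtau$ and invoking local inverse and discrete trace inequalities. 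The face residuals pair directly with the $h_F$-weighted face contributions of $\norm[\vSigma,h]{\uvtau[h]}$ (cf.~\eqref{eq:norm.vSigmaT}) and with the jump part of $\norm[U,h]{\uv[h]}$ (cf.~\eqref{eq:normUT}). Summing with a discrete Cauchy--Schwarz over $\Th$ and over $\Fh$ delivers the required bound in $h^{k+1}\norm[H^{k+2}(\Omega)]{u}\,\norm[\vX,h]{(\uvtau[h],\uv[h])}$.

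The delicate point is the middle step: orchestrating the cancellations so that only elliptic-projection residuals survive. This requires coordinating the polynomial consistency of $\PT$ and $\ST$, the commuting identity for $\DT$, the definitions of the reductions $\uvIST$ and $\uIUT$, and the single-valuedness of the exact normal flux across interfaces. Particular care is warranted in the borderline case $l=k+1$, where $v_T-v_F$ may exceed degree $k$ on $F$ and an extra insertion of $\lproj[F]{k}$ (exploiting $L^2$-orthogonality of the projection of $\GRAD u\cdot\normal_{TF}$) is needed to avoid losing a power of $h$ in the final estimate.
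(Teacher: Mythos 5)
Your proposal is correct and follows essentially the same route as the paper's own proof: the same inf-sup/error-equation reduction to the consistency error $\mathcal{E}_h$, the same grouping of its four addends into the pair $(f,v_h)+\mathrm{b}_h(\uvhsigma[h],\uv[h])$ (treated by elementwise integration by parts and single-valuedness of $\GRAD u\SCAL\normal_{TF}$ across interfaces) and the pair $\mathrm{m}_h(\uvhsigma[h],\uvtau[h])+\mathrm{b}_h(\uvtau[h],\uuh)$ (treated via the polynomial consistency of $\PT$, (S2), the definition~\eqref{eq:PT} of $\PT$ with $w=\cu$, and $L^2$-orthogonality of the projectors), and the same final bounds using the approximation properties of $\cu$ and the inverse inequality $\norm[T]{\DT\uvtau}\lesssim h_T^{-1}\norm[\vSigma,T]{\uvtau}$. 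The only inaccuracy is your closing caveat: no extra insertion of $\lproj[F]{k}$ is needed in the borderline case $l=k+1$, because the face term is bounded by pairing $h_F^{1/2}\norm[F]{\GRAD(u-\cu)}$ with $h_F^{-1/2}\norm[F]{v_T-v_F}$, and the latter is controlled by $\norm[U,T]{\uv[T]}$ by the very definition~\eqref{eq:normUT} of the norm, irrespective of the polynomial degree of $v_T-v_F$, so no power of $h$ is lost.
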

\begin{proof}
  The following error equation descends from~\eqref{eq:mixed.hyb.h'}:
  For all $(\uvtau[h],\uv[h])\in\uvcSigmah\times\uUhD$,
  \begin{equation*}\label{eq:err.eq}
    \mathcal{A}_h((\uvsigma[h]-\uvhsigma[h],\uu[h]-\uuh),(\uvtau[h],\uv[h]))
    = \mathcal{E}_h(\uvtau[h],\uv[h]),
  \end{equation*}
  with consistency error
  \begin{equation}\label{eq:Eh}
    \mathcal{E}_h(\uvtau[h],\uv[h])
    \eqbydef (f, v_h) + \mathrm{b}_h(\uvhsigma[h],\uv[h]) - \mathrm{m}_h(\uvhsigma[h],\uvtau[h]) - \mathrm{b}_h(\uvtau[h],\uuh).
  \end{equation}
  Recalling the inf-sup condition~\eqref{eq:cAh:inf-sup}, we then have that
  \begin{equation}\label{eq:basic.en.est}
    \norm[\vX,h]{(\uvsigma[h]-\uvhsigma[h],\uu[h]-\uuh)}
    \lesssim\sup_{(\uvtau[h],\uv[h])\in\uvXh\setminus\{\underline{\vec{0}}_{\vX,h}\}}
    \frac{\mathcal{E}_h(\uvtau[h],\uv[h])}{\norm[\vX,h]{(\uvtau[h],\uv[h])}}.
  \end{equation}
  To conclude, it suffices to bound $\mathcal{E}_h(\uvtau[h],\uv[h])$.
  Denote by $\term_1,\ldots,\term_4$ the addends in the right-hand side of~\eqref{eq:Eh}.
  Recalling that $f=-\LAPL u$ a.e. in $\Omega$, integrating by parts element-by-element, and using the fact that the normal component of $\GRAD u$ is continuous across all interfaces $F\in\Fhi$ and that $v_F$ vanishes on boundary faces $F\in\Fhb$, we have that
  $$
  \term_1 = \sum_{T\in\Th}\left(
  (\GRAD u, \GRAD v_T) + \sum_{F\in\Fh[T]}(\GRAD u\SCAL\normal_{TF}, v_F - v_T)_F
  \right).
  $$
  Using the commuting property~\eqref{eq:commut.DT} of $\DT$ to infer $\DT\uvhsigma=\LAPL\cu[T]$, and integrating by parts element-by-element, we have that
  $$
  \term_2 = -\sum_{T\in\Th}\left(
  (\GRAD u,\GRAD v_T)_T + \sum_{F\in\Fh[T]}(\GRAD\cu\SCAL\normal_{TF},v_F-v_T)_F
  \right),
  $$
  where we have used the definition~\eqref{eq:ell.proj} of $\cu$ to write $\GRAD u$ instead of $\GRAD\cu$ in the first term.
  The Cauchy--Schwarz inequality yields
  \begin{equation}\label{eq:en.est.T1.T2}
    |\term_1 + \term_2|
    \le\left(
      \sum_{F\in\Fh[T]}h_F\norm[F]{\GRAD(u-\cu)}^2
      \right)^{\frac12}\hspace{-1ex}\times\left(
      \sum_{F\in\Fh[T]}h_F^{-1}\norm[F]{v_F - v_T}^2
    \right)^{\frac12}
    \lesssim h^{k+1}\norm[H^{k+2}(\Omega)]{u}\norm[U,h]{\uv[h]},
  \end{equation}
  where we have used the optimal approximation properties of $\cu$ to conclude.
  
Recalling the definition~\eqref{eq:mT'} of $\mathrm{m}_T$, using the polynomial consistency~\eqref{eq:cons.FT} of $\PT$ together with (S2), and expanding $\PT\uvtau[T]$ according to its definition~\eqref{eq:PT} (with $w=\cu$), it is inferred that
  $$
  \term_3 = -\sum_{T\in\Th}(\GRAD\cu,\PT\uvtau)_T
  = \sum_{T\in\Th}\left(
  (\cu,\DT\uvtau)_T - \sum_{F\in\Fh[T]}(\cu,\tau_{TF})_F
  \right).
  $$
  Recalling~\eqref{eq:bT'} together with the definitions~\eqref{eq:uIUT} of $\uIUT$ and~\eqref{eq:lproj} of $\lproj[T]{l}$ and $\lproj[F]{k}$, we get that
  $$ 
  \term_4 = \sum_{T\in\Th}\left(
  -(u,\DT\uvtau)_T + \sum_{F\in\Fh[T]}(u,\tau_{TF})_F
  \right).
  $$
  Using the Cauchy--Schwarz inequality, we then obtain
  \begin{equation}\label{eq:en.est.T3.T4}
    \begin{aligned}
      |\term_3 + \term_4|
      &\le\left[
      \sum_{T\in\Th}\left(h_T^{-2}\norm[T]{u-\cu}^2 + \hspace{-0.5em}\sum_{F\in\Fh[T]}h_F^{-1}\norm[F]{u-\cu}^2\right)
      \right]^{\frac12}\hspace{-1ex}\times\left[
      \sum_{T\in\Th}\left(h_T^2\norm[T]{\DT\uvtau}^2 + \hspace{-0.5em}\sum_{F\in\Fh[T]}h_F\norm[F]{\tau_{TF}}^2\right)
      \right]^{\frac12}
      \\
      &\lesssim h^{k+1}\norm[H^{k+2}(\Omega)]{u}\norm[\vSigma,T]{\uvtau},
    \end{aligned}
  \end{equation}
  where we have used the optimal approximation properties of $\cu$ and the inverse inequality $\norm[T]{\DT\uvtau}\lesssim h_T^{-1}\norm[\vSigma,T]{\uvtau}$ to pass to the second line.
Combining~\eqref{eq:en.est.T1.T2} with~\eqref{eq:en.est.T3.T4}, we infer the bound
  $$
  |\mathcal{E}_h(\uvtau[h],\uv[h])|\lesssim h^{k+1}\norm[H^{k+2}(\Omega)]{u}\norm[\vX,h]{(\uvtau[h],\uv[h])},
  $$
  which, plugged into~\eqref{eq:basic.en.est}, yields the desired result.
\end{proof}

\subsection{$L^2$-error estimate}

In this section we prove a sharp $L^2$-error estimate on the potential under the following usual elliptic regularity assumption:
For all $g\in L^2(\Omega)$, the unique solution $z\in H_0^1(\Omega)$ of the problem
\begin{equation}\label{eq:ell.reg.aux}
  (\GRAD z,\GRAD v) = (g,v)\qquad\forall v\in H_0^1(\Omega),
\end{equation}
satisfies
\begin{equation}\label{eq:ell.reg}
  \norm[H^2(\Omega)]{z}\le C_\Omega\norm[L^2(\Omega)]{g},
\end{equation}
with real number $C_\Omega>0$ only depending on $\Omega$.
In the proof we will need the following consistency property for the bilinear form $\mathrm{b}_h$.

\begin{proposition}[Consistency of $\mathrm{b}_h$]
  For all $\vchi\in\Hdiv$ such that $\restrto{\vchi}{T}\in\vSigma^+(T)$ for all $T\in\Th$, it holds
  \begin{equation}\label{eq:bh.cons}
    \mathrm{b}_h(\uvISh\vchi,\uv[h]) = (\DIV\vchi,v_h)\qquad\forall\uv[h]\in\uUhD.
  \end{equation}
\end{proposition}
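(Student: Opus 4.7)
The plan is to expand $\mathrm{b}_h$ element by element using its definition~\eqref{eq:bT.bh}, apply the commuting property~\eqref{eq:commut.DT} of $\DT$ and the definition~\eqref{eq:lproj} of the $L^2$-projectors to strip the projectors off $\vchi$, then exploit the $\Hdiv$-regularity of $\vchi$ together with the homogeneous boundary condition encoded in $\uUhD$ to cancel the face contributions.

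In detail, for every $T \in \Th$ and $\uv[T] \in \uUT$, the definition of $\mathrm{b}_T$ together with $\uvtau = \uvIST\vchi$ gives
\[
  \mathrm{b}_T(\uvIST\vchi, \uv[T])
  = (\DT\uvIST\vchi, v_T)_T - \sum_{F \in \Fh[T]} (\lproj[F]{k}(\vchi\SCAL\normal_{TF}), v_F)_F.
\]
Using~\eqref{eq:commut.DT}, the first term equals $(\lproj[T]{l}(\DIV\vchi), v_T)_T$, which in turn equals $(\DIV\vchi, v_T)_T$ because $v_T \in \Poly{l}(T)$ and by the definition of $\lproj[T]{l}$. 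Likewise, since $v_F \in \Poly{k}(F)$, the face terms reduce to $(\vchi\SCAL\normal_{TF}, v_F)_F$. Summing over $T \in \Th$ and using $v_h = v_T$ on $T$ yields
\[
  \mathrm{b}_h(\uvISh\vchi, \uv[h])
  = (\DIV\vchi, v_h)
  - \sum_{T \in \Th} \sum_{F \in \Fh[T]} (\vchi\SCAL\normal_{TF}, v_F)_F.
\]

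It remains to show that the face double sum vanishes, which is the only non-routine step. Reordering the summation over faces rather than elements, each interface $F \in \Fhi$ is shared by exactly two elements $T_1, T_2 \in \Th[F]$ with $\normal_{T_1 F} = -\normal_{T_2 F}$; since $\vchi \in \Hdiv$ has a single-valued normal trace on $F$, the contributions from $T_1$ and $T_2$ cancel against the common face unknown $v_F$. Each boundary face $F \in \Fhb$ contributes nothing because $v_F = 0$ by definition of $\uUhD$ (cf.~\eqref{eq:uUh}). Collecting these cancellations yields the claimed identity~\eqref{eq:bh.cons}. I do not anticipate any serious obstacle: the argument is a direct transcription of the classical integration-by-parts consistency for hybridized mixed methods, with the commuting property of $\DT$ and the $L^2$-orthogonality of the face and element projectors doing all the algebraic work.
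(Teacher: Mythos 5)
Your proposal is correct and follows exactly the route of the paper's own (one-line) proof: expand $\mathrm{b}_h$ via~\eqref{eq:bT.bh}, apply the commuting property~\eqref{eq:commut.DT} and the $L^2$-orthogonality of $\lproj[T]{l}$ and $\lproj[F]{k}$, then cancel the face sum using the single-valued normal trace of $\vchi\in\Hdiv$ on interfaces and $v_F=0$ on boundary faces. The only difference is that you spell out the details the paper leaves implicit; there is nothing to correct.
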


\begin{proof}
  Recall the expression~\eqref{eq:bT.bh} of $\mathrm{b}_h$ and use commuting property~\eqref{eq:commut.DT} for $\DT$ together with the fact that $\vchi$ has continuous normal components across interfaces $F\in\Fhi$ and $v_F=0$ on all $F\in\Fhb$.
\end{proof}

\begin{theorem}[$L^2$-error estimate]\label{thm:l2.err.est}
  Let the assumptions of Theorem~\ref{thm:en.err.est} hold true, and further assume elliptic regularity, $f\in H^{k+\delta}(\Omega)$ with $\delta=1$ if $k\in\{0,1\}$ and $l=0$, $\delta=0$ otherwise.
  Then, it holds
  \begin{equation}\label{eq:L2.err.est}
    \norm{\hu-u_h}\lesssim h^{k+2}\norm[H^{k+2}(\Omega)]{u} + h^{k+2}\norm[H^{k+\delta}(\Omega)]{f}.
  \end{equation}
\end{theorem}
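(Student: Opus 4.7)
The plan is to carry out an Aubin--Nitsche duality argument directly on the mixed hybrid formulation~\eqref{eq:mixed.hyb.h'}. Set $g \eqbydef \hu - u_h \in L^2(\Omega)$ and consider the adjoint problem $-\LAPL z = g$ in $\Omega$ with $z=0$ on $\partial\Omega$, whose weak form is~\eqref{eq:ell.reg.aux}; elliptic regularity~\eqref{eq:ell.reg} gives $z\in H^2(\Omega)$ with $\norm[H^2(\Omega)]{z}\le C_\Omega\norm{g}$. Letting $\vchi \eqbydef \GRAD z \in \Hdiv$, so that $-\DIV\vchi = g$ almost everywhere in $\Omega$, I introduce the discrete interpolates $\uvhchi[h] \eqbydef \uvISh\vchi \in \uvcSigmah$ and $\uhz[h] \eqbydef \uIUh z \in \uUhD$ (the latter lying in $\uUhD$ since $z$ vanishes on $\partial\Omega$).

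The pivot of the argument is the consistency property~\eqref{eq:bh.cons} of $\mathrm{b}_h$ applied to $\vchi$ and tested with $\uv[h] = \uu[h] - \uuh \in \uUhD$, which delivers
\[
  \mathrm{b}_h(\uvhchi[h], \uu[h] - \uuh) = (\DIV\vchi, u_h - \hu) = (g, \hu - u_h) = \norm{g}^2.
\]
Inserting the test pair $(\uvtau[h], \uv[h]) = (\uvhchi[h], \uhz[h])$ into the error equation
\[
  \mathcal{A}_h((\uvsigma[h] - \uvhsigma[h], \uu[h] - \uuh), (\uvhchi[h], \uhz[h])) = \mathcal{E}_h(\uvhchi[h], \uhz[h])
\]
already derived in the proof of Theorem~\ref{thm:en.err.est}, expanding $\mathcal{A}_h$ according to~\eqref{eq:cAh}, and using the identity just displayed, I obtain the fundamental representation
\[
  \norm{g}^2 = \mathcal{E}_h(\uvhchi[h], \uhz[h]) - \mathrm{m}_h(\uvsigma[h] - \uvhsigma[h], \uvhchi[h]) + \mathrm{b}_h(\uvsigma[h] - \uvhsigma[h], \uhz[h]).
\]

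It then remains to bound each of the three addends on the right by $h^{k+2}\bigl(\norm[H^{k+2}(\Omega)]{u} + \norm[H^{k+\delta}(\Omega)]{f}\bigr)\norm{g}$, after which dividing through by $\norm{g}$ and invoking $\norm[H^2(\Omega)]{z}\lesssim\norm{g}$ will yield~\eqref{eq:L2.err.est}. For $\mathcal{E}_h(\uvhchi[h], \uhz[h])$ I would revisit the four-term splitting $\term_1,\ldots,\term_4$ of the proof of Theorem~\ref{thm:en.err.est}: since the test functions now interpolate the \emph{smooth} dual solution $z\in H^2(\Omega)$, classical polynomial approximation applied to $z - \lproj[T]{l}z$, to $v_F - v_T$, and to $\vchi - \uvhchi[h]$ buys an extra factor of $h$ on the test-side norms, upgrading the $h^{k+1}$ bound of Theorem~\ref{thm:en.err.est} to the desired $h^{k+2}$. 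The cross terms $\mathrm{m}_h(\uvsigma[h] - \uvhsigma[h], \uvhchi[h])$ and $\mathrm{b}_h(\uvsigma[h] - \uvhsigma[h], \uhz[h])$ are treated by inserting intermediate projections (for instance replacing $\uvhchi[h]$ by $\uvhsigma[h]$ and $\uhz[h]$ by $\uuh$ up to higher-order remainders) and exploiting the polynomial consistency~(S2) of $\mathrm{s}_{\vSigma,T}$, the commuting properties~\eqref{eq:commut.DT} and~\eqref{eq:cons.ST}, and the mixed-hybrid equations themselves to absorb the $O(1)$ parts of the dual interpolates, leaving only $O(h^2\norm[H^2(\Omega)]{z})$ remainders that combine with the energy estimate of Theorem~\ref{thm:en.err.est} to produce the required $h^{k+2}$.

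The main obstacle is precisely this extraction of the extra factor of $h$ from the two cross terms: a naive Cauchy--Schwarz combined with the energy bound $\norm[\vSigma,h]{\uvsigma[h] - \uvhsigma[h]} \lesssim h^{k+1}\norm[H^{k+2}(\Omega)]{u}$ only yields $h^{k+1}$ (because $\uvhchi[h]$ and $\uhz[h]$ have $O(1)$ norms), so an adjoint-consistency step that is genuinely distinct from those employed in the energy analysis is needed. A secondary delicate point accounts for the $f$-dependent contribution in~\eqref{eq:L2.err.est}: the sub-term $(f, \lproj[T]{l}z - z)_T$ coming from $(f, v_h)$ inside $\mathcal{E}_h$ is of size $h^{l+1}\norm[T]{f}\,\norm[H^{l+1}(T)]{z}$ by the standard approximation of the $L^2$-projection, which is $h^{k+2}$ as soon as $l \ge k+1$; in the exceptional configuration $l=0$ with $k\in\{0,1\}$, the missing orders must be recovered by additionally inserting $\lproj[T]{k}f$ in the integrand and invoking orthogonality, at the price of the extra regularity $f\in H^{k+1}(\Omega)$, which explains the value $\delta=1$ in the statement (versus $\delta=0$ in all remaining cases).
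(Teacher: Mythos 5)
Your duality setup coincides with the paper's starting point (same dual problem, elliptic regularity, and the consistency property~\eqref{eq:bh.cons} of $\mathrm{b}_h$), and your ``fundamental representation'' of $\norm{g}^2$ is a correct algebraic identity. The genuine gap is that the heart of the proof --- producing the $O(h^{k+2})$ bounds --- is never carried out, and the hints you give for doing so would fail. First, the three addends in your representation cannot be bounded \emph{individually} at order $h^{k+2}$: using~\eqref{eq:mixed.hyb.h:2} with $\uv[h]=\uhz$ one finds $\mathrm{b}_h(\uvsigma[h]-\uvhsigma[h],\uhz)=-(f,\hz)-\mathrm{b}_h(\uvhsigma[h],\uhz)$, which is exactly the negative of the first two addends of $\mathcal{E}_h(\uvhchi[h],\uhz)$; these pieces cancel \emph{across} your terms, and (rewriting them as in the energy proof) each equals $\sum_{T\in\Th}\sum_{F\in\Fh[T]}(\GRAD(u-\cu)\SCAL\normal_{TF},z-\lproj[T]{l}z)_F$ up to sign, which for $l=0$ is in general only $O(h^{k+1})$ because $z-\lproj[T]{0}z=O(h\norm[H^1(\Omega)]{z})$ --- a loss that no regularity of $f$ repairs, so a term-by-term strategy is structurally doomed in that case and your attribution of $\delta=1$ solely to the $f$-term is incorrect for your decomposition. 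Second, your proposed substitution --- replacing $\uvhchi[h]$ by $\uvhsigma[h]$ and $\uhz$ by $\uuh$ ``up to higher-order remainders'' --- confuses dual-side and primal-side objects: $\uvhchi[h]$ and $\uhz$ interpolate quantities attached to $z$, while $\uvhsigma[h]$ and $\uuh$ interpolate quantities attached to $u$, so these differences are $O(1)$, not remainders. Third, the claim that $\mathcal{E}_h(\uvhchi[h],\uhz)$ gains a factor $h$ ``by classical polynomial approximation'' is unsubstantiated (plugging the dual interpolates into the energy-norm consistency bound gives only $h^{k+1}$, since their $\norm[\vX,h]{{\cdot}}$-norm is $O(1)$), and your bound $h^{l+1}\norm[T]{f}\norm[H^{l+1}(T)]{z}$ invokes $H^{l+1}$-regularity of $z$, which elliptic regularity does not supply beyond $H^{2}(\Omega)$.

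The missing idea --- and the paper's actual mechanism --- is the mixed-to-primal potential-to-flux operator $\vsh$ of Section~\ref{sec:m->p}: one inserts $\pm\vsh\uhz$ (not $\pm\uvhsigma[h]$), and then (i) the commuting property~\eqref{eq:vsT.commuting} together with the stability~\eqref{eq:vsT.cont} gives the superconvergence estimate $\norm[\vSigma,h]{\uvhchi[h]-\vsh\uhz}\lesssim h\norm[H^2(\Omega)]{z}$, so its pairing with $\uvsigma[h]-\uvhsigma[h]$ is $O(h^{k+2})$ by Cauchy--Schwarz, (S1), and Theorem~\ref{thm:en.err.est}; (ii) the remaining term $\mathrm{m}_h(\uvhchi[h]-\vsh\uhz,\uvhsigma[h])$ is transformed --- via (S2), the identity $(\ST\circ\vsT)\uhz[T]=\GT\uhz[T]=\GRAD\cz[T]$ from~\eqref{eq:char.GT} and~\eqref{eq:commut.GT}, the definition~\eqref{eq:PT} of $\PT$, and the single-valuedness of $\lproj[F]{k}(\GRAD z\SCAL\normal_{TF})-\GRAD z\SCAL\normal_{TF}$ --- into $\sum_{T\in\Th}\sum_{F\in\Fh[T]}(\lproj[F]{k}(\GRAD z\SCAL\normal_{TF})-\GRAD z\SCAL\normal_{TF},\cu-u)_F$, where \emph{both} factors are small independently of $l$; and (iii) as a consequence, every pairing of $z$ against $\lproj[T]{l}$ is confined to the single term $(\lproj[h]{l}f-f,z-\lproj[h]{l}z)$, which is the sole source of the condition on $\delta$. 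To complete your argument you would in effect have to regroup your three terms into the paper's decomposition, i.e., reconstruct precisely this machinery, which is absent from the proposal.
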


\begin{proof}
  Let $z$ solve~\eqref{eq:ell.reg.aux} with $g=u_h-\hu$ and set, for the sake of brevity,
  $$
  \uvhchi\eqbydef\uvISh\GRAD z,\qquad
  \uhz\eqbydef\uIUh z.
  $$
  Then, we have
  \begin{equation}\label{eq:L2.err.est:1}
    \norm{\hu-u_h}^2
    = (u-u_h,\LAPL z)
    = -(f,z) - \mathrm{b}_h(\uvhchi,\uu[h]),
  \end{equation}
  where for the first addend we have integrated by parts twice and used the fact that $-\LAPL u = f$, while for the second addend we have used the consistency property~\eqref{eq:bh.cons} of $\mathrm{b}_h$ with $\vchi=\GRAD z$ and $\uv[h]=\uu[h]$.
  Using~\eqref{eq:mixed.hyb.h:1} we get, denoting by $\vsh$ the global mixed-to-primal potential-to-flux operator whose restriction to every mesh element $T\in\Th$ coincides with $\vsT$ defined by~\eqref{eq:vsT},
  \begin{equation}\label{eq:L2.err.est:2}
    \begin{aligned}
      - \mathrm{b}_h(\uvhchi,\uu[h])
      &= \mathrm{m}_h(\uvhchi,\uvsigma[h])
      \\
      &= \mathrm{m}_h(\uvhchi - \vsh\uhz,\uvsigma[h]) + \mathrm{a}_h(\uhz,\uu[h])
      \\
      &= \mathrm{m}_h(\uvhchi - \vsh\uhz,\uvsigma[h] - \uvhsigma[h]) + \mathrm{m}_h(\uvhchi - \vsh\uhz,\uvhsigma[h]) + (f, \hz),
    \end{aligned}
  \end{equation}
  where we have inserted $\pm\vsh\uhz$ and used the fact that $\uvsigma=\vsh\uu[h]$ together with the definition~\eqref{eq:hybrid.ah} of the primal hybrid bilinear form $\mathrm{a}_h$ to pass to the second line, and we have inserted $\pm\uvhsigma[h]$ and used~\eqref{eq:hybrid.h:2} (with $\uv=\uhz$) to conclude.
  Plugging~\eqref{eq:L2.err.est:2} into~\eqref{eq:L2.err.est:1}, and observing that $(f, \hz) = (\lproj[h]{l}f,z)$ with $\lproj[h]{l}$ denoting the $L^2$-orthogonal projector on $\Uh$ (cf.~\eqref{eq:Uh}), we arrive at
  \begin{equation}\label{eq:L2.err.est:basic.est}
    \norm{\hu-u_h}^2
    = (\lproj[h]{l}f-f,z-\lproj[h]{l}z) + \mathrm{m}_h(\uvhchi - \vsh\uhz,\uvsigma[h] - \uvhsigma[h]) + \mathrm{m}_h(\uvhchi - \vsh\uhz,\uvhsigma[h]).
  \end{equation}
  Denote by $\term_1,\term_2,\term_3$ the terms in the right-hand side of~\eqref{eq:L2.err.est:basic.est}.
  For $\term_1$, if $k\in\{0,1\}$ and $l=0$, we have
  \begin{equation}\label{eq:L2.err.est:T1:1}
    |\term_1|
    \le\norm{\lproj[h]{l}f-f}\norm{z-\lproj[h]{l}z}       
    \lesssim h^2\norm[H^1(\Omega)]{f}\norm[H^1(\Omega)]{z},    
  \end{equation}
  while, in all the other cases,
  \begin{equation}\label{eq:L2.err.est:T1:2}
    |\term_1|
    \le\norm{\lproj[h]{l}f-f}\norm{z-\lproj[h]{l}z}       
    \lesssim h^{k+2}\norm[H^k(\Omega)]{f}\norm[H^2(\Omega)]{z}.
  \end{equation}

  For $\term_2$, the Cauchy--Schwarz inequality followed by (S1) and the energy error estimate~\eqref{eq:en.err.est} yields
  \begin{equation}\label{eq:L2.err.est:T2}
    |\term_2|\lesssim\norm[\vSigma,h]{\uvhchi - \vsh\uhz}\norm[\vSigma,h]{\uvsigma[h] - \uvhsigma[h]}
    \lesssim h^{k+2}\norm[H^2(\Omega)]{z}\norm[H^{k+2}(\Omega)]{u}.
  \end{equation}
  To estimate the quantity $\norm[\vSigma,h]{\uvhchi - \vsh\uhz}$ in~\eqref{eq:L2.err.est:T2}, let $\cz\in\Poly{k+1}(\Th)$ be the broken elliptic projection such that $\cz[T]\eqbydef\restrto{\cz[h]}{T}$ is defined as in~\eqref{eq:ell.proj} with $u$ replaced by $z$, observe that $\uvISh\GRADh\cz=\vsh\uIUh\cz$ by~\eqref{eq:vsT.commuting}, and use~\eqref{eq:vsT.cont} to infer
  $$
  \begin{aligned}
    \norm[\vSigma,h]{\uvhchi - \vsh\uhz}
    &\le\norm[\vSigma,h]{\uvISh(\GRAD z - \GRADh\cz)} + \norm[\vSigma,h]{\vsh\uIUh(z-\cz)}
    \\
    &\lesssim\norm[\vSigma,h]{\uvIST(\GRAD z - \GRADh\cz)} + \norm[U,h]{\uIUh(z-\cz)}
    \lesssim h\norm[H^2(\Omega)]{z},
  \end{aligned}
  $$
  where the conclusion follows from the stability of the $L^2$-projector and the optimal approximation properties of $\cz$.

  For $\term_3$, recalling the definitions~\eqref{eq:mh} of $\mathrm{m}_h$,~\eqref{eq:mT} of $\mathrm{m}_T$, and (S2), we have
  $$
  \begin{aligned}
    \term_3
    &= \sum_{T\in\Th}(\ST(\uvhchi-\vsT\uhz[T]),\ST\uvhsigma)_T
    \\
    &= \sum_{T\in\Th}(\PT\uvhchi-\GRAD\cz[T],\GRAD\cu)_T
    \\
    &= \sum_{T\in\Th}\left(
    (\GRAD(z-\cz[T]),\GRAD\cu[T])_T + \sum_{F\in\Fh[T]}(\lproj[F]{k}(\GRAD z\SCAL\normal_{TF})-\GRAD z\SCAL\normal_{TF},\cu[T])_F
    \right)
    \\
    &= \sum_{T\in\Th}\sum_{F\in\Fh[T]}(\lproj[F]{k}(\GRAD z\SCAL\normal_{TF})-\GRAD z\SCAL\normal_{TF},\cu[T]-u)_F,
  \end{aligned}
  $$
  where we have used the definition~\eqref{eq:ST} of $\ST$ together with the orthogonal decomposition~\eqref{eq:decomp.PTs} and the fact that $(\ST\circ\vsT)\uhz[T]=\GT\uhz[T]=\GRAD\cz[T]$ (cf.~\eqref{eq:char.GT} and~\eqref{eq:commut.GT}) to pass to the second line, the definition~\eqref{eq:PT} of $\PT$ (with $\uvtau=\uvhchi$ and $w=\cu[T]$) together with the fact that $\DT\uvhchi=\LAPL z$ and an integration by parts to pass to the third line, and concluded in the fourth line using the fact that $\cz[T]$ is a local elliptic projection to cancel the first term together with the fact that the quantity $(\lproj[F]{k}(\GRAD z\SCAL\normal_{TF})-\GRAD z\SCAL\normal_{TF})$ is single-valued on every interface $F\in\Fhi$ and $u=0$ on all $F\in\Fhb$ to insert $u$ into the second term.
  
  Using the Cauchy--Schwarz inequality and the optimal approximation properties of $\lproj[F]{k}$ and $\cu[T]$, we conclude
  \begin{equation}\label{eq:L2.err.est:T3}
    |\term_3|\lesssim h^{k+2}\norm[H^{k+2}(\Omega)]{u}\norm[H^2(\Omega)]{z}.
  \end{equation}
  Using~\eqref{eq:L2.err.est:T1:1}--\eqref{eq:L2.err.est:T3} to estimate the right-hand side of~\eqref{eq:L2.err.est:basic.est} followed by the elliptic regularity~\eqref{eq:ell.reg} to bound $\norm[H^2(\Omega)]{z}\lesssim\norm{\hu-u_h}$, the desired result follows.
\end{proof}


\begin{small}
  \bibliographystyle{plain}
  \bibliography{umho}

\begin{thebibliography}{10}

\bibitem{Aghili.Boyaval.ea:15}
J.~Aghili, S.~Boyaval, and D.~A. Di~Pietro.
\newblock Hybridization of mixed high-order methods on general meshes and
  application to the {Stokes} equations.
\newblock {\em Comput. Meth. Appl. Math.}, 15(2):111--134, 2015.

\bibitem{Antonietti.Giani.ea:13}
P.~F. Antonietti, S.~Giani, and P.~Houston.
\newblock $hp$-version composite discontinuous {G}alerkin methods for elliptic
  problems on complicated domains.
\newblock {\em SIAM J. Sci. Comput.}, 35(3):A1417--A1439, 2013.

\bibitem{Araya.Harder.ea:13}
R.~Araya, C.~Harder, D.~Paredes, and F.~Valentin.
\newblock Multiscale hybrid-mixed method.
\newblock {\em SIAM J. Numer. Anal.}, 51(6):3505--3531, 2013.

\bibitem{Arbogast.Chen:95}
T.~Arbogast and Z.~Chen.
\newblock On the implementation of mixed methods as nonconforming methods for
  second-order elliptic problems.
\newblock {\em Math. Comp.}, 64:943--972, 1995.

\bibitem{Arnold.Brezzi:85}
D.~N. Arnold and F.~Brezzi.
\newblock Mixed and nonconforming finite element methods: implementation,
  postprocessing and error estimates.
\newblock {\em RAIRO Mod\'el. Math. Anal. Num.}, 19(4):7--32, 1985.

\bibitem{Ayuso-de-Dios.Lipnikov.ea:15}
B.~Ayuso~de Dios, K.~Lipnikov, and G.~Manzini.
\newblock The nonconforming virtual element method.
\newblock {\em ESAIM: Math. Model Numer. Anal. (M2AN)}, 50(3):879--904, 2016.

\bibitem{Bahriawati.Carstensen:05}
C.~Bahriawati and C.~Carstensen.
\newblock Three {Matlab} implementations of the lowest-order {Raviart--Thomas
  MFEM} with a posteriori error control.
\newblock {\em Comput. Meth. Appl. Math.}, 5(4):333--361, 2005.

\bibitem{Bassi.Botti.ea:12}
F.~Bassi, L.~Botti, A.~Colombo, D.~A. Di~Pietro, and P.~Tesini.
\newblock On the flexibility of agglomeration based physical space
  discontinuous {Galerkin} discretizations.
\newblock {\em J. Comput. Phys.}, 231(1):45--65, 2012.

\bibitem{Beirao-da-Veiga.Brezzi.ea:13}
L.~Beir\~{a}o~da Veiga, F.~Brezzi, A.~Cangiani, G.~Manzini, L.~D. Marini, and
  A.~Russo.
\newblock Basic principles of virtual element methods.
\newblock {\em Math. Models Methods Appl. Sci. (M3AS)}, 199(23):199--214, 2013.

\bibitem{Beirao-da-Veiga.Brezzi.ea:13*1}
L.~Beir\~{a}o~da Veiga, F.~Brezzi, and L.~D. Marini.
\newblock Virtual elements for linear elasticity problems.
\newblock {\em SIAM J. Numer. Anal.}, 2(51):794--812, 2013.

\bibitem{Beirao-da-Veiga.Brezzi.ea:15}
L.~Beir\~{a}o~da Veiga, F.~Brezzi, L.~D. Marini, and A.~Russo.
\newblock {$H(\mathrm{div})$} and {$H(\mathrm{curl})$}-conforming {VEM}.
\newblock {\em Numer. Math.}, 133:303--332, 2016.

\bibitem{Beirao-da-Veiga.Brezzi.ea:16}
L.~Beir\~{a}o~da Veiga, F.~Brezzi, L.~D. Marini, and A.~Russo.
\newblock Mixed virtual element methods for general second order elliptic
  problems on polygonal meshes.
\newblock {\em ESAIM: Math. Model. Numer. Anal. (M2AN)}, 50(3):727--747, 2016.

\bibitem{Beirao-da-Veiga.Lipnikov.ea:14}
L.~Beir\~{a}o~da Veiga, K.~Lipnikov, and G.~Manzini.
\newblock {\em The Mimetic Finite Difference Method for Elliptic Problems},
  volume~11 of {\em Modeling, Simulation and Applications}.
\newblock Springer, 2014.

\bibitem{Boffi.Brezzi.ea:13}
D.~Boffi, F.~Brezzi, and M.~Fortin.
\newblock {\em Mixed finite element methods and applications}, volume~44 of
  {\em Springer Series in Computational Mathematics}.
\newblock Springer, Heidelberg, 2013.

\bibitem{Bonelle.Ern:14}
J.~Bonelle and A.~Ern.
\newblock Analysis of compatible discrete operator schemes for elliptic
  problems on polyhedral meshes.
\newblock {\em ESAIM: Math. Model. Numer. Anal. (M2AN)}, 48:553--581, 2014.

\bibitem{Brezzi.Buffa.ea:09}
F.~Brezzi, A.~Buffa, and K.~Lipnikov.
\newblock Mimetic finite difference for elliptic problem.
\newblock {\em ESAIM: Math. Model. Numer. Anal. (M2AN)}, 43:277--295, 2009.

\bibitem{Brezzi.Falk.ea:14}
F.~Brezzi, R.~S. Falk, and L.~D. Marini.
\newblock Basic principles of mixed virtual element methods.
\newblock {\em ESAIM Math. Model. Numer. Anal. (M2AN)}, 48(4):1227--1240, 2014.

\bibitem{Brezzi.Lipnikov.ea:05}
F.~Brezzi, K.~Lipnikov, and M.~Shashkov.
\newblock Convergence of the mimetic finite difference method for diffusion
  problems on polyhedral meshes.
\newblock {\em SIAM J. Numer. Anal.}, 43(5):1872--1896, 2005.

\bibitem{Cangiani.Georgoulis.ea:14}
A.~Cangiani, E.~H. Georgoulis, and P.~Houston.
\newblock {$hp$}-version discontinuous {G}alerkin methods on polygonal and
  polyhedral meshes.
\newblock {\em Math. Models Methods Appl. Sci.}, 24(10):2009--2041, 2014.

\bibitem{Castillo.Cockburn.ea:00}
P.~Castillo, B.~Cockburn, I.~Perugia, and D.~Sch\"{o}tzau.
\newblock An a priori error analysis of the local discontinuous {Galerkin}
  method for elliptic problems.
\newblock {\em SIAM J. Numer. Anal.}, 38:1676--1706, 2000.

\bibitem{Chen:96}
Z.~Chen.
\newblock Equivalence between and multigrid algorithms for nonconforming and
  mixed methods for second-order elliptic problems.
\newblock {\em East-West J. Numer. Math.}, 4:1--33, 1996.

\bibitem{Cockburn.Di-Pietro.ea:15}
B.~Cockburn, D.~A. Di~Pietro, and A.~Ern.
\newblock Bridging the {Hybrid High-Order} and {Hybridizable Discontinuous
  Galerkin} methods.
\newblock {\em ESAIM: Math. Model. Numer. Anal. (M2AN)}, 50(3):635--650, 2016.

\bibitem{Cockburn.Gopalakrishnan.ea:09}
B.~Cockburn, J.~Gopalakrishnan, and R.~Lazarov.
\newblock Unified hybridization of discontinuous {G}alerkin, mixed, and
  continuous {G}alerkin methods for second order elliptic problems.
\newblock {\em SIAM J. Numer. Anal.}, 47(2):1319--1365, 2009.

\bibitem{Codecasa.Specogna.ea:10}
L.~Codecasa, R.~Specogna, and F.~Trevisan.
\newblock A new set of basis functions for the discrete geometric approach.
\newblock {\em J. Comput. Phys.}, 19(299):7401--7410, 2010.

\bibitem{Crouzeix.Raviart:73}
M.~Crouzeix and P.-A. Raviart.
\newblock Conforming and nonconforming finite element methods for solving the
  stationary {S}tokes equations.
\newblock {\em RAIRO Mod\'el. Math. Anal. Num.}, 7(3):33--75, 1973.

\bibitem{Di-Pietro:12}
D.~A. Di~Pietro.
\newblock Cell centered {Galerkin} methods for diffusive problems.
\newblock {\em ESAIM: Math. Model. Numer. Anal. (M2AN)}, 46(1):111--144, 2012.

\bibitem{Di-Pietro:13}
D.~A. Di~Pietro.
\newblock On the conservativity of cell centered {Galerkin} methods.
\newblock {\em C. R. Acad. Sci Paris, Ser. I}, 351:155--159, 2013.

\bibitem{Di-Pietro.Droniou:15}
D.~A. Di~Pietro and J.~Droniou.
\newblock A {Hybrid High-Order} method for {Leray--Lions} elliptic equations on
  general meshes.
\newblock {\em Math. Comp.}
\newblock Accepted for publication.
  Preprint~\href{http://arxiv.org/abs/1508.01918}{arXiv:1508.01918} [math.NA].

\bibitem{Di-Pietro.Droniou:16}
D.~A. Di~Pietro and J.~Droniou.
\newblock {$W^{s,p}$}-approximation properties of elliptic projectors on
  polynomial spaces, with application to the error analysis of a {Hybrid
  High-Order} discretisation of {Leray--Lions} problems.
\newblock 2016.
\newblock Submitted.
  Preprint~\href{http://arxiv.org/abs/1606.02832}{arXiv:1606.02832} [math.NA].

\bibitem{Di-Pietro.Droniou.ea:15}
D.~A. Di~Pietro, J.~Droniou, and A.~Ern.
\newblock A discontinuous-skeletal method for advection-diffusion-reaction on
  general meshes.
\newblock {\em SIAM J. Numer. Anal.}, 53(5):2135--2157, 2015.

\bibitem{Di-Pietro.Ern:12}
D.~A. Di~Pietro and A.~Ern.
\newblock {\em Mathematical aspects of discontinuous {G}alerkin methods},
  volume~69 of {\em Math\'ematiques \& Applications}.
\newblock Springer-Verlag, Berlin, 2012.

\bibitem{Di-Pietro.Ern:15*1}
D.~A. Di~Pietro and A.~Ern.
\newblock A hybrid high-order locking-free method for linear elasticity on
  general meshes.
\newblock {\em Comput. Meth. Appl. Mech. Engrg.}, 283:1--21, 2015.

\bibitem{Di-Pietro.Ern:16}
D.~A. Di~Pietro and A.~Ern.
\newblock Arbitrary-order mixed methods for heterogeneous anisotropic diffusion
  on general meshes.
\newblock {\em IMA J. Numer. Anal.}, 2016.
\newblock Published online.
  DOI~\href{http://dx.doi.org/10.1093/imanum/drw003}{10.1093/imanum/drw003}.

\bibitem{Di-Pietro.Ern.ea:14}
D.~A. Di~Pietro, A.~Ern, and S.~Lemaire.
\newblock An arbitrary-order and compact-stencil discretization of diffusion on
  general meshes based on local reconstruction operators.
\newblock {\em Comput. Meth. Appl. Math.}, 14(4):461--472, 2014.

\bibitem{Droniou.Eymard:06}
J.~Droniou and R.~Eymard.
\newblock A mixed finite volume scheme for anisotropic diffusion problems on
  any grid.
\newblock {\em Numer. Math.}, 105:35--71, 2006.

\bibitem{Droniou.Eymard.ea:10}
J.~Droniou, R.~Eymard, T.~Gallou\"{e}t, and R.~Herbin.
\newblock A unified approach to mimetic finite difference, hybrid finite volume
  and mixed finite volume methods.
\newblock {\em Math. Models Methods Appl. Sci. (M3AS)}, 20(2):1--31, 2010.

\bibitem{Droniou.Eymard.ea:13}
J.~Droniou, R.~Eymard, T.~Gallouet, and R.~Herbin.
\newblock Gradient schemes: a generic framework for the discretisation of
  linear, nonlinear and nonlocal elliptic and parabolic equations.
\newblock {\em Math. Models Methods Appl. Sci. (M3AS)}, 23(13):2395--2432,
  2013.

\bibitem{Dupont.Scott:80}
T.~Dupont and R.~Scott.
\newblock Polynomial approximation of functions in {S}obolev spaces.
\newblock {\em Math. Comp.}, 34(150):441--463, 1980.

\bibitem{Eymard.Gallouet.ea:10}
R.~Eymard, T.~Gallou{\"e}t, and R.~Herbin.
\newblock Discretization of heterogeneous and anisotropic diffusion problems on
  general nonconforming meshes. {SUSHI}: a scheme using stabilization and
  hybrid interfaces.
\newblock {\em IMA J. Numer. Anal.}, 30(4):1009--1043, 2010.

\bibitem{Eymard.Guichard.ea:12}
R.~Eymard, C.~Guichard, and R.~Herbin.
\newblock Small-stencil 3{D} schemes for diffusive flows in porous media.
\newblock {\em ESAIM Math. Model. Numer. Anal.}, 46(2):265--290, 2012.

\bibitem{Lehrenfeld:10}
C.~Lehrenfeld.
\newblock {\em Hybrid Discontinuous Galerkin methods for solving incompressible
  flow problems}.
\newblock PhD thesis, Rheinisch-Westf\"alischen Technischen Hochschule Aachen,
  2010.

\bibitem{Lipnikov.Manzini:14}
K.~Lipnikov and G.~Manzini.
\newblock A high-order mimetic method on unstructured polyhedral meshes for the
  diffusion equation.
\newblock {\em J. Comput. Phys.}, 272:360--385, 2014.

\bibitem{Marini:85}
L.D. Marini.
\newblock An inexpensive method for the evaluation of the solution of the
  lowest order {Raviart--Thomas} mixed method.
\newblock {\em SIAM J. Numer. Anal.}, 22:493--496, 1985.

\bibitem{Raviart.Thomas:77}
P.~A. Raviart and J.~M. Thomas.
\newblock A mixed finite element method for 2nd order elliptic problems.
\newblock In I.~Galligani and E.~Magenes, editors, {\em Mathematical Aspects of
  the Finite Element Method}. Springer, New York, 1977.

\bibitem{Tonti:75}
E.~Tonti.
\newblock {\em On the formal structure of physical theories}.
\newblock Istituto di Matematica del Politecnico di Milano, 1975.

\bibitem{Vohralik.Wohlmuth:13}
M.~Vohral{\'{\i}}k and B.~I. Wohlmuth.
\newblock Mixed finite element methods: implementation with one unknown per
  element, local flux expressions, positivity, polygonal meshes, and relations
  to other methods.
\newblock {\em Math. Models Methods Appl. Sci. (M3AS)}, 23(5):803--838, 2013.

\end{thebibliography}
\end{small}

\end{document}